\newtheorem{theorem}{Theorem}[section]
\newtheorem{lemma}{Lemma}[section]
\newtheorem{definition}{Definition}[section]
\newtheorem{corollary}{Corollary}
\newcommand{\lpo}{ L\!\prescript{p}{}(\Omega)}
\newcommand{\lpr}{ L\!\prescript{p}{}(\mathbb{R}^N)}
\newcommand{\lir}{ L\!\prescript{\infty}{}(\mathbb{R}^N)}
\newcommand{\lps}{L\!\prescript{p_s^*}{}(\mathbb{R}^N)}
\newcommand{\lpsn}{L\!\prescript{\nicefrac{N}{sp}}{}(\mathbb{R}^N)}
\newcommand{\wspo}{W\!\prescript{s,p}{}(\Omega)}
\newcommand{\twspo}{\widetilde{W}\!\prescript{s,p}{}(\Omega)}
\newcommand{\wspr}{W\!\prescript{s,p}{}(\mathbb{R}^N)}
\newcommand{\wsprc}{{\mathcal{W}}\!\prescript{s,p}{}(\mathbb{R}^N)}
\newcommand{\cir}{C\!\prescript{\infty}{}(\mathbb{R}^N)}
\newcommand{\cio}{C\!\prescript{\infty}{}(\Omega)}
\newcommand{\ccir}{C_0\!\!\!\prescript{\infty}{}(\mathbb{R}^N)}
\newcommand{\cchi}{\protect\raisebox{2pt}{$\chi$}}
\title[Spectrum of the fractional $p-$Laplacian]{
Spectrum of the fractional $p-$Laplacian in $\mathbb{R}^N$
and decay estimate for positive solutions of a Schr\"odinger
equation
}
\author[L. M. Del Pezzo]{Leandro M. Del Pezzo}
	\address{Leandro M. Del Pezzo \hfill\break\indent
		CONICET -- UTDT \hfill\break\indent
		Departamento de Matem\'aticas y 
		Estad\'istica
		\hfill\break\indent Av. Figueroa Alcorta 7350 (C1428BCW)
		\hfill\break\indent Buenos Aires, ARGENTINA. }
	\email{ldelpezzo@utdt.edu}
	\urladdr{http://cms.dm.uba.ar/Members/ldpezzo/}
\author[A. Quaas]{Alexander Quaas}
	\address{A. Quaas
        \hfill\break\indent Departamento de Matem\'atica, 
        \hfill\break\indent Universidad T\'ecnica Federico Santa Mar\'ia 
        \hfill\break\indent Casilla V-110, Avda. 
        \hfill\break\indent Espa\~na, 1680 -- Valpara\'iso, CHILE.}
    \email{alexander.quaas@usm.cl}
\begin{document}

\begin{abstract}
    In this paper, we prove the existence of unbounded sequence of
    eigenvalues for the fractional $p-$Laplacian with weight in
    $\mathbb{R}^N.$ We also show a nonexistence result when the weight 
    has positive integral. 
    
    In addition, we show some qualitative properties of the
    first eigenfunction including a sharp decay estimate. Finally, 
    we extend the decay result to the positive solutions of a
    Schr\"odinger type equation.

\end{abstract}
\maketitle

\section{Introduction and main results}

	In this paper, we study the following eigenvalue problem  
	 \begin{equation}\label{eq:ep}
	            (-\Delta_p)^s u(x)=\lambda g(x)|u(x)|^{p-2}u(x)
	                    \quad \text{ in }\mathbb{R}^N,
    \end{equation}
	where $0<s<1,$  $p>1,$ and $g$  is a weight function satisfying 
	some conditions to be specified later.
	Here  $(-\Delta_p)^s$ denotes the fractional $p-$Laplacian 	
	operator, that is
	\[
		(-\Delta_p)^s u(x)\coloneqq2\mbox{ P.V.}\int_{\mathbb{R}^N}
		\frac{|u(x)-u(y)|^{p-2}(u(x)-u(y))}{|x-y|^{N+sp}}\, dy,
	\]
	where P.V. is a commonly used abbreviation for 
	“in the principal value sense”.
	
	\medskip

	Before we describe our principal results, we will give some motivations. 
	Nonlocal equation of $p$-Laplace type where introduce in 
	\cite{MR2471902,BCF,Caffarelli2012,IN}.
	Fractional Sobolev spaces semi-norms 
	(see for an introduction to the topic in \cite{DNPV} and below)  
	are  natural in the weak form and\ or functional associated 
	with the operator  $(-\Delta_p)^s$, 
	therefore eigenvalues can be studied in bounded domains using variational 
	methods  see \cite{LL,MR3556755,FP,MR3552458,MR3411543}.
	
	\medskip

	All spectrum in bounded domain for the fractional Laplacian ($p=2$) 
	is studied in \cite{MR3002745}, see also \cite{MR3089742}. 
	The variational unbounded sequence of eigenvalues of the fractional 
	$p-$Laplacian is studied in \cite{MR3411543}. 
	
	\medskip
	
	In unbounded domain, in particular $\mathbb{R}^N$, some weight function 
	with some condition needs to be introduce, moreover non-existence may also 
	appear. Spectrum in $\mathbb{R}^N$ for local problems with weights 
	are studied in \cite{MR1007489,MR1098396,MR1364493,MR1336957,MR1356326}. 
	As far as we know, there isn't an extension of these type of results for 
	the nonlocal setting even when $p=2$. Therefore, one of the main purpose 
	of this work is study the spectrum in $\mathbb{R}^N$ in a nonlocal setting.
	
	Finally, let observe that the eigenvalues are a starting point in study some 
	type bifurcation results in  $\mathbb{R}^N$, 
	see for example \cite{MR1390979} and \cite{MR1412438} in the local case. 
	Other type of bifurcation results in $\mathbb{R}^N$ for the fractional 
	Laplacian can be found in \cite{MR3635980}. 
	While in \cite{MR3556755}, the authors show a bifurcation results 
	in bounded domain for the fractional $p-$Laplacian.

	\bigskip
 
	Now we will describe our results. As in the local case, we will split 
	the discussion in to two cases $sp<N$ and  $sp\geq N,$ 
	where different approaches are needed. But first we need to introduce 
	the theoretical framework for them. 
	
	The fractional Sobolev spaces $\wspr$
	is defined to be the set of functions $u\in\lpr$ such that
		\[
			\llbracket u\rrbracket_{s,p}^p\coloneqq
			\int_{\mathbb{R}^N}\int_{\mathbb{R}^N}
			\dfrac{|u(x)-u(y)|^p}{|x-y|^{N+sp}}\, dxdy<\infty.
		\]
	While, the closure of $\ccir$ with respect to the
		norm $\llbracket \cdot\rrbracket_{s,p}$ is denoted by $\wsprc.$
		For more details about the spaces, see Section \ref{NyP}.

    \begin{definition}\label{def:eigen} 
        Let $g\in\lir$ be such that $g\not\equiv0.$ 
        \begin{enumerate}[(i)]
	        \item If $sp<N$ and $g\in\lpsn$  we say that 
	            a pair $(u,\lambda)\in\wsprc\times\mathbb{R}$ 
	            is a weak solution of \eqref{eq:ep}
                if
                \begin{equation}\label{eq:ws}
	                \begin{aligned}
                        \int_{\mathbb{R}^N}\int_{\mathbb{R}^N}
                         &
                         \dfrac{|u(x)-u(y)|^{p-2}(u(x)-u(y))
                         (\varphi(x)-\varphi(y))}
                         {|x-y|^{N+ps}}\,dxdy\\
                        &\hspace{4cm}=
                        \lambda \int_{\mathbb{R}^N}g(x)|u(x)|^{p-2}u(x)\varphi(x) dx,
                     \end{aligned}
                \end{equation}              
                for all $\varphi\in\ccir.$ 
            \item If $sp\ge N,$  we say that 
	            a pair $(u,\lambda)\in\wspr\times\mathbb{R}$ is a weak solution of
	            \eqref{eq:ep}
                if $\eqref{eq:ws}$ holds for all $\varphi\in\ccir.$ 
            \item In both cases, a pair $(u,\lambda)$ 
                is called an eigenpair, in which case, 
                $\lambda$ is called an eigenvalue and  $u$ a
                corresponding eigenfunction.             
        \end{enumerate}        
    \end{definition}

   	Lastly,
    $g_+(x)\coloneqq\max\{g(x),0)\}$ and $g_{-}(x)\coloneqq\max\{-g(x),0)\}.$

	\medskip

	Our first aim is extended the results for the local case given by 
	\cite{MR1356326, MR1364493} to the nonlocal case.
	Let start with the case $sp<N$.    
    \begin{theorem}\label{thm:existence1} 
    	Assume that $sp<N$ and $g\in \lir\cap\lpsn.$ 
        \begin{enumerate}[(i)]
	        \item If $g_+\not\equiv0$ then there exists a sequence of eigenpairs 
	        $\{(u_n,\lambda_n(g))\}_{n\in\mathbb{N}}$ such that 
	        \[
	            \int_{\mathbb{R}^n} g(x)|u_n(x)|^p\,dx=1\quad\forall 
	            n\in\mathbb{N},
	        \]
	        and
	        \[
	            0<\lambda_1(g)<\lambda_2(g)\le\cdots\le\lambda_n(g)\to\infty.
	        \]
	        Moreover
	        \[
	            \lambda_1(g)=\min\left\{\llbracket u\rrbracket_{s,p}^p\colon 
	            u\in \wsprc \text{ and }\int_{\mathbb{R}^n} g(x)|u(x)|^p
	            \,dx=1\right\},
	        \]
	        is a simple eigenvalue with constant sign eigenfunction.
	        
	        \item If $g_\pm\not\equiv0$ 
	        then there exist two sequence of eigenpairs 
	        $\{(u_n,\lambda_n^+(g))\}_{n\in\mathbb{N}}$ 
	        and $\{(v_n,\lambda_n^-(g))\}_{n\in\mathbb{N}}$ 
	        such that 
	        \[
	            \int_{\mathbb{R}^n} g(x)|u_n(x)|^p\,dx=1\quad \text{ and }\quad 
	            \int_{\mathbb{R}^n} g(x)|v_n(x)|^p \,dx=-1
	        \]
	        and
	    	\begin{align*}
	            &0<\lambda_1^+(g)<\lambda_2^+(g)\le\cdots\le\lambda_n^+(g)
	            \to\infty 
	            \,\text{  and  }\\
	            &0>\lambda_1^-(g)>\lambda_2^-(g)\ge\cdots\ge
	            \lambda_n^-(g)\to-\infty.
            \end{align*}
	        Moreover
	        \begin{align*}
	                \lambda_1^+(g)
	                &=\min\left\{\llbracket u\rrbracket_{s,p}^p\colon
	                u\in \wsprc \text{ and } 
	                \int_{\mathbb{R}^n} g(x)|u(x)|^p\,dx=1\right\},\\
	                -\lambda_1^-(g)&=\min\left\{\llbracket u\rrbracket_{s,p}^p
	                \colon 
	                 u\in \wsprc \text{ and }
	                 \int_{\mathbb{R}^n} g(x)|u(x)|^p\,dx=-1\right\},
            \end{align*}
            are simple eigenvalues with constant sign eigenfunctions.
        \end{enumerate}
    \end{theorem}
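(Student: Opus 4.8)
The plan is to run the classical constrained variational / Ljusternik--Schnirelmann scheme (as for the fractional $p$-Laplacian eigenvalues in bounded domains in \cite{MR3411543} and for the local weighted problems in \cite{MR1356326,MR1364493}), the one genuinely nonlocal ingredient being a compactness lemma for the weighted term. First I would fix the functional framework. The space $\wsprc$ is uniformly convex --- it is isometric to a closed subspace of $L^p$ of the product measure $|x-y|^{-N-sp}\,dx\,dy$ via $u\mapsto u(x)-u(y)$ --- hence reflexive, and since $sp<N$ the fractional Sobolev inequality makes $\llbracket\cdot\rrbracket_{s,p}$ a genuine norm with $\wsprc\hookrightarrow L^{p_s^*}(\mathbb{R}^N)$ continuously and $\wsprc\hookrightarrow\hookrightarrow L^q(B_R)$ compactly for every $R>0$ and $q<p_s^*$. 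Set $\Phi(u)\coloneqq\llbracket u\rrbracket_{s,p}^p$ and $B(u)\coloneqq\int_{\mathbb{R}^N}g|u|^p\,dx$; Hölder with exponents $N/sp$ and $p_s^*/p$ shows $B$ is well defined and bounded on bounded sets, and $\Phi,B\in C^1(\wsprc)$, with $\Phi'(u)$ the bilinear form on the left of \eqref{eq:ws} and $\langle B'(u),\varphi\rangle=p\int g|u|^{p-2}u\varphi$. The level sets $\mathcal{M}^\pm\coloneqq\{u\in\wsprc:B(u)=\pm1\}$ are then $C^1$ submanifolds, and a rescaled bump supported where $g_\pm\not\equiv0$ shows $\mathcal{M}^+\neq\emptyset$ when $g_+\not\equiv0$ and $\mathcal{M}^-\neq\emptyset$ when $g_-\not\equiv0$.

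The key step --- and the one I expect to be the main obstacle --- is the (weak-to-strong) compactness of the weighted term: if $u_k\rightharpoonup u$ in $\wsprc$, then $B(u_k)\to B(u)$ and, more generally, $\int g|u_k|^{p-2}u_k\varphi\to\int g|u|^{p-2}u\varphi$ for every $\varphi\in\ccir$. The argument I have in mind splits $\mathbb{R}^N$ into a large ball $B_R$, on which the local compact embedding gives $u_k\to u$ in $L^p(B_R)$ and hence a.e. convergence (so $\int_{B_R}g|u_k|^p\to\int_{B_R}g|u|^p$), and its complement, on which Hölder together with the uniform bound $\|u_k\|_{L^{p_s^*}(\mathbb{R}^N)}\le C$ gives $|\int_{\mathbb{R}^N\setminus B_R}g|u_k|^p|\le C^p\|g\|_{L^{N/sp}(\mathbb{R}^N\setminus B_R)}$, which is small for $R$ large precisely because $g\in\lpsn$. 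This lemma is what forces $\Phi$ restricted to $\mathcal{M}^\pm$ to satisfy the Palais--Smale condition: a (PS) sequence is bounded, hence weakly convergent, and combining the constraint equation, the compactness of $B$, and the $(S_+)$ property of $(-\Delta_p)^s$ (weak convergence together with $\limsup\langle\Phi'(u_k),u_k-u\rangle\le0$ forces strong convergence) upgrades the weak convergence to strong.

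With this in place, the first eigenvalue comes from direct minimization: $\lambda_1^+(g)\coloneqq\inf_{\mathcal{M}^+}\Phi$ is attained (a minimizing sequence is bounded since $\Phi$ is a norm to the $p$-th power, it converges weakly, the limit remains on $\mathcal{M}^+$ by compactness of $B$ and beats the liminf of $\Phi$ by weak lower semicontinuity), it is positive, and the Lagrange multiplier rule tested against the minimizer $u$ gives $(-\Delta_p)^su=\lambda_1^+(g)\,g|u|^{p-2}u$ weakly, so $(u,\lambda_1^+(g))$ is an eigenpair with the stated minimax value. Replacing $u$ by $|u|$ is legitimate because $\big||a|-|b|\big|\le|a-b|$ yields $\llbracket|u|\rrbracket_{s,p}\le\llbracket u\rrbracket_{s,p}$ while $B(|u|)=B(u)$, so one may take $u\ge0$, and a strong minimum principle for $(-\Delta_p)^s$ promotes this to $u>0$ a.e.; simplicity then follows from the nonlocal Picone / hidden-convexity inequality, inserting $(u^p-v^p)/u^{p-1}$ and $(v^p-u^p)/v^{p-1}$ as test functions for two positive normalized minimizers $u,v$ and concluding $u\equiv v$. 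When in addition $g_-\not\equiv0$, the identical argument on $\mathcal{M}^-$ gives $\lambda_1^-(g)=-\min_{\mathcal{M}^-}\Phi<0$, simple, with a constant-sign eigenfunction, and $-\lambda_1^-(g)$ has the stated minimax form.

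For the higher eigenvalues I would use the Ljusternik--Schnirelmann minimax values
\[
\lambda_n^\pm(g)\coloneqq\inf_{A\in\Sigma_n^\pm}\ \sup_{u\in A}\Phi(u),
\]
where $\Sigma_n^\pm$ denotes the family of compact symmetric sets $A=-A\subset\mathcal{M}^\pm$ of Krasnoselskii genus at least $n$. Since $\Phi$ restricted to the $C^1$ manifold $\mathcal{M}^\pm$ is even, coercive and satisfies (PS), classical Ljusternik--Schnirelmann theory gives that each $\lambda_n^\pm(g)$ is a critical value with a corresponding eigenfunction on $\mathcal{M}^\pm$, that $n\mapsto\lambda_n^+(g)$ is nondecreasing (and $n\mapsto\lambda_n^-(g)$ nonincreasing), that $\lambda_1^+(g)<\lambda_2^+(g)$ because a genus-$\ge2$ set at the minimal level would contradict simplicity of $\lambda_1^+(g)$, and that $\lambda_n^+(g)\to\infty$ (respectively $\lambda_n^-(g)\to-\infty$) since $\mathcal{M}^\pm$ carries symmetric subsets of arbitrarily large genus on which the norm $\Phi$ is unbounded. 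This establishes both parts of the theorem.
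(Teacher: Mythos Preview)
Your proposal is correct and follows essentially the same route as the paper: the paper also reduces everything to a Palais--Smale verification (its Lemma~\ref{lemma:plc}) based on exactly the weighted-term compactness you sketch (stated there as Corollary~\ref{coro:importante}), then invokes the Lusternik--Schnirelmann principle for the sequence of eigenvalues and treats constant sign, strict positivity and simplicity of $\lambda_1^\pm$ via the inequality $\llbracket|u|\rrbracket_{s,p}<\llbracket u\rrbracket_{s,p}$, a strong minimum principle, and the discrete Picone identity. The only cosmetic difference is that the paper writes the strong-convergence step in the PS argument as an explicit norm computation ($\liminf\llbracket u_{n_k}\rrbracket_{s,p}=\llbracket u\rrbracket_{s,p}$) rather than naming it the $(S_+)$ property.
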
 
    
    \medskip

	Let now discuss the case $sp\geq N$ and give the following result. 

	\begin{theorem}\label{teo:Esp>N}
        Assume $sp\ge N$ and $g=g_1-g_2$ satisfies 
        \begin{itemize}
	        \item $g_1(x)\ge0$ a.e. in $\mathbb{R}^N$ and 
	            $g_1\in\lir\cap L^{\nicefrac{N_0}{sp}}(\mathbb{R}^N)
	            \setminus\{0\},$
	            with $N_0\in\mathbb{N}$ such that $N_0>sp;$
	        \item $g_2(x)\ge\varepsilon>0$ a.e. in $\mathbb{R}^N.$ 
        \end{itemize} 
        Then there exists a sequence of eigenpairs 
	        $\{(u_n,\lambda_n)\}_{n\in\mathbb{N}}$ such that 
	        \[
	            \int_{\mathbb{R}^n} g(x)|u_n(x)|^p\,dx=1\quad\forall 
	            n\in\mathbb{N},
	        \]
	        and
	        \[
	            0<\lambda_1(g)<\lambda_2(g)\le\cdots\le\lambda_n(g)\to\infty.
	        \]
	        Moreover
	        \[
	            \lambda_1(g)=\min\left\{\llbracket u\rrbracket_{s,p}^p\colon 
	            u\in \wspr \text{ and }\int_{\mathbb{R}^n} 
	            g(x)|u(x)|^p\,dx=1\right\},
	          \]
	       is a simple eigenvalue with constant sign eigenfunction. 
	\end{theorem}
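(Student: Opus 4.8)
The plan is to set up a variational framework in the space $\wspr$ (which for $sp\ge N$ is the natural energy space, since $\ccir$ embeds densely and compactly into weighted $L^p$-type spaces) and obtain the sequence of eigenvalues via a standard Lusternik--Schnirelmann / Krasnoselskii genus argument applied to the constrained functional. Concretely, I would introduce
\[
\mathcal{M}\coloneqq\left\{u\in\wspr\colon\int_{\mathbb{R}^N}g(x)|u(x)|^p\,dx=1\right\}
\]
and consider minimizing $J(u)\coloneqq\llbracket u\rrbracket_{s,p}^p$ on $\mathcal{M}$, then more generally defining
\[
\lambda_n(g)\coloneqq\inf_{K\in\Sigma_n}\ \sup_{u\in K}\ J(u),
\]
where $\Sigma_n$ is the family of symmetric, compact subsets $K\subset\mathcal{M}$ with Krasnoselskii genus $\ge n$. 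The eigenfunction equation \eqref{eq:ws} is then the Euler--Lagrange equation of this constrained problem, the Lagrange multiplier being the eigenvalue.

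The key steps, in order: (1) verify that $\mathcal{M}$ is nonempty — this uses $g_1\not\equiv0$ together with density of $\ccir$; (2) establish the crucial \emph{compactness} property: the map $u\mapsto\int g|u|^p$ is weakly continuous on $\wspr$, which I would prove by splitting $g=g_1-g_2$, handling the $g_1$ term via the hypothesis $g_1\in L^{N_0/sp}(\mathbb{R}^N)$ and a fractional Sobolev embedding on bounded balls plus a tail estimate (the $L^{N_0/sp}$ integrability makes the tail small), and handling the $g_2$ term by noting that on $\mathcal{M}$ one controls $\int g_2|u|^p=\int g_1|u|^p-1$, so bounded sequences in $\mathcal{M}$ are bounded in $\wspr$ because $g_2\ge\varepsilon>0$ gives coercivity of $\|\cdot\|_{L^p}$ on the constraint set; (3) deduce that $J$ satisfies the Palais--Smale condition on $\mathcal{M}$ from steps (2) and the $(S_+)$-property of the fractional $p$-Laplacian operator, so that the min-max values $\lambda_n(g)$ are critical values; (4) conclude $\lambda_n(g)\to\infty$ from the fact that the genus-$n$ sublevel sets would otherwise force an infinite-dimensional subspace into a bounded region, contradicting compactness; (5) prove $\lambda_1(g)>0$ (strict positivity) — if $\lambda_1(g)=0$ then a minimizer $u$ would be constant, but no nonzero constant lies in $\wspr$ when $sp\ge N$ unless $N$ small, and in any case $\int g<\infty$ with the constraint forces a contradiction; and (6) prove simplicity of $\lambda_1(g)$ and constant sign of its eigenfunction via the strict convexity / hidden convexity argument (the map $t\mapsto\llbracket (tv^p+(1-t)w^p)^{1/p}\rrbracket_{s,p}^p$ is convex, a discrete analogue of the Díaz--Saá inequality for the fractional $p$-Laplacian), exactly as in the bounded-domain case, together with replacing $u$ by $|u|$ to get nonnegativity and then a strong minimum principle for the fractional $p$-Laplacian to upgrade to strict positivity.

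I expect step (2), the weak continuity of the weighted term, to be the main obstacle, because $\mathbb{R}^N$ is unbounded so there is no compact Sobolev embedding available globally; the whole point of the hypotheses on $g_1$ (integrability $L^{N_0/sp}$ with $N_0>sp$) and on $g_2$ (uniform positivity) is precisely to restore enough compactness. The $g_2\ge\varepsilon$ assumption is doing double duty: it provides the coercivity that keeps Palais--Smale sequences bounded in $\wspr$ (rather than merely bounded in seminorm), and it controls the behavior at infinity so that the negative part of the weight cannot cause loss of compactness. Once weak continuity of $u\mapsto\int g|u|^p$ and boundedness of constrained Palais--Smale sequences are in hand, the remaining analysis — the genus-based min-max, $\lambda_n\to\infty$, simplicity, sign of the first eigenfunction — parallels the arguments already used in the bounded-domain literature \cite{MR3411543,FP} and in part (i) of Theorem \ref{thm:existence1}, so I would state those parts more briefly and refer back. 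A secondary technical point worth care is that $\wspr$ with the seminorm $\llbracket\cdot\rrbracket_{s,p}$ is genuinely a Banach space in the regime $sp\ge N$ (no need to pass to $\wsprc$), which should be recorded from Section~\ref{NyP} before running the argument.
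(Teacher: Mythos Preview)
Your overall architecture matches the paper's: constrain to $\mathcal{N}=\{u\in\wspr:\int g|u|^p=1\}$, verify Palais--Smale for $\Phi(u)=\tfrac1p\llbracket u\rrbracket_{s,p}^p$ on $\mathcal{N}$, and invoke Lusternik--Schnirelmann; the simplicity and sign properties are then handled separately via discrete Picone/hidden convexity, exactly as in Section~\ref{PE}. The substantive divergence is in how you obtain the key coercivity estimate $\|u\|_p^p\le C(1+\llbracket u\rrbracket_{s,p}^p)$ on $\mathcal{N}$. You propose to control $\int g_1|u|^p$ by H\"older against $\|g_1\|_{N_0/sp}$ and a direct Sobolev/Morrey embedding $\wspr\hookrightarrow L^{pN_0/(N_0-sp)}(\mathbb{R}^N)$ (valid since $sp\ge N$), plus a tail estimate for weak continuity. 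The paper instead proves this via a \emph{dimension-lifting} trick (Theorem~\ref{theorm:embedding}): setting $M=N_0-N$, it tensors $\varphi\in\ccir$ with a first Dirichlet eigenfunction on a ball $B_R^M\subset\mathbb{R}^M$ to produce $\tau(x,y)=\varphi(x)v(y)\in W^{s,p}(\mathbb{R}^{N_0})$, then applies the subcritical Sobolev inequality in $\mathbb{R}^{N_0}$ (where now $sp<N_0$) and optimizes in $R$. Your route is more elementary and works, but requires knowing the borderline embedding $\wspr\hookrightarrow L^q(\mathbb{R}^N)$ for all finite $q$ when $sp=N$; the paper's lift avoids splitting into cases and makes the appearance of the exponent $N_0/sp$ completely transparent (it is exactly the H\"older dual needed in dimension $N_0$). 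One small correction: your final remark that $\llbracket\cdot\rrbracket_{s,p}$ alone makes $\wspr$ a Banach space is not right in this regime---there is no global Poincar\'e inequality on $\mathbb{R}^N$, so the paper works with the full norm, and the coercivity step is precisely what supplies the missing $\|u\|_p$ control on $\mathcal{N}$.
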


	\medskip
	
	We also give the following non-existence results.

	\begin{theorem}\label{theorem:nonexistence}
        If $sp>N,$ $g\in \lir$ and $   
        \displaystyle\int_{\mathbb{R}^N} g(x) dx >0$ then there is not a 
        positive principle eigenvalue.
    \end{theorem}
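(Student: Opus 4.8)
The plan is to argue by contradiction, combining the fractional Picone inequality with the observation that when $sp>N$ the whole space $\mathbb{R}^N$ carries ``no $(s,p)$--capacity at infinity''. Suppose, towards a contradiction, that \eqref{eq:ep} admits an eigenpair $(u,\lambda)$ with $\lambda>0$ and a nonnegative, nontrivial (constant--sign) eigenfunction $u$; after replacing $u$ by $-u$ if necessary (the operator $(-\Delta_p)^s$ is odd) there is no loss in assuming $u\ge0$, $u\not\equiv0$. Since $sp>N$, the Morrey--type embedding for fractional Sobolev spaces gives $\wspr\hookrightarrow C^{0,s-N/p}(\mathbb{R}^N)\cap\lir$ (see \cite{DNPV}), so $u$ has a continuous, bounded representative; by the strong minimum principle for $(-\Delta_p)^s$ one then upgrades this to $u(x)>0$ for every $x\in\mathbb{R}^N$. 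In particular $u$ is bounded and bounded away from $0$ on every compact set, which is all that will be needed.

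First I would record the following consequence of Picone's inequality: for every $v\in\ccir$ with $v\ge0$,
\begin{equation}\label{eq:picone-consequence}
    \lambda\int_{\mathbb{R}^N}g(x)\,v(x)^p\,dx\le\llbracket v\rrbracket_{s,p}^p .
\end{equation}
Indeed, because $u$ is continuous and strictly positive, $\varphi\coloneqq v^p/u^{p-1}$ is nonnegative, bounded, supported inside $\operatorname{supp}v$, and belongs to $\wspr$: one writes it as the product of the smooth compactly supported function $v^p$ with $\rho\circ u$, where $\rho$ is a globally Lipschitz function coinciding with $t\mapsto t^{1-p}$ on the compact interval $[\,\min_{\operatorname{supp}v}u\,,\,\|u\|_{\infty}\,]$, and uses the fractional Leibniz estimate together with $\llbracket\rho\circ u\rrbracket_{s,p}\le\operatorname{Lip}(\rho)\,\llbracket u\rrbracket_{s,p}$. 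By density of $\ccir$ in $\wspr$, the weak formulation \eqref{eq:ws} extends to all compactly supported functions of $\wspr$, so $\varphi$ is admissible. Plugging $\varphi$ into \eqref{eq:ws}, using $|u|^{p-2}u\,\varphi=u^{p-1}\varphi=v^p$ on the right--hand side, and estimating the left--hand side by means of the pointwise fractional Picone inequality
\[
    |u(x)-u(y)|^{p-2}(u(x)-u(y))\left(\frac{v(x)^p}{u(x)^{p-1}}-\frac{v(y)^p}{u(y)^{p-1}}\right)\le|v(x)-v(y)|^p
\]
integrated against $|x-y|^{-N-sp}\,dx\,dy$, one obtains exactly \eqref{eq:picone-consequence}.

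Next I would insert a suitable family of rescaled cut--offs. Fix a radial, radially non--increasing $\eta\in\ccir$ with $0\le\eta\le1$, $\eta\equiv1$ on $B_1$ and $\operatorname{supp}\eta\subset B_2$, and set $\eta_R(x)\coloneqq\eta(x/R)$ for $R>0$. The change of variables $x=R\xi$, $y=R\zeta$ yields $\llbracket\eta_R\rrbracket_{s,p}^p=R^{N-sp}\,\llbracket\eta\rrbracket_{s,p}^p$, which tends to $0$ as $R\to\infty$ precisely because $sp>N$. On the other hand $0\le\eta_R^p\le1$ and $\eta_R^p(x)\uparrow1$ for every $x$ as $R\to\infty$, so by monotone convergence $\int_{\mathbb{R}^N}g_\pm\,\eta_R^p\,dx\to\int_{\mathbb{R}^N}g_\pm\,dx$, whence $\int_{\mathbb{R}^N}g\,\eta_R^p\,dx\to\int_{\mathbb{R}^N}g\,dx$. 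Taking $v=\eta_R$ in \eqref{eq:picone-consequence} and letting $R\to\infty$ gives $\lambda\int_{\mathbb{R}^N}g\,dx\le0$; since $\lambda>0$ this forces $\int_{\mathbb{R}^N}g\,dx\le0$, contradicting the hypothesis. Hence no positive principal eigenvalue exists.

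The scaling identity $\llbracket\eta_R\rrbracket_{s,p}^p=R^{N-sp}\llbracket\eta\rrbracket_{s,p}^p$ is the engine of the proof, but it is a one--line computation, and the passages to the limit are routine. The two delicate points are the tools borrowed from the qualitative theory of $(-\Delta_p)^s$: I expect the main obstacle to be the verification that $\varphi=v^p/u^{p-1}$ is an admissible test function in \eqref{eq:ws} --- that is, that $\varphi\in\wspr$, which is where the continuity and strict positivity of $u$ and the fractional Leibniz/chain rules enter --- followed by the strong minimum principle used to pass from a nonnegative nontrivial eigenfunction to a strictly positive one. Once these are secured, the rest of the argument is elementary.
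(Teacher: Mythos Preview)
Your argument is correct and rests on the same two ingredients as the paper's proof: the fractional Picone inequality and the scaling identity $\llbracket\eta_R\rrbracket_{s,p}^p=R^{N-sp}\llbracket\eta\rrbracket_{s,p}^p\to0$ when $sp>N$. The organization, however, differs. The paper factors the argument through the Dirichlet principal eigenvalue on balls: it first proves (Lemma~\ref{lemma:nonexistence}) that $\mu_1(B_R,g)\to0$ by plugging a rescaled cut--off into the Rayleigh quotient, and then, in the proof of the theorem, applies Picone with the Dirichlet eigenfunction $v_R$ in the numerator and the regularization $u_m=u+\tfrac1m$ in the denominator to obtain $\lambda\le\mu_1(B_R,g)$. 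You collapse these two steps into one by inserting the cut--off $\eta_R$ directly into the Picone inequality, which is a genuine streamlining; and you replace the $u+\tfrac1m$ device by the Morrey embedding (available since $sp>N$), which makes $u$ continuous and hence bounded away from zero on $\operatorname{supp}\eta_R$, so that $v^p/u^{p-1}$ is immediately admissible. The paper's detour through $\mu_1(B_R,g)$ buys nothing extra here, but its regularization trick has the advantage of not invoking the strong minimum principle --- a point you correctly flag as the most delicate in your version.
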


	The existence part, as in the local case, 
	is based on Lusternik--Schnirelman principle, 
	for details see \cite{MR0269962,MR578914,MR768749}.

  	\bigskip
  	
  	On the other hand, if we assume $g\in \lir$ then 
  	the solution found in the above theorems are H\"older continuous  
  	(see  Section \ref{HR}). 
  	Therefore we get 
  	$$
  		\lim\limits_{|x|\to+\infty}u(x)=0
  	$$
  	for any eigenfunction. Motivated by this result, we  study the 
  	asymptotic behaviour of positive eigenfunctions. 
  	More precisely, our fourth result
  	is a sharp decay estimate for the positive eigenfunction 
  	$u$ associated with $\lambda_1$ given by Theorem \ref{teo:Esp>N}. 
  	
  \begin{theorem}\label{theorem:decay1}
           Assume $sp\ge N$ and $g=g_1-g_2$ satisfies 
        \begin{itemize}
	        \item $g_1(x)\ge0$ a.e. in $\mathbb{R}^N$ and 
	            $g_1\in\lir\cap L^{\nicefrac{N_0}{sp}}
	            (\mathbb{R}^N)\setminus\{0\},$
	            with $N_0\in\mathbb{N}$ such that $N_0>sp;$
	        \item $g_2(x)\ge\varepsilon>0$ a.e. in $\mathbb{R}^N$ and 
	            $g_2\in\lir;$
	        \item $g(x)<-\delta<0$ for $|x|$ large enough.
        \end{itemize} 	
        Let $u\in\wspr$ be a positive eigenfunction associated to $\lambda_1(g).$ 
        Then
        there exists $k>>1$ such that
        \begin{equation}
        	\label{aaa}
        	C_1|x|^{-\frac{N+sp}{p-1}}\le u(x)\le 
            C_2|x|^{-\frac{N+sp}{p-1}}
        \end{equation}
        for any $|x|>k$ and some positive constants $C_1$ and $C_2$. 
    \end{theorem}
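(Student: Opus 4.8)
The plan is to establish the two-sided bound \eqref{aaa} by constructing explicit comparison functions of the form $w(x) = c|x|^{-\frac{N+sp}{p-1}}$ (suitably truncated near the origin) and invoking a comparison principle for the operator $(-\Delta_p)^s - \lambda_1 g(\cdot)|\cdot|^{p-2}$ in the region $\{|x|>k\}$, where by hypothesis the zeroth-order coefficient $\lambda_1 g$ has a sign, namely $\lambda_1 g(x) < -\lambda_1\delta < 0$. The exponent $\beta := \frac{N+sp}{p-1}$ is forced by the requirement that $|x|^{-\beta}$ be, up to a bounded perturbation, $(-\Delta_p)^s$-harmonic at infinity; the first step is therefore a careful asymptotic computation showing that for $W(x)=|x|^{-\beta}$ one has $(-\Delta_p)^s W(x) = o\big(|x|^{-\beta(p-1)}\big)$ as $|x|\to\infty$ (more precisely, that $(-\Delta_p)^s W(x)$ decays strictly faster than $W(x)^{p-1}$), so that the lower-order term $-\lambda_1\delta\, W(x)^{p-1}$ dominates and $W$ becomes a supersolution of the eigenvalue equation in a neighborhood of infinity, while a small multiple $\epsilon W$ becomes a subsolution there (after absorbing the contribution of the nonlocal "tail" coming from the bounded region where $W$ is large).

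Concretely I would proceed as follows. First, recall from Section \ref{HR} that $u\in\wspr\cap C^{0,\alpha}_{\mathrm{loc}}$ is positive and, since $u\in\lpr$, bounded with $u(x)\to0$ as $|x|\to\infty$; in particular $u$ is bounded above and bounded below by a positive constant on any fixed annulus, and $u\le M$ globally. Second, fix $k$ large enough that $g(x)<-\delta$ for $|x|\ge k$, and set $\Sigma_k:=\{|x|>k\}$. For the upper bound, take the barrier $\overline{w}(x) = A\max\{k^{-\beta}, |x|^{-\beta}\}$ for a constant $A$ chosen so that $\overline{w}\ge u$ on $\{|x|\le k\}$ (possible since $u$ is bounded). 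Using the asymptotic estimate on $(-\Delta_p)^s(|x|^{-\beta})$ together with the fact that the truncation near the origin only adds a contribution of order $|x|^{-N-sp}$ to $(-\Delta_p)^s\overline{w}(x)$ for $|x|$ large — which is again $o(|x|^{-\beta(p-1)})$ since $N+sp > \beta(p-1) = N+sp$ fails, so here one must check the exponents match and the sign works out via the strict negativity $-\lambda_1\delta$ — one shows
\[
(-\Delta_p)^s\overline{w}(x) \ge \lambda_1 g(x)\,\overline{w}(x)^{p-1}\quad\text{in }\Sigma_k,
\]
i.e. $\overline{w}$ is a weak supersolution of \eqref{eq:ep} in $\Sigma_k$. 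Third, invoke the comparison principle for the fractional $p$-Laplacian with a nonpositive zeroth-order coefficient (the coefficient is $\lambda_1 g < 0$ on $\Sigma_k$): since $\overline{w}\ge u$ on $\mathbb{R}^N\setminus\Sigma_k$ and $\overline{w}$ is a supersolution while $u$ is a solution on $\Sigma_k$, we get $u\le\overline{w}$ on all of $\mathbb{R}^N$, which yields $u(x)\le C_2|x|^{-\beta}$ for $|x|>k$. Fourth, for the lower bound, build a subsolution $\underline{w}(x)=\epsilon\,\eta(|x|)|x|^{-\beta}$ (with $\eta$ a cutoff making $\underline w$ vanish for $|x|\le k$ and equal $1$ for $|x|\ge 2k$), check via the same asymptotics that $(-\Delta_p)^s\underline{w}(x)\le\lambda_1 g(x)\underline w(x)^{p-1}$ for $|x|\ge 2k$ provided $\epsilon$ is small, use the strict positivity of $u$ on the annulus $\{k\le|x|\le 2k\}$ to arrange $\underline w\le u$ on $\{|x|\le 2k\}$, and apply comparison again to conclude $u\ge\underline w$, giving $u(x)\ge C_1|x|^{-\beta}$ for $|x|>2k$ (enlarging $k$).

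The main obstacle is the barrier computation: showing that $(-\Delta_p)^s(|x|^{-\beta})$ has the claimed decay rate, with the correct sign/size relative to $|x|^{-\beta(p-1)}$, and controlling the error introduced by the truncation and the cutoff $\eta$. Unlike the linear case $p=2$, where $|x|^{-\beta}$ can sometimes be an exact solution of the fractional Laplace equation for the right $\beta$, here one only gets an \emph{asymptotic} cancellation, so the argument genuinely relies on the lower-order term $-\lambda_1\delta\,|x|^{-\beta(p-1)}$ having a \emph{strictly} larger (in absolute value) order than the remainder of $(-\Delta_p)^s(|x|^{-\beta})$ — which is exactly why the hypothesis $g<-\delta<0$ at infinity (rather than merely $g\to 0^-$) is imposed, and why we need $sp\ge N$ so that $u\in\lpr$ forces the matching integrability. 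A secondary technical point is justifying the comparison principle in the unbounded region $\Sigma_k$ for weak solutions in $\wspr$; this should follow from the standard monotonicity of $(-\Delta_p)^s$ tested against $(u-\overline w)_+\in\twspo$-type functions, together with the sign of the coefficient, but the unboundedness of the domain requires checking that all the relevant integrals converge, which is guaranteed by the polynomial decay of the barriers and of $u$.
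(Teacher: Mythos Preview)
Your overall architecture (barrier plus comparison principle on $\{|x|>k\}$) matches the paper's, but the barrier analysis has a genuine gap that you yourself half-notice and do not resolve. You write that $(-\Delta_p)^s W(x)=o(|x|^{-\beta(p-1)})$ so that the zeroth-order term ``dominates''; with $\beta=\frac{N+sp}{p-1}$ one has $\beta(p-1)=N+sp$, and in fact $(-\Delta_p)^s W(x)$ is \emph{exactly} of order $|x|^{-(N+sp)}$, not smaller. There is no asymptotic cancellation: the integral over $\{|y|<|x|/2\}$, where $W(y)^{p-1}$ is not integrable-decaying faster than this, contributes a term of size $c|x|^{-(N+sp)}$. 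So the inequality you need cannot come from a ``lower order versus leading order'' argument.

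What the paper does instead is twofold. First (Lemma~\ref{lemma:decay}), it proves a \emph{signed} estimate: for a smooth radially decreasing $\Upsilon$ with $\Upsilon(x)=|x|^{-\beta}$ at infinity, one has both $|(-\Delta_p)^s\Upsilon(x)|\le c_3|x|^{-(N+sp)}$ and, crucially, $(-\Delta_p)^s\Upsilon(x)\le -c_4|x|^{-(N+sp)}$ for large $|x|$. The negativity comes precisely from the mass of $\Upsilon$ near the origin (the $I_4$ term), so your subsolution $\underline w=\epsilon\,\eta(|x|)|x|^{-\beta}$ with $\eta\equiv 0$ on $\{|x|\le k\}$ would \emph{lose} this sign --- indeed such a compactly-supported-away-from-zero function has $(-\Delta_p)^s\underline w(x)>0$ for large $|x|$, the wrong direction. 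Second, since $(-\Delta_p)^s\Upsilon$ and $\Upsilon^{p-1}$ are the \emph{same} order, the paper uses \emph{scaling}: setting $\phi(x)=K\Upsilon(Rx)$ turns the intrinsic barrier constants $c_3,c_4$ into $R^{sp}c_3,R^{sp}c_4$, and $R$ is then chosen to match the coefficient $\lambda_1\|g_2\|_\infty$ (for the lower bound) or $\lambda_1\delta$ (for the upper bound). Only after that does the comparison principle (your Theorem~\ref{theorem:comparison}, with the constant potential $V\ge0$) apply. Your sketch contains neither the sign computation nor the scaling step, and without them the barrier inequalities you write down do not hold.
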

    
    The base in established this sharp decay estimate is Lemma \ref{lemma:decay},
    that is a nonlinear version of  \cite[Lemma 2.1]{MR3122168}(case $p=2$). 
    This lemma is the computation of the fractional $p-$Laplacian for a power 
    like function at infinity that give good sub and super-solutions. 
	Moreover, these sub and super-solutions can also be used to prove 
	decay estimate Schr\"odinger type equations, such result, in the case $p=2$, 
	can be found in \cite{MR3002595}.
	We remark that our nonlinear version of \cite[Lemma 2.1]{MR3122168}, 
	can be useful for other proposes like for example in the study of parabolic 
	problems as in \cite{MR3122168}.
	
	We would also like to remark that the above theorem 
	shows a difference between the local and nonlocal cases 
	since in the local case the eigenfunctions decay exponentially at 
    infinity, see \cite{MR1364493}. 

	\bigskip

	Finally, we are concerned with the decay rate at infinity of all 
	positive ground state solutions of the next autonomous Schr\"odinger equations
    \begin{equation}\label{eq:asc}
	    \begin{cases}
            (-\Delta_p)^su(x)+\mu|u|^{p-2}u=f(u) &\text{in }\mathbb{R}^N,\\
            u\in\wspr\\
            u(x)>0 &\text{for all }x\in\mathbb{R}^N, \mu>0.	    
	    \end{cases}
    \end{equation} 
 	
 	The existence of at least one positive ground state solution of 
 	\eqref{eq:asc} was recently proved in \cite{Ambrosio2018} under standard 
 	assumptions on $f$ including sub critical growth, 
 	for details see  $(f_1)-(f_5)$ in Section 
 	\ref{DE} and \cite{Ambrosio2018} 
 	where also other references for existence results can 
	be found.   
    
	\medskip

    In our last main results, we prove that the positive ground state solutions 
    of \eqref{eq:asc} also satisfies \eqref{aaa} for large $|x|$ large.
    
     \begin{theorem}\label{theorem:decay2}
	    Let $sp<N$ and suppose that $f$ verifies $(f_1)-(f_5).$
	    If $v$ is a positive ground state solutions of \eqref{eq:asc}
	    there is $k>>1$ such that 
	     \[
            C_1|x|^{-\frac{N+sp}{p-1}}\le v(x)\le 
            C_2|x|^{-\frac{N+sp}{p-1}}\quad \forall |x|>k
        \]
        for some positive constant $C_1$ and $C_2.$ 
    \end{theorem}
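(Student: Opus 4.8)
The strategy is to reduce the Schrödinger problem \eqref{eq:asc} to a situation governed by the comparison estimates already established for the eigenvalue problem, and then invoke the sub/supersolution construction of Lemma \ref{lemma:decay}. First I would record the decay information that is already available for a positive ground state $v$ of \eqref{eq:asc}: under $(f_1)$--$(f_5)$ the nonlinearity $f$ is subcritical with $f(t)=o(t^{p-1})$ as $t\to 0^+$, and $v$ is (by the results of Section \ref{HR} and of \cite{Ambrosio2018}) bounded, Hölder continuous, and satisfies $v(x)\to 0$ as $|x|\to\infty$. Consequently, given any $\eta>0$ there is $R_\eta$ such that $f(v(x))\le \eta\, v(x)^{p-1}$ for $|x|>R_\eta$. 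Choosing $\eta<\mu$, the equation in \eqref{eq:asc} shows that outside a large ball
\begin{equation}
(-\Delta_p)^s v(x) = f(v(x)) - \mu v(x)^{p-1} \le (\eta-\mu)\, v(x)^{p-1} \le -\delta_0\, v(x)^{p-1}
\end{equation}
for some $\delta_0>0$, i.e. $v$ is a (weak) subsolution of $(-\Delta_p)^s w + \delta_0\, w^{p-1}\le 0$ in $\mathbb{R}^N\setminus B_{R_\eta}$. This is exactly the structural inequality satisfied by the first eigenfunction in Theorem \ref{theorem:decay1}, with $g(x)\le -\delta<0$ at infinity playing the role of $-\delta_0$; so the proof of the decay bound should run in complete parallel.

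Next I would set up the barrier argument with the power-type functions furnished by Lemma \ref{lemma:decay}. That lemma computes $(-\Delta_p)^s$ of a function behaving like $|x|^{-\frac{N+sp}{p-1}}$ at infinity and shows it is, up to constants, comparable to $|x|^{-\frac{p(N+sp)}{p-1}+ \text{(lower order)}}$ near infinity — in particular negligible compared with the zeroth-order term $\delta_0\,(|x|^{-\frac{N+sp}{p-1}})^{p-1} = \delta_0\,|x|^{-(N+sp)}$ only when paired correctly; the point is that $w_\pm(x) = C_\pm (1+|x|)^{-\frac{N+sp}{p-1}}$ (suitably truncated/modified inside a ball so as to lie in $\wspr$) are respectively a supersolution and a subsolution of the relevant inequality in an exterior region, once $C_+$ is taken large and $C_-$ small and $k$ large. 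For the upper bound: on $\partial B_k$ we have $v\le \max_{\partial B_k} v \le C_+ k^{-\frac{N+sp}{p-1}}$ by continuity, and $v\le w_+$ also holds in $B_k$ trivially if $C_+$ is large enough and in all of $B_k^c$ after comparison; then the weak comparison principle for $(-\Delta_p)^s + \delta_0(\cdot)^{p-1}$ on the exterior domain (monotone operator, so comparison holds) gives $v\le w_+$ on $B_k^c$, which is the right-hand inequality in \eqref{aaa}. For the lower bound one argues symmetrically: since $v>0$ is continuous and $B_k^c$-decay is what we want to establish, pick $C_-$ small enough that $w_-\le v$ on $\partial B_k$ and $w_-$ is a subsolution in $B_k^c$ (here one also uses $v>0$ bounded below on the compact sphere), and conclude $w_-\le v$ by comparison.

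The main obstacle, as in Theorem \ref{theorem:decay1}, is making the comparison rigorous despite the \emph{nonlocal} nature of the operator: $(-\Delta_p)^s v$ on the exterior domain depends on the values of $v$ everywhere, including inside $B_k$, so one cannot simply "solve on the annulus." The standard fix — which I would follow — is to compare $v$ with $w_\pm$ using test functions supported in $B_k^c$ and to absorb the "tail" contributions: the key inequality is that for the subsolution $w_-$ one needs $(-\Delta_p)^s w_- \ge -\delta_0 w_-^{p-1}$ computed with the \emph{full} kernel, and Lemma \ref{lemma:decay} is precisely designed to control this, including the long-range interaction with the bounded region. One must also check that the truncated barriers genuinely belong to $\wspr$ (finite Gagliardo seminorm), which holds since $\frac{N+sp}{p-1}>\frac{N-sp}{p}$ is equivalent to $sp<N$... wait, one should double-check the exponent condition guaranteeing $w_\pm\in\wspr$; this is where the hypothesis $sp<N$ (rather than $sp\ge N$ as in Theorem \ref{theorem:decay1}) enters and it is exactly the borderline integrability computation. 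A secondary technical point is that $f(v)$ near infinity must be dominated by $v^{p-1}$ with a constant strictly less than $\mu$; this uses $(f_1)$ ($\lim_{t\to0}f(t)/t^{p-1}=0$) together with $v\to 0$, so it is genuinely automatic. Once these points are in place, the estimate \eqref{aaa} for $v$ follows verbatim from the argument proving Theorem \ref{theorem:decay1}, and I would in fact phrase the proof as "repeat the proof of Theorem \ref{theorem:decay1} with $g_2$ replaced by the effective potential $\mu - f(v)/v^{p-1}$, which is $\ge \delta_0>0$ at infinity."
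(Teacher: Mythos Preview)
Your plan is essentially the paper's own proof: use $(f_2)$ together with $v(x)\to 0$ to get $f(v(x))\le\tfrac{\mu}{2}v(x)^{p-1}$ for $|x|$ large, so that $(-\Delta_p)^s v+\tfrac{\mu}{2}v^{p-1}\le 0\le(-\Delta_p)^s v+\mu v^{p-1}$ outside a ball, and then run the barrier--comparison argument of Theorem~\ref{theorem:decay1} with Lemma~\ref{lemma:decay} and Theorem~\ref{theorem:comparison}. Two small corrections: the condition $\lim_{t\to 0}f(t)/t^{p-1}=0$ is $(f_2)$, not $(f_1)$; and your aside that ``$sp<N$ enters through the integrability of the barrier'' is off---the profile $\Upsilon\sim|x|^{-(N+sp)/(p-1)}$ lies in $\wspr$ for every $s,p$ (indeed the same barrier is used in Theorem~\ref{theorem:decay1} where $sp\ge N$), and $sp<N$ matters here only because the existence result of \cite{Ambrosio2018} requires it. Also, for the \emph{lower} bound you need the second inequality $(-\Delta_p)^s v+\mu v^{p-1}\ge 0$, which comes from $f(v)\ge 0$ (a consequence of $(f_4)$); you invoke ``symmetry'' but never record this, and in one line you write $(-\Delta_p)^s w_-\ge -\delta_0 w_-^{p-1}$ while calling $w_-$ a subsolution---the sign should be reversed.
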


	\bigskip
	
	To end this introduction, we want to mention that, as far as we know, 
	the main results of this work are new also in the linear case $p=2$
	that corresponds to the fractional Laplacian.	
	
	\bigskip

	The rest of this paper is organized as follows: in Section \ref{NyP}, 
	we introduce the the notation a preview some preliminaries. 
	In Section \ref{sp<N} (resp, Section \ref{sp>N}), we study the case $sp<N$ 
	(resp. $sp\geq N$).  
	In Section \ref{HR}, we obtained the H\"older regularity.  
	In Section \ref{PE}, we study the principal eigenvalue. Finally, in Section 
	\ref{DE}, we prove the decay estimates.

\section{Notations and preliminaries}\label{NyP}
    For the benefit of the reader, we start by including the 
    basic tools    that will be needed in subsequent sections. 
    The known results are generally stated without proofs, 
    but we provide references where they can be found. 
    In additional, we take this opportunity to introduce 
    some of our notational conventions.
    
    \medskip
\subsection{Sobolev spaces}    
    Let $\Omega$ be an open subset of $N-$dimensional euclidean
    space $\mathbb{R}^N.$ Let $\cio$ denote the space 
    of infinitely differentiable functions on $\Omega;$ by 
    $\ccir$ we denote the space of functions in
    $\cir$ with compact support on $\Omega.$
    
    \medskip
    
    Let $1\le p\le\infty$ and $\lpo$ be the space of 
    Lebesgue measurable functions $u$ on $\Omega,$ such that
    \[
        \|u\|_{\lpo}\coloneqq
        \begin{cases}
	        \left(\displaystyle\int_{\Omega}
	        |u(x)|^p\,dx\right)^{\nicefrac1p}  
	        &\text{if } 1\le p<\infty,\\[9pt]
	        \inf \{ M \colon |u(x)| \le M  \text{ for almost every } x \} 
	        &\text{if } p=\infty,\\
        \end{cases}
    \] 
    is finite. If $\Omega=\mathbb{R}^N,$ we simply use the notation 
    $\|u\|_p$
    instead of $\|u\|_{\scriptstyle L^p(\mathbb{R}^N)}.$ 
    
    Let $0<s<1$ and $1< p<\infty.$ The fractional Sobolev spaces $\wspo$
	is defined to be the set of functions $u\in\lpo$ such that
		\[
			|u|_{\wspo}^p\coloneqq
			\int_{\Omega}\int_{\Omega}
			\dfrac{|u(x)-u(y)|^p}{|x-y|^{N+sp}}\, dxdy<\infty.
		\]
	The fractional Soblev spaces admit the following norm
		\[
			\|u\|_{\wspo}\coloneqq
			\left(\|u\|_{\lpo}^p+|u|_{\wspo}^p
			\right)^{\frac1p}.
		\]
	
	The space $\wspo$ endowed with the norm $\|\cdot\|_{\wspo}$ is 
	a reflexive Banach space.
    We denote by $\twspo$ the space of all $u\in\wspo$ such that
	$\tilde{u}\in\wspr,$ where $\tilde{u}$ is the extension by zero of 
	$u.$ 	
	\medskip
		
	The closure of $\ccir$ with respect to the
		norm
		\[
			\llbracket u\rrbracket_{s,p}^p\coloneqq
			\int_{\mathbb{R}^N}\int_{\mathbb{R}^N}
			\dfrac{|u(x)-u(y)|^p}{|x-y|^{N+sp}}\, dxdy
		\]
	is denoted by $\wsprc.$
	
	\medskip
	
	In the case $sp<N,$ we consider the spaces
	\[
	    \mathcal{X}_0\!\prescript{s,p}{}(\Omega)\coloneqq
        \left\{u\in\lps\colon u\equiv 0\text{ in } 
        \mathbb{R}^N\setminus\Omega, \int_{\mathbb{R}^N}\int_{\mathbb{R}^N}
	    \dfrac{|u(x)-u(y)|^p}{|x-y|^{N+sp}}\, dxdy<\infty\right\},
	\]
	and 
	\[
	    \mathcal{X}\!\prescript{s,p}{}(\mathbb{R}^N)\coloneqq
        \left\{u\in\lps\colon \int_{\mathbb{R}^N}\int_{\mathbb{R}^N}
			\dfrac{|u(x)-u(y)|^p}{|x-y|^{N+sp}}\, dxdy<\infty\right\},
	\]
	 where $p_s^*=\dfrac{pN}{N-sp}.
	 $
    \medskip
    
    The proof of the following results can be found in 
    \cite[page 521]{Mazya}.   

    \begin{theorem}\label{theorem:Mazya}
        If $sp<N,$ then for an arbitrary
        function $u\in\wsprc$ there holds
        \[
            \|u\|_{p_s^*}^p\le C(N,p)\dfrac{s(1-s)}{(N-sp)^{p-1}}\llbracket 
            u\rrbracket_{s,p}^p
        \]
        where $C(N,p)$ is a function of 
        $N$ and $p.$
    \end{theorem}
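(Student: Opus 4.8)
The plan is to reduce to a dense, nonnegative class and then run a Maz'ya-style level-set decomposition. Since $\wsprc$ is the $\llbracket\cdot\rrbracket_{s,p}$-closure of $\ccir$ and $\llbracket |u|\rrbracket_{s,p}\le\llbracket u\rrbracket_{s,p}$, it suffices to prove the inequality for $0\le u\in\ccir$. Fix $\beta>1$ (to be optimised at the end), set $A_k:=\{u>\beta^k\}$ and $a_k:=|A_k|$ for $k\in\mathbb{Z}$, and truncate
\[
    u_k:=\min\{(u-\beta^k)_+,\,(\beta-1)\beta^k\},
\]
so that $u=\sum_k u_k$ pointwise. Each $u_k$ is a nondecreasing function of the value of $u$, hence all increments $u_k(x)-u_k(y)$ carry the sign of $u(x)-u(y)$, so $|u(x)-u(y)|=\sum_k|u_k(x)-u_k(y)|$ and, by superadditivity of $t\mapsto t^p$ on $[0,\infty)$, $\sum_k\llbracket u_k\rrbracket_{s,p}^p\le\llbracket u\rrbracket_{s,p}^p$.

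The heart of the argument is a lower bound for $\llbracket u_k\rrbracket_{s,p}^p$ in terms of the numbers $a_j$. I would keep two contributions. First, the ``jump'': since $u_k\equiv(\beta-1)\beta^k$ on $A_{k+1}$ and $u_k\equiv 0$ on $\mathbb{R}^N\setminus A_k$,
\[
    \llbracket u_k\rrbracket_{s,p}^p\ \ge\ 2(\beta-1)^p\beta^{kp}\int_{A_{k+1}}\Big(\int_{\mathbb{R}^N\setminus A_k}\frac{dy}{|x-y|^{N+sp}}\Big)\,dx ,
\]
and for each $x\in A_{k+1}\subset A_k$ the bathtub principle gives $\int_{\mathbb{R}^N\setminus A_k}|x-y|^{-N-sp}\,dy\ge \tfrac{c(N)}{sp}\,a_k^{-sp/N}$, the extremal configuration being the complement of the ball of volume $a_k$ centred at $x$; this yields $\llbracket u_k\rrbracket_{s,p}^p\ge \tfrac{c(N)(\beta-1)^p}{sp}\,\beta^{kp}\,a_{k+1}\,a_k^{-sp/N}$. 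Second, to produce the factor $1-s$ I would also retain the pairs $x,y$ lying in the transition layer $A_k\setminus A_{k+1}$, where $u_k=u-\beta^k$; bounding that part by a local fractional Poincar\'e inequality on the layer and integrating the kernel over $|x-y|\le 1$ produces the extra $\tfrac1{p(1-s)}$.

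It remains to sum. The layer-cake formula gives $\|u\|_{p_s^*}^{p_s^*}=p_s^*\int_0^\infty t^{p_s^*-1}|\{u>t\}|\,dt\le(\beta^{p_s^*}-1)\sum_k\beta^{kp_s^*}a_k$; since $\tfrac{p}{p_s^*}=\tfrac{N-sp}{N}\in(0,1)$, raising this to the power $\tfrac{p}{p_s^*}$ and using $(\beta^{p_s^*}-1)^{p/p_s^*}\le\beta^p$ together with the subadditivity $(\sum b_k)^{p/p_s^*}\le\sum b_k^{p/p_s^*}$ bounds $\|u\|_{p_s^*}^p$ by $\beta^p\sum_k\beta^{kp}a_k^{p/p_s^*}$. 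Comparing this upper bound term by term with the lower bound $\llbracket u\rrbracket_{s,p}^p\ge\sum_k\llbracket u_k\rrbracket_{s,p}^p$ estimated as above, and optimising in $\beta$ (e.g. $\beta=2$), should give the stated inequality with a constant of the form $C(N,p)\,\frac{s(1-s)}{(N-sp)^{p-1}}$.

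The delicate point is exactly this comparison: the per-$k$ lower bound couples $a_{k+1}$ with a negative power of $a_k$, so consecutive level sets are mismatched, and closing the sum requires a discrete Hardy-type inequality whose constant degenerates precisely like $(N-sp)^{-(p-1)}$ as $sp\uparrow N$ — that is where this factor of the statement originates. A cleaner route, which however only settles a model case, would be to apply the fractional P\'olya--Szeg\H{o} inequality $\llbracket u^\ast\rrbracket_{s,p}\le\llbracket u\rrbracket_{s,p}$ to reduce to radially decreasing $u$ and, for $p=1$, to use the coarea identity $\llbracket u\rrbracket_{s,1}=\int_0^\infty P_s(\{u>t\})\,dt$ (with $P_s$ the $s$-perimeter) together with the sharp fractional isoperimetric inequality $P_s(E)\ge c(N,s)|E|^{(N-s)/N}$, whose constant already carries the $s(1-s)$ behaviour; passing from there to general $p$ is obstructed by the incompatibility of the kernels $|x-y|^{-N-s}$ and $|x-y|^{-N-sp}$.
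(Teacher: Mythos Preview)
The paper does not prove this statement: it is quoted from \cite[p.~521]{Mazya} without argument, so there is nothing in the paper to compare your proposal against.

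Your outline is the skeleton of Maz'ya's level-set proof, and the set-up is correct: the reduction to $0\le u\in\ccir$, the truncations $u_k$ with $\sum_k u_k=u$ and $\sum_k\llbracket u_k\rrbracket_{s,p}^p\le\llbracket u\rrbracket_{s,p}^p$, and the bathtub bound $\int_{A_k^c}|x-y|^{-N-sp}\,dy\ge\tfrac{c(N)}{sp}a_k^{-sp/N}$ (which is where the factor $s$ in the final constant comes from). The gap you flag is real and is not repaired by what you write: since $a_k\ge a_{k+1}$ one has $a_{k+1}a_k^{-sp/N}\le a_{k+1}^{(N-sp)/N}$, so your lower bound $\sum_k\beta^{kp}a_{k+1}a_k^{-sp/N}$ and your upper bound $\sum_k\beta^{kp}a_k^{(N-sp)/N}$ are ordered the \emph{wrong} way for a termwise or index-shifted comparison, and the ``discrete Hardy-type inequality whose constant degenerates like $(N-sp)^{-(p-1)}$'' that you invoke to bridge them is neither stated nor, as far as I can see, available in that form. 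Your mechanism for the factor $(1-s)$ (``local fractional Poincar\'e on the transition layer'') is likewise a placeholder rather than an argument. In Maz'ya's treatment the precise dependence $s(1-s)(N-sp)^{1-p}$ comes from a sharper organisation of the kernel integrals than the coarse jump bound you keep; your sketch, once the summation is fixed, would at best yield the inequality with some unspecified $C(N,s,p)$, not the explicit constant in the statement.
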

    
    In other words, if $sp<N$ then
    $\wsprc\subseteq\mathcal{X}\!\prescript{s,p}{}(\mathbb{R}^N).$
    In fact, adapting ideas of the proofs  of Proposition 4.27 in 
    \cite{DD} and 
    Proposition 2.4 in \cite{MR1101219} we can proof the following result.

    \begin{theorem}\label{theorm:c}
        If $sp<N$ then $\wsprc=\mathcal{X}\!\prescript{s,p}{}(\mathbb{R}^N).$
    \end{theorem}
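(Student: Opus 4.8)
The plan is to prove the two inclusions separately. The inclusion $\wsprc\subseteq\mathcal{X}\!\prescript{s,p}{}(\mathbb{R}^N)$ is soft: if $u\in\wsprc$ then $u$ is the $\llbracket\,\cdot\,\rrbracket_{s,p}$--limit of some $\{u_n\}\subseteq\ccir$; by Theorem \ref{theorem:Mazya} this sequence is also Cauchy in $\lps$, hence converges there to (a representative of) $u$, and passing to a subsequence converging a.e.\ and applying Fatou's lemma to $|u_n(x)-u_n(y)|^p|x-y|^{-N-sp}$ gives $\llbracket u\rrbracket_{s,p}^p\le\liminf_n\llbracket u_n\rrbracket_{s,p}^p<\infty$ together with $\llbracket u-u_n\rrbracket_{s,p}\to0$; thus $u\in\mathcal{X}\!\prescript{s,p}{}(\mathbb{R}^N)$. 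The same argument shows that $(\mathcal{X}\!\prescript{s,p}{}(\mathbb{R}^N),\llbracket\,\cdot\,\rrbracket_{s,p})$ is a Banach space (here $\llbracket u\rrbracket_{s,p}=0$ forces $u$ constant, hence $u\equiv0$ since $u\in\lps$), so $\wsprc$ is exactly the $\llbracket\,\cdot\,\rrbracket_{s,p}$--closure of $\ccir$ inside $\mathcal{X}\!\prescript{s,p}{}(\mathbb{R}^N)$, and the reverse inclusion is precisely the statement that $\ccir$ is dense in that space.

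To prove this density, fix $u\in\mathcal{X}\!\prescript{s,p}{}(\mathbb{R}^N)$; I combine a spatial truncation with a mollification. Let $\eta\in\ccir$ satisfy $0\le\eta\le1$, $\eta\equiv1$ on $B_1$ and $\eta\equiv0$ outside $B_2$ (where $B_r$ is the ball of radius $r$ centred at the origin), and set $\eta_R(x)=\eta(x/R)$, $v_R=\eta_R-1$, $u_R=\eta_R u$. From the identity
\[
u_R(x)-u_R(y)-\bigl(u(x)-u(y)\bigr)=u(x)\bigl(v_R(x)-v_R(y)\bigr)+v_R(y)\bigl(u(x)-u(y)\bigr)
\]
and $|a+b|^p\le2^{p-1}(|a|^p+|b|^p)$ one obtains $\llbracket u_R-u\rrbracket_{s,p}^p\le2^{p-1}\bigl(J_1(R)+J_2(R)\bigr)$, where
\[
J_2(R)=\int_{\mathbb{R}^N}\int_{\mathbb{R}^N}\frac{|v_R(y)|^p\,|u(x)-u(y)|^p}{|x-y|^{N+sp}}\,dx\,dy,\qquad J_1(R)=\int_{\mathbb{R}^N}|u(x)|^p\,G_R(x)\,dx,
\]
and $G_R(x)=\int_{\mathbb{R}^N}|v_R(x)-v_R(y)|^p|x-y|^{-N-sp}\,dy$. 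Since $|v_R|\le1$ everywhere and $v_R\to0$ pointwise as $R\to\infty$, dominated convergence with majorant $|u(x)-u(y)|^p|x-y|^{-N-sp}\in L^1(\mathbb{R}^N\times\mathbb{R}^N)$ yields $J_2(R)\to0$.

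The term $J_1(R)$ is the crux. The scaling $v_R(x)=v_1(x/R)$ gives $G_R(x)=R^{-sp}G_1(x/R)$; since $\eta\in\ccir$, $G_1$ is bounded, and since $\eta\equiv0$ outside $B_2$ one has $G_1(\xi)\le C|\xi|^{-N-sp}$ for $|\xi|$ large, whence $G_1\in L^{N/(sp)}(\mathbb{R}^N)$, the change of variables $x=R\xi$ shows $\|G_R\|_{L^{N/(sp)}(\mathbb{R}^N)}=\|G_1\|_{L^{N/(sp)}(\mathbb{R}^N)}$ is independent of $R$, and $\|G_R\|_{\lir}\le CR^{-sp}$. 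A crude estimate gives only $J_1(R)\le C\|u\|_{p_s^*}^p$ uniformly in $R$ (the exponents cancel exactly), so instead I split $J_1(R)$ over $\{|x|\le\sqrt{R}\,\}$ and $\{|x|>\sqrt{R}\,\}$. On the first set, Hölder's inequality with exponents $p_s^*/p$ and $N/(sp)$ together with the $\lir$--bound on $G_R$ gives a contribution $\le C\|u\|_{p_s^*}^p\,R^{-sp}\,|B_{\sqrt{R}}|^{sp/N}=C'\|u\|_{p_s^*}^p\,R^{-sp/2}\to0$; on the second set the same Hölder inequality gives $\le\|u\,\chi_{\{|x|>\sqrt{R}\}}\|_{p_s^*}^p\,\|G_1\|_{L^{N/(sp)}}\to0$ because $\int_{|x|>\sqrt{R}}|u|^{p_s^*}\,dx\to0$. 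Hence $J_1(R)\to0$, so $\llbracket u_R-u\rrbracket_{s,p}\to0$, and each $u_R$ is supported in $B_{2R}$ with finite seminorm.

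Finally, any compactly supported $v\in\mathcal{X}\!\prescript{s,p}{}(\mathbb{R}^N)$ belongs to $\lpr$ (since it has compact support and $p<p_s^*$), so its mollifications $v\ast\rho_\varepsilon$ lie in $\ccir$, and $\llbracket v\ast\rho_\varepsilon-v\rrbracket_{s,p}\le\sup_{|h|\le\varepsilon}\llbracket\tau_h v-v\rrbracket_{s,p}$ by Minkowski's integral inequality (here $\tau_h$ is translation by $h$); writing $\llbracket\tau_h v-v\rrbracket_{s,p}^p=\int_{\mathbb{R}^N}\|\tau_h\psi_z-\psi_z\|_p^p\,|z|^{-N-sp}\,dz$ with $\psi_z(x)=v(x)-v(x-z)$, and using continuity of translations in $\lpr$ together with dominated convergence in the $z$ variable, shows $\llbracket v\ast\rho_\varepsilon-v\rrbracket_{s,p}\to0$. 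Applying this to $v=u_R$ and choosing first $R$ large and then $\varepsilon$ small, every $u\in\mathcal{X}\!\prescript{s,p}{}(\mathbb{R}^N)$ is a $\llbracket\,\cdot\,\rrbracket_{s,p}$--limit of functions in $\ccir$, i.e.\ $\mathcal{X}\!\prescript{s,p}{}(\mathbb{R}^N)\subseteq\wsprc$, which together with the first paragraph finishes the proof. The step I expect to be the main obstacle is the treatment of $J_1(R)$: its natural scaling makes it bounded but not small, and the splitting at radius $\sqrt{R}$ — balancing the $\lir$--decay of $G_R$ against the $\lps$--tail decay of $u$ — is what forces the convergence.
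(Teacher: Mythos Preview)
Your proof is correct and follows the same overall scheme as the paper --- cutoff followed by smoothing --- but the execution differs in useful ways.

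For the cutoff step, the paper shows first that $u_n=\varphi_n u\in\wspr$ (its Step~1, a rather lengthy computation splitting the double integral into several regions $B_n$, $B_n^c\cap B_{2n}$, $B_{2n}^c$ and applying H\"older repeatedly), then proves $\llbracket u_n-u\rrbracket_{s,p}\to0$ (Step~2) via further region-by-region estimates. Your treatment is more conceptual: you isolate the kernel $G_R(x)=R^{-sp}G_1(x/R)$, observe that $\|G_R\|_{L^{N/(sp)}}$ is scale-invariant while $\|G_R\|_{\infty}\le CR^{-sp}$, and then balance these against the $\lps$-tail of $u$ via the $\sqrt{R}$-split. This is both shorter and makes transparent why the critical term $J_1(R)$, which is only bounded under naive scaling, actually tends to zero.

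For the smoothing step, the paper does \emph{not} mollify directly: having established $u_n\in\wspr$ in Step~1, it simply invokes the known density of $\ccir$ in $\wspr$ (its Step~3). Your route is self-contained: you note that the compactly supported truncation lies in $\lpr$, mollify, and prove $\llbracket v\ast\rho_\varepsilon-v\rrbracket_{s,p}\to0$ by rewriting the seminorm as $\int|z|^{-N-sp}\|\tau_h\psi_z-\psi_z\|_p^p\,dz$ and using continuity of translations in $\lpr$ plus dominated convergence in $z$. This avoids the paper's separate Step~1 entirely.

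In short: the paper trades a black-box density result for an extra page of computation, while your argument bypasses that computation and supplies the density directly. Both work; yours is the more economical of the two.
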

    
    \begin{proof}
	    By Theorem \ref{theorem:Mazya}, we only need to show that 
	    $\mathcal{X}\!\prescript{s,p}{}(\mathbb{R}^N)\subseteq\wsprc.$ 
	    
	    Let $u\in \mathcal{X}\!\prescript{s,p}{}
	    (\mathbb{R}^N)$ and $\varphi\in C^\infty_0(\mathbb{R}^N)$
	    be such that
	    \begin{equation}\label{eq:CroNig}
	        0\le\varphi\le1 \text{ in }\mathbb{R}^N,  \varphi(x)=1 
	        \text{ if } |x|\le1
	        \text{ and } \varphi(x)=0 \text{ if } |x|\ge2.
        \end{equation}
	    We define $\varphi_n(x)\coloneqq\varphi(\nicefrac{x}{n})$ and 
	    $u_n(x)=\varphi_n(x)u(x).$ 
	    
	   \medskip
	    
	   \noindent{\it Step 1.} We claim that $u_n\in \wspr.$
	   
	   Since $\varphi$ has compact support and $u\in\lps,$ we have that $u_n\in\lpr.$
	   Therefore we only need to show that 
	   \begin{equation}\label{eq:ArgIsl}
	        \int_{\mathbb{R}^N}\int_{\mathbb{R}^N}
			\dfrac{|u_n(x)-u_n(y)|^p}{|x-y|^{N+sp}}\, dxdy<\infty.
       \end{equation}
	   
	   Observe that
	   \begin{align*}
	        &\int_{\mathbb{R}^N}\int_{\mathbb{R}^N}
			\dfrac{|u_n(x)-u_n(y)|^p}{|x-y|^{N+sp}}\, dxdy\\
			&\le C\left(
	        \int_{\mathbb{R}^N}\int_{\mathbb{R}^N}
			\dfrac{|u(x)-u(y)|^p}{|x-y|^{N+sp}}\, dxdy
			+\int_{\mathbb{R}^N}\int_{\mathbb{R}^N}
			\dfrac{|\varphi_n(x)-\varphi_n(y)|^p |u(x)|^p}{|x-y|^{N+sp}}\, dxdy\right).
       \end{align*}
       Then, to prove \eqref{eq:ArgIsl} it suffices to show that
       the second term on the right-hand sides of the above inequality 
       is finite.
       
       Let $B_n=\{x\in\mathbb{R}^N\colon |x|<n\}$ and 
       $B_n^c=\mathbb{R}^N\setminus B_n.$
       Then
       \begin{align*}
	        &\int_{\mathbb{R}^N}\int_{\mathbb{R}^N}
			\dfrac{|\varphi_n(x)-\varphi_n(y)|^p |u(x)|^p}{|x-y|^{N+sp}}
			\, dxdy=\\
			&\int_{B_n}\int_{B_n^c}
			\dfrac{|\varphi_n(x)-1|^p |u(x)|^p}{|x-y|^{N+sp}}\, dxdy+
			\int_{B_n^c}\int_{\mathbb{R}^N}
			\dfrac{|\varphi_n(x)-\varphi_n(y)|^p |u(x)|^p}{|x-y|^{N+sp}}
			\, dxdy\\
			&=I_1+I_2.
       \end{align*}
       By \eqref{eq:CroNig} and H\"older's inequality, we get
       \begin{align*}
	        I_1\le&C
	        \left\{\int_{B_n^c\cap B_{2n}}\dfrac{|\varphi_n(x)-1|^p |u(x)|^p}{(|x|-n)^{sp}}\, 
	        dx\right.\\
	        &\quad\left.+\|u\|_{p_s^*}^p\int_{B_n}
	         \left(\int_{B_{2n}^c}
	        \dfrac{dx}{|x-y|^{N+\nicefrac{N^2}{sp}}}\right)^{\nicefrac{sp}N}dy
	        \right\}\\
	        \le& C\|u\|_{p_s^*}^p\left\{
	        \left(\int_{B_n^c\cap B_{2n}}
	        \dfrac{|\varphi_n(x)-\varphi_n(\nicefrac{nx}{|x|})|
	        ^{\nicefrac{N}s}}{(|x|-n)^{N}}\, dx\right)^{\nicefrac{sp}N}
	        +1
	        \right\}\\
	        \le& C\|u\|_{p_s^*}^p\left\{
	        \left(\int_{B_n^c\cap B_{2n}}
	        \|\nabla \varphi_n\|_\infty^{\nicefrac{N}s-N}
	        (|x|-n)^{\nicefrac{N}s-N}\, dx\right)^{\nicefrac{sp}N}
	        +1
	        \right\}\\
	        <&\infty.
       \end{align*}
        
       On the other hand, again by \eqref{eq:CroNig} and H\"older's inequality,
       \begin{align*}
	        &I_2=\int_{B_n^c}\int_{B_{2n}}
			\dfrac{|\varphi_n(x)-\varphi_n(y)|^p |u(x)|^p}{|x-y|^{N+sp}}\, dxdy
			+\int_{B_n^c}\int_{B_{2n}^c}
			\dfrac{|\varphi_n(y)|^p |u(x)|^p}{|x-y|^{N+sp}}\, dxdy\\
	        &\le\int_{B_{2n}}\int_{B_{2n}}
			\dfrac{|\varphi_n(x)-\varphi_n(y)|^p |u(x)|^p}{|x-y|^{N+sp}}\, dxdy
			+\int_{B_{2n}^c}\int_{B_{2n}}
			\dfrac{|\varphi_n(x)|^p |u(x)|^p}{|x-y|^{N+sp}}\, dxdy\\
			&\qquad+\int_{B_{2n}}\int_{B_{2n}^c}
			\dfrac{|\varphi_n(y)|^p |u(x)|^p}{|x-y|^{N+sp}}\, dxdy\\
			&\le C\left\{\int_{B_{2n}}\int_{B_{2n}}
			|x-y|^{p(1-s)-N} |u(x)|^p\, dxdy +\int_{B_{2n}}
			\dfrac{|\varphi_n(x)|^p |u(x)|^p}{(2n-|x|)^{sp}}dx\right\}\\
			&+\|u\|_{p_s^*}^p\int_{B_{2n}}|\varphi_n(y)|^p\left(\int_{B_{2n}^c}
			\dfrac{dx}{|x-y|^{N+\nicefrac{N^2}{sp}}}\right)^{\nicefrac{sp}{N}}dy
			\\
			&\le C\left\{\|u\|_{p_s^*}^p+\int_{B_{2n}}
			(2n-|x|)^{p(1-s)} |u(x)|^p dx
			+\int_{B_{2n}}\dfrac{|\varphi_n(y)|^p}{(2n-|y|)^N} dy
			\|u\|_{p_s^*}^p\right\}\\
			&\le C\left\{\|u\|_{p_s^*}^p
			+\int_{B_{2n}}(2n-|y|)^{p-N} dy
			\|u\|_{p_s^*}^p\right\}\\
			&<\infty.
       \end{align*}       
       Therefore $I_1+I_2<\infty$ then \eqref{eq:ArgIsl} holds.
       
       \medskip
        
       \noindent{\it Step 2.} We now claim that 
       $\llbracket u_n-u\rrbracket_{s,p}\to 0.$
       
	   It is clear that $u_n\to u$ strongly in $\lps.$ 
	   Our first step is to prove that  $\llbracket u_n -u\rrbracket_{s,p}\to0$ 
	   as $n\to\infty.$
	   Observe that
	   \begin{equation*}
	     \begin{aligned}
	            \llbracket u_n -u\rrbracket_{s,p}^p&= 2\int_{B_n}\int_{B_n^c} 
	            \dfrac{v_n(x,y)}{|x-y|^{N+sp}}dydx
	                +\int_{B_n^c}\int_{B_n^c}
	                \dfrac{v_n(x,y)}{|x-y|^{N+sp}}dydx\\
	                &= 2J_n^1+J_n^2,
            \end{aligned}   
        \end{equation*}
	    where $v_n(x,y)=|(u_n(x)-u(x))-(u_n(y)-u(y))|^p.$ 
	    Then, we only need to show that
	    $J_n^1$ and $J_n^2$ converge to $0.$
	    
	    Let's prove that $J_n^1\to 0$ as $n\to\infty.$ 
	    By \eqref{eq:CroNig} and H\"older's inequality
	    \begin{align*}
	        J_n^1&\le\int_{B_{n}^c\cap B_{2n}}|1-\varphi_n(y)|^p|u(y)|^p\int_{B_n}
	        \dfrac{dxdy}{|x-y|^{N+sp}}
	        +\int_{B_n}\int_{B_{2n}^c} 
	            \dfrac{|u(y)|^p}{|x-y|^{N+sp}}dydx
	        \\
	        &\le C(N,s,p)\int_{B_{n}^c\cap B_{2n}}
	        \dfrac{|\varphi_n(\nicefrac{ny}{|y|})-\varphi_n(y)|^p|u(y)|^p}{(|y|-n)^{sp}}dy\\
	         &\qquad+\int_{B_n}\left(\int_{B_{2n}^c} 
	            \dfrac{dy}{|x-y|^{N+\nicefrac{N^2}{sp}}}\right)^{\nicefrac{sp}N}dx
	            \left(\int_{B_n^c}
	        |u(y)|^{p_s^*}dy\right)^{\frac{N-sp}N}\\
	        &\le \dfrac{C(N,s,p,\varphi)}{n^p}\int_{B_{n}^c\cap B_{2n}}
	        (|y|-n)^{p(1-s)}|u(y)|^pdy\\
	         &\qquad+ C(N,s,p)\int_{B_n}\dfrac{dx}{(2n-|x|)^N}
	            \left(\int_{B_n^c}
	        |u(y)|^{p_s^*}dy\right)^{\frac{N-sp}N}\\
	        &\le \dfrac{C(N,s,p,\varphi)}{n^p}\left(\int_{B_{2n}}
	        (|y|-n)^{\frac{N}s(1-s)}dy\right)^{\nicefrac{sp}N}\left(\int_{B_n^c}
	        |u(y)|^{p_s^*}dy\right)^{\frac{N-sp}N}\\
	         &\qquad+ C(N,s,p)
	            \left(\int_{B_n^c}
	        |u(y)|^{p_s^*}dy\right)^{\frac{N-sp}N},
        \end{align*}
        by a simple change of variable 
        \begin{align*}
	        J_n^1&\le C(N,s,p,\varphi)\left(\int_{B_{2}}
	        (|y|-1)^{\frac{N}s(1-s)}dy\right)^{\nicefrac{sp}N}\left(\int_{B_n^c}
	        |u(y)|^{p_s^*}dy\right)^{\frac{N-sp}N}\\
	         &\qquad+ C(N,s,p)
	            \left(\int_{B_n^c}
	        |u(y)|^{p_s^*}dy\right)^{\frac{N-sp}N}\\
	        &\le C(N,s,p,\varphi)\left(\int_{B_n^c}
	        |u(y)|^{p_s^*}dy\right)^{\frac{N-sp}N}\to 0 \text{ since } u\in \lps.
        \end{align*}
        
        Our next aim is to show that $J_n^2\to 0$ as $n\to\infty.$ Observe that for any
        $x,y\in B_n^c$ we have
        \[
            u_n(x)-u_n(y)=
            \begin{cases}
                \varphi_n(x)u_n(x) &\text{if }x\in B_n^c\cap B_{2n}, y\in B_{2n}^c,\\
               -\varphi_n(y)u_n(y) &\text{if }y\in B_n^c\cap B_{2n}, x\in B_{2n}^c,\\
               \varphi_n(x)u_n(x)-\varphi_n(x)u_n(y)&\text{if }x,y\in B_n^c\cap B_{2n},\\
               0&\text{if }x,y\in  B_{2n}^c.
            \end{cases}
        \]
        
        Then
        \begin{align*}
            J_n^2\le &C(p)\left\{\int_{B_{n}^c}\int_{B_{n}^c}	
            \dfrac{|u(x)-u(y)|^p}{|x-y|^{N+sp}}\, dxdy
            +2\int_{B_n^c\cap B_{2n}}\int_{B_{2n}^c}
            \dfrac{|\varphi_n(y)u(y)|^p}{|x-y|^{N+sp}}
            \, dxdy\right.\\
            &\int_{B_n^c\cap B_{2n}}\int_{B_n^c\cap B_{2n}}
            |\varphi_n(x)|^p\dfrac{|u(x)-u(y)|^p}{|x-y|^{N+sp}}\, dxdy\\
            &+\left.\int_{B_n^c\cap B_{2n}}\int_{B_n^c\cap B_{2n}}
             \dfrac{|\varphi_n(x)-\varphi_n(y)|^p|u(y)|^p}{|x-y|^{N+sp}}\, 
             dxdy\right\}\\
            &\le C(p)\left\{2\int_{B_{n}^c}\int_{B_{n}^c}	
            \dfrac{|u(x)-u(y)|^p}{|x-y|^{N+sp}}\, dxdy
            +2\int_{B_n^c\cap B_{2n}}\int_{B_{2n}^c}
            \dfrac{|\varphi_n(y)u(y)|^p}{|x-y|^{N+sp}}
            \, dxdy\right.\\
            &+\left.\int_{B_n^c\cap B_{2n}}\int_{B_n^c\cap B_{2n}}
             \dfrac{|\varphi_n(x)-\varphi_n(y)|^p|u(y)|^p}{|x-y|^{N+sp}}\, 
             dxdy\right\}. 
        \end{align*}
        Since $u\in \mathcal{X}\!\prescript{s,p}{}
	    (\mathbb{R}^N),$ by dominated convergence theorem, we get
	    \[
	        \int_{B_{n}^c}\int_{B_{n}^c}	
            \dfrac{|u(x)-u(y)|^p}{|x-y|^{N+sp}}\, 
            dxdy\to0\quad \text{as }n\to\infty.
	    \]
	    
	    On the other hand, by  H\"older's inequality
        \begin{align*}
	        \int_{B_n^c\cap B_{2n}}&\int_{B_{2n}^c}
	        \dfrac{|\varphi_n(y)u(y)|^p}{|x-y|^{N+sp}}
	        \, dxdy=C(N,s,p)\int_{B_n^c\cap B_{2n}}
	        \dfrac{|\varphi_n(y)u(y)|^p}{|2n-|y||^{sp}}
	       \, dy\\
	       \le&\dfrac{C(N,s,p,\varphi)}{n^p}\int_{B_n^c\cap B_{2n}}
	       |2n-|y||^{(1-s)p}|u(y)|^pdy \\
	       \le&C(N,s,p,\varphi)\left(\int_{B_n^c}
	        |u(y)|^{p_s^*}dy\right)^{\frac{N-sp}N}\to 0 \text{ since } u\in \lps.
        \end{align*}
        
        Finally
        \begin{align*}
	        \int_{B_n^c\cap B_{2n}}\int_{B_n^c\cap B_{2n}}&
             \dfrac{|\varphi_n(x)-\varphi_n(y)|^p|u(y)|^p}{|x-y|^{N+sp}}\, dxdy\\
             &\le \dfrac{C(\varphi)}{n^p}\int_{B_n^c\cap B_{2n}}\int_{B_n^c\cap B_{2n}}
             |x-y|^{p(1-s)-N}|u(y)|^p dxdy\\
             &\le \dfrac{C(N,s,p,\varphi)}{n^{sp}}\int_{B_n^c\cap B_{2n}}|u(y)|^p dy\\
             &\le C(N,s,p,\varphi)\left(\int_{B_n^c}
	        |u(y)|^{p_s^*}dy\right)^{\frac{N-sp}N}\to 0 \text{ since } u\in \lps.
        \end{align*}
        
        Hence, $J_n^2\to 0$ as $n \to \infty.$
        
        \noindent{\it Step 3.} Finally, we show that $u\in \wsprc.$
        
        By step 1, we have that $\{u_n\}_{n\in\mathbb{N}}\subset\wspr.$ Then for all 
        $n\in \mathbb{N}$ there is $\phi_n\in\ccir$ such that 
        $\llbracket u_n-\phi_n\rrbracket_{s,p}\le \nicefrac{1}n.$ Therefore
        \[
            \llbracket u-\phi_n\rrbracket_{s,p}\le
            \llbracket u-u_n\rrbracket_{s,p}+\dfrac1{n}\to 0 
            \text{ as } n\to \infty \text{ (step 2).}
        \]
        Thus $u\in\wsprc.$
    \end{proof}
    
    It is easy to see that  $\wsprc$ is a reflexive banach space.
    Now the proof of the following results is standard.
    
    \medskip

	\begin{corollary}
		Let  $sp<N,$ and $g\in \lpsn.$ Then there is a 
		positive constant $C$ such that
		\[
			\int_{\mathbb{R}^N}g(x)|u(x)|^p\, dx\le C
			\llbracket u\rrbracket_{s,p}^p		
		\]
		for all $u\in\wsprc.$
	\end{corollary}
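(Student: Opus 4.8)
The plan is to combine Hölder's inequality with the Maz'ya-type embedding of Theorem \ref{theorem:Mazya}. First I would observe that the exponents $\tfrac{N}{sp}$ and $\tfrac{p_s^*}{p}$ are conjugate: indeed $\tfrac{sp}{N}+\tfrac{p}{p_s^*}=\tfrac{sp}{N}+\tfrac{N-sp}{N}=1$, using $p_s^*=\tfrac{pN}{N-sp}$. Hence for $u\in\wsprc$ one has $|u|^p\in L^{p_s^*/p}(\mathbb{R}^N)$ (this is where Theorem \ref{theorm:c}, or just $\wsprc\subseteq\mathcal{X}\!\prescript{s,p}{}(\mathbb{R}^N)$ from Theorem \ref{theorem:Mazya}, is used), and Hölder's inequality gives
\[
    \int_{\mathbb{R}^N}g(x)|u(x)|^p\,dx
    \le \int_{\mathbb{R}^N}|g(x)|\,|u(x)|^p\,dx
    \le \|g\|_{\nicefrac{N}{sp}}\,\big\||u|^p\big\|_{\nicefrac{p_s^*}{p}}
    = \|g\|_{\nicefrac{N}{sp}}\,\|u\|_{p_s^*}^p .
\]

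Next I would invoke Theorem \ref{theorem:Mazya} to estimate $\|u\|_{p_s^*}^p$ by the Gagliardo seminorm, obtaining
\[
    \int_{\mathbb{R}^N}g(x)|u(x)|^p\,dx
    \le \|g\|_{\nicefrac{N}{sp}}\,C(N,p)\,\frac{s(1-s)}{(N-sp)^{p-1}}\,\llbracket u\rrbracket_{s,p}^p .
\]
Setting $C\coloneqq C(N,p)\,\dfrac{s(1-s)}{(N-sp)^{p-1}}\,\|g\|_{\nicefrac{N}{sp}}$, which is finite and positive precisely because $g\in\lpsn$ with $g\not\equiv0$ and $0<s<1$, $sp<N$, yields the claim for every $u\in\wsprc$.

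There is essentially no obstacle here: the only points requiring a word of care are the verification that $\tfrac{N}{sp}$ and $\tfrac{p_s^*}{p}$ are Hölder-conjugate, and the remark that the right-hand side of Hölder's inequality is meaningful, i.e.\ that $u\in L^{p_s^*}(\mathbb{R}^N)$, which is exactly the content of Theorem \ref{theorem:Mazya} (equivalently Theorem \ref{theorm:c}). Everything else is a direct substitution, so the statement follows immediately.
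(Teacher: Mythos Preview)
Your argument is correct and is exactly the standard proof the paper has in mind: the authors omit the proof entirely, writing only that ``the proof of the following results is standard,'' and the combination of H\"older's inequality (with the conjugate pair $\tfrac{N}{sp}$ and $\tfrac{p_s^*}{p}$) together with Theorem \ref{theorem:Mazya} is precisely that standard argument. One cosmetic remark: the corollary as stated does not assume $g\not\equiv 0$, so your computed constant could be zero; but since the inequality is then trivial you can simply take any positive $C$ in that case.
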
 
	
	\begin{corollary}\label{coro:importante}
		Let  $sp<N,$ and $g\in \lpsn.$  If $\{u_n\}_{n\in\mathbb{N}}$ 
		is a sequence of 
		$\wsprc$ such that $u_{n}\rightharpoonup u$ weakly in $\wsprc,$
		then there is a subsequence $\{u_{n_k}\}_{k\in\mathbb{N}}$ such that
		\[
	        \begin{aligned}
	            \int_{\mathbb{R}^N}g(x)|u_{n_k}(x)|^p dx&\to 
	            \int_{\mathbb{R}^N}g(x)|u(x)|^p dx,\\
	            \int_{\mathbb{R}^N}g(x)|u_{n_k}(x)|^{p-2}u_{n_k}(x)u(x) dx&\to 
	            \int_{\mathbb{R}^N}g(x)|u(x)|^p dx.
            \end{aligned}
        \]
	\end{corollary}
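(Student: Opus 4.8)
The plan is to combine the continuous embedding $\wsprc=\mathcal{X}\!\prescript{s,p}{}(\mathbb{R}^N)\hookrightarrow\lps$ coming from Theorems \ref{theorem:Mazya} and \ref{theorm:c} with the \emph{local} compact embedding $W\!\prescript{s,p}{}(B_R)\hookrightarrow\hookrightarrow L^p(B_R)$ on balls (see \cite{DNPV}; it holds because $sp<N$ forces $p<p_s^*$ and $B_R$ is bounded and smooth), and to use the global integrability $g\in\lpsn$ to control the behaviour at infinity. Since $u_n\rightharpoonup u$ weakly in $\wsprc$, the sequence is norm-bounded, so $M:=\sup_n\|u_n\|_{p_s^*}<\infty$ by the quoted embeddings. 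For each fixed $R\in\mathbb{N}$ the restrictions $u_n|_{B_R}$ are bounded in $W\!\prescript{s,p}{}(B_R)$ (their $L^p(B_R)$-norms are controlled via Hölder by $M$, their Gagliardo seminorms by $\llbracket u_n\rrbracket_{s,p}$), so the compact embedding together with a diagonal argument over $R$ yields a subsequence $\{u_{n_k}\}$ and $w\in L^p_{\mathrm{loc}}(\mathbb{R}^N)$ with $u_{n_k}\to w$ in $L^p(B_R)$ for every $R$ and, after a further extraction, $u_{n_k}\to w$ a.e.\ in $\mathbb{R}^N$; testing both modes of convergence against functions in $C^\infty_0(\mathbb{R}^N)$ identifies $w=u$ a.e.

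For the first convergence, fix $\varepsilon>0$ and choose $R$ with $\|g\|_{L^{N/sp}(B_R^c)}<\varepsilon$ (possible since $g\in\lpsn$). Hölder's inequality with exponents $\tfrac{N}{sp}$ and $\tfrac{p_s^*}{p}$ gives
\[
    \int_{B_R^c}|g|\,|u_{n_k}|^p\,dx\le\|g\|_{L^{N/sp}(B_R^c)}\,\|u_{n_k}\|_{p_s^*}^p\le\varepsilon\,M^p,
\]
and the same bound with $u$ in place of $u_{n_k}$. On the finite-measure set $B_R$ the functions $g\,|u_{n_k}|^p$ converge a.e.\ to $g\,|u|^p$ and are uniformly integrable, because $\int_A|g|\,|u_{n_k}|^p\le\|g\|_{L^{N/sp}(A)}M^p$ and $\|g\|_{L^{N/sp}(A)}\to0$ as $|A|\to0$ by absolute continuity of the integral of the fixed function $|g|^{N/sp}$; hence Vitali's theorem yields $\int_{B_R}g\,|u_{n_k}|^p\to\int_{B_R}g\,|u|^p$. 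Combining the three estimates, $\limsup_k\big|\int_{\mathbb{R}^N}g(|u_{n_k}|^p-|u|^p)\,dx\big|\le 2\varepsilon M^p$, and letting $\varepsilon\downarrow0$ proves the first claim.

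For the second convergence, note that $\{|u_{n_k}|^{p-2}u_{n_k}\}$ is bounded in $L^{p_s^*/(p-1)}(\mathbb{R}^N)$ (with norm $\|u_{n_k}\|_{p_s^*}^{p-1}\le M^{p-1}$) and converges a.e.\ to $|u|^{p-2}u$, hence converges to it weakly in $L^{p_s^*/(p-1)}(\mathbb{R}^N)$. Since $\tfrac{p}{p_s^*}+\tfrac{sp}{N}=1$, the conjugate exponent of $p_s^*/(p-1)$ equals $\big(\tfrac{1}{p_s^*}+\tfrac{sp}{N}\big)^{-1}$, so Hölder shows $gu\in L^{(p_s^*/(p-1))'}(\mathbb{R}^N)$ with $\|gu\|_{(p_s^*/(p-1))'}\le\|g\|_{L^{N/sp}}\|u\|_{p_s^*}<\infty$. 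Pairing the weak convergence against $gu$ gives
\[
    \int_{\mathbb{R}^N}g\,|u_{n_k}|^{p-2}u_{n_k}\,u\,dx\longrightarrow\int_{\mathbb{R}^N}g\,|u|^{p-2}u\,u\,dx=\int_{\mathbb{R}^N}g\,|u|^p\,dx.
\]
(Equivalently, one may repeat the $B_R/B_R^c$ splitting of the previous step, applying Vitali to $g\,|u_{n_k}|^{p-2}u_{n_k}\,u$ on $B_R$.)

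The main obstacle is the absence of a global compact embedding $\wsprc\hookrightarrow\lps$ in $\mathbb{R}^N$: strong local convergence alone does not control $\int_{\mathbb{R}^N}g\,|u_{n_k}|^p$. This is overcome by exploiting $g\in\lpsn$ twice — once to make the tail over $B_R^c$ uniformly small, and once (via absolute continuity of $\int|g|^{N/sp}$) to obtain uniform integrability over bounded sets, so that the local compactness of $W\!\prescript{s,p}{}(B_R)\hookrightarrow\hookrightarrow L^p(B_R)$ suffices. Everything else is a routine application of Hölder's inequality, the diagonal extraction, and Vitali's convergence theorem.
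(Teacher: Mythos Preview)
Your argument is correct. The paper does not actually prove this corollary: it merely states that ``the proof of the following results is standard'' and moves on, so there is nothing to compare your approach to beyond confirming that your tail/local splitting via $g\in L^{N/sp}$, the local compact embedding plus diagonal extraction, and the Vitali/weak-pairing arguments are precisely the standard route the authors have in mind.
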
     
	
	For more details about these spaces and their use, 
    we refer the reader to 
    \cite{Adams, DD, DNPV, Grisvard,MR3593528, Mazya}.	
\subsection{The principal eigenvalue in bounded domain.}       
    
    We start by introducing the definition of eigenpair in bounded domain.
    
     \begin{definition}\label{def:eigend} 
        Let $\Omega\subset\mathbb{R}^N$ be a bounded 
        domain with Lipschitz   
        boundary, $s\in(0,1)$ $p\in(1,\infty)$ and 
        $g\in \mathcal{A}(\Omega)\coloneqq\{f\in L^{\infty}(\Omega)
        \colon |\{x\in\Omega\colon f(x)>0\}|>0\}.$
        We say that a pair $(u,\mu)\in\twspo\times\mathbb{R}$ 
        is a weak solution of
        \begin{equation}\label{eq:epd}
	           \begin{cases}
                    (-\Delta_p)^s u(x)=\mu g(x)|u(x)|^{p-2}u(x)
	                    & \text{ in }\Omega,\\
	                    u(x)=0 &  \text{ in }
	                    \mathbb{R}^N\setminus\Omega,\\
	            \end{cases}
        \end{equation}
        if
        \begin{equation}\label{eq:wsd}
	          \begin{aligned}
                   \int_{\mathbb{R}^N}\int_{\mathbb{R}^N}
                     &\dfrac{|u(x)-u(y)|^{p-2}(u(x)-u(y))(\varphi(x)-\varphi(y))}
                     {|x-y|^{N+ps}}\,dxdy\\
                   &\hspace{4cm}=\mu\int_{\mathbb{R}^N}
                   g(x)|u(x)|^{p-2}u(x)\varphi(x) dx
              \end{aligned}
         \end{equation}              
         for all $\varphi\in C_0^\infty(\Omega).$    
         A pair $(u,\mu)$ is called a Dirichlet eigenpair if 
         $u$ is nontrivial 
         and $(u,\mu)$ is a weak solution of \eqref{eq:epd}.
         In which case, $\mu$ is called an Dirichlet eigenvalue and  $u$ a
         corresponding eigenfunction.                 
    \end{definition}
    
     Given $g\in\mathcal{A}(\Omega),$ the first Dirichlet eigenvalue is
    \begin{equation}
	    \label{eq:RQ}
	        \mu_1(\Omega,g) \coloneqq\min\left\{\llbracket u\rrbracket_{s,p}^p\colon
	            u\in\twspo\text{ and } 
	            \int_{\mathbb{R}^n} g(x)|u(x)|^p\,dx=1\right\}.
     \end{equation}
     Moreover, we have the following result.
     
     \begin{theorem}\label{teo:LL}
        Let $\Omega\subset\mathbb{R}^N$ be a bounded domain with Lipschitz 
        boundary, $s\in(0,1),$ $p\in(1,\infty),$ and 
        $g\in\mathcal{A}.$  There exists a positive 
        function $u\in\twspo\cap\lir,$ such that
        \begin{itemize}
		    \item $u$ is a minimizer of \eqref{eq:RQ};
		    \item $(u,\mu_1(\Omega,g))$ is a weak solution of 
		    \eqref{eq:epd}.
        \end{itemize}
        Furthermore $\mu_1(\Omega,g)$ is simple.
    \end{theorem}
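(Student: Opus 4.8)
The plan is to combine the direct method of the calculus of variations with a Lagrange multiplier computation, known local regularity for the fractional $p$-Laplacian, a strong maximum principle, and a hidden-convexity (Picone-type) argument. \emph{Existence of a minimizer and the eigenvalue equation.} Since $g\in\mathcal{A}(\Omega)$, the constraint set $\mathcal{M}\coloneqq\{u\in\twspo\colon\int_{\mathbb{R}^N}g|u|^p\,dx=1\}$ is nonempty, so $\mu_1(\Omega,g)$ is well defined, and it is positive: on the bounded domain $\Omega$ the seminorm $\llbracket\cdot\rrbracket_{s,p}$ is an equivalent norm on $\twspo$ (fractional Poincar\'e inequality), so $\llbracket u_n\rrbracket_{s,p}\to0$ along a minimizing sequence would force $\|u_n\|_{L^p(\Omega)}\to0$, contradicting $\int g|u_n|^p\,dx=1$ since $g\in\lir$. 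A minimizing sequence $\{u_n\}\subset\mathcal{M}$ is bounded in $\twspo$, so up to a subsequence $u_n\rightharpoonup u$; by the compactness of the embedding $\twspo\hookrightarrow L^p(\Omega)$ (when $sp<N$ this, together with the weak continuity of $v\mapsto\int g|v|^p\,dx$, also follows from Corollary~\ref{coro:importante}) one has $\int g|u|^p\,dx=1$, so $u\in\mathcal{M}$, and weak lower semicontinuity of $\llbracket\cdot\rrbracket_{s,p}^p$ makes $u$ a minimizer. Since $\bigl||a|-|b|\bigr|\le|a-b|$, replacing $u$ by $|u|$ does not increase the seminorm and preserves the constraint, so I may assume $u\ge0$. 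As $u\mapsto\llbracket u\rrbracket_{s,p}^p$ and $u\mapsto\int g|u|^p\,dx$ are $C^1$ on $\twspo$ and the differential of the constraint at $u$ applied to $u$ equals $p\int g|u|^p\,dx=p\neq0$, the Lagrange multiplier rule yields $\mu\in\mathbb{R}$ for which \eqref{eq:wsd} holds for all $\varphi\in\twspo$, in particular all $\varphi\in C_0^\infty(\Omega)$; testing with $\varphi=u$ gives $\mu=\llbracket u\rrbracket_{s,p}^p=\mu_1(\Omega,g)$. Thus $(u,\mu_1(\Omega,g))$ is a weak solution of \eqref{eq:epd}.

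\emph{Boundedness and positivity.} That $u\in\lir$ and that $u$ is continuous in $\Omega$ should follow from the standard De~Giorgi/Moser iteration for the fractional $p$-Laplacian applied to the eigenvalue equation (using Theorem~\ref{theorem:Mazya} when $sp<N$ and the corresponding embeddings when $sp\ge N$; see \cite{FP,LL}). Since $u\ge0$, $u\not\equiv0$, and $(-\Delta_p)^su=\mu_1(\Omega,g)\,g\,u^{p-1}\ge-\mu_1(\Omega,g)\|g\|_{L^\infty(\Omega)}u^{p-1}$ weakly, the function $u$ is a nonnegative weak supersolution of $(-\Delta_p)^sv+\mu_1(\Omega,g)\|g\|_{L^\infty(\Omega)}v^{p-1}=0$, and a strong maximum principle for the fractional $p$-Laplacian then gives $u>0$ in $\Omega$.

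\emph{Simplicity.} The crucial tool is the hidden convexity of the Dirichlet seminorm: for nonnegative $w_0,w_1\in\twspo$ and $t\in(0,1)$, the function $\sigma_t\coloneqq\bigl((1-t)w_0^p+tw_1^p\bigr)^{1/p}$ satisfies $\llbracket\sigma_t\rrbracket_{s,p}^p\le(1-t)\llbracket w_0\rrbracket_{s,p}^p+t\llbracket w_1\rrbracket_{s,p}^p$, with equality only if $w_0$ and $w_1$ are proportional; this is obtained by integrating a pointwise convexity inequality, equivalently from the discrete Picone inequality for the fractional $p$-Laplacian (see \cite{LL,FP}). If $w_0$ and $w_1$ are two positive minimizers, then $\int g\sigma_t^p\,dx=(1-t)\int gw_0^p\,dx+t\int gw_1^p\,dx=1$, so $\sigma_t\in\mathcal{M}$ and hence $\llbracket\sigma_t\rrbracket_{s,p}^p\ge\mu_1(\Omega,g)=(1-t)\llbracket w_0\rrbracket_{s,p}^p+t\llbracket w_1\rrbracket_{s,p}^p$; the convexity inequality is therefore an equality, so $w_1=cw_0$ with $c>0$, and the normalization forces $c=1$. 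For an arbitrary eigenfunction $w$ associated with $\mu_1(\Omega,g)$, testing \eqref{eq:wsd} with $\varphi=w$ gives $\llbracket w\rrbracket_{s,p}^p=\mu_1(\Omega,g)\int g|w|^p\,dx$; since $w\not\equiv0$ forces $\llbracket w\rrbracket_{s,p}^p>0$ and $\mu_1(\Omega,g)>0$, we get $\int g|w|^p\,dx>0$, so after normalization $|w|\in\mathcal{M}$, and as $\llbracket|w|\rrbracket_{s,p}\le\llbracket w\rrbracket_{s,p}$ the function $|w|$ is a minimizer, hence (by the preceding step) a positive continuous eigenfunction; therefore $w$ cannot vanish in the connected set $\Omega$, has constant sign, and equals $u$ after possibly changing its sign. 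Thus every $\mu_1(\Omega,g)$-eigenfunction is a scalar multiple of $u$.

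The existence, eigenvalue-equation, and positivity steps are routine once the functional-analytic framework (reflexivity, compact embedding, Poincar\'e inequality) and the regularity and maximum-principle theory for the fractional $p$-Laplacian are in hand. The main obstacle is the simplicity part: establishing the hidden convexity of $\llbracket\cdot\rrbracket_{s,p}^p$ with the correct equality case, and ruling out sign changes of $\mu_1(\Omega,g)$-eigenfunctions, which is where the $L^\infty$/continuity information is used to pass from ``$|w|>0$ a.e.'' to a pointwise statement on the connected domain $\Omega$.
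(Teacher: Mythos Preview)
The paper does not actually prove this theorem: its entire proof is the single line ``See \cite[Section 4]{MR3556755}.'' Your proposal supplies exactly the kind of argument that reference contains---direct method for existence, Lagrange multipliers for the Euler equation, Moser/De~Giorgi iteration for $L^\infty$ bounds, strong maximum principle for strict positivity, and the hidden-convexity/Picone argument for simplicity---so there is nothing to compare; you have filled in what the paper outsourced.

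Your argument is essentially correct. One small streamlining: to show that every $\mu_1(\Omega,g)$-eigenfunction $w$ has constant sign you do not need continuity of $w$ itself. Since $w$ is an eigenfunction, after normalization it is a minimizer; if $w_+\not\equiv0$ and $w_-\not\equiv0$ then the strict inequality $\llbracket|w|\rrbracket_{s,p}<\llbracket w\rrbracket_{s,p}$ (the integrand drops strictly wherever $w(x)w(y)<0$, a set of positive measure) already contradicts minimality. This is exactly Lemma~\ref{lemma:PositiveFE1} in the paper, and it lets you conclude $w=\pm|w|$ before invoking hidden convexity to identify $|w|$ with $u$, so the connectedness/continuity step becomes unnecessary.
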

     \begin{proof}
	    See \cite[Section 4]{MR3556755}.
    \end{proof}  

    Finally by a scaling argument, we have the following result.    
    \begin{lemma}\label{lemma:auxeigen}
        Let $B_R$ be the ball of center $0$ and radius $R$ and 
            $\mu_R=\mu_1(B_R,1).$  Then
            \[
                 \mu_1(B_R,1)\le\dfrac1{R^{sp}}\mu_1(B_1,1).
            \] 
    \end{lemma}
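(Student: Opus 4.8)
The plan is the standard dilation argument. By Theorem \ref{teo:LL} there is a minimizer $v$ for $\mu_1(B_1,1)$; that is, $v\in\widetilde{W}\!\prescript{s,p}{}(B_1)$ with $\int_{B_1}|v(x)|^p\,dx=1$ and $\llbracket v\rrbracket_{s,p}^p=\mu_1(B_1,1)$. I would set $u(x)\coloneqq R^{-N/p}\,v(x/R)$, which is supported in $B_R$, the normalization constant being chosen precisely so that
\[
	\int_{\mathbb{R}^N}|u(x)|^p\,dx = R^{-N}\int_{\mathbb{R}^N}|v(x/R)|^p\,dx
	= R^{-N}\cdot R^{N}\int_{\mathbb{R}^N}|v(y)|^p\,dy = 1.
\]

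Next I would verify that $u$ is an admissible competitor in the variational characterization \eqref{eq:RQ} of $\mu_1(B_R,1)$. Substituting $x=R\xi$, $y=R\eta$ in the Gagliardo double integral and using $|x-y|^{N+sp}=R^{N+sp}|\xi-\eta|^{N+sp}$ together with $dx\,dy=R^{2N}\,d\xi\,d\eta$ yields
\[
	\llbracket u\rrbracket_{s,p}^p
	= R^{-N}\cdot\frac{R^{2N}}{R^{N+sp}}\,\llbracket v\rrbracket_{s,p}^p
	= \frac{1}{R^{sp}}\,\llbracket v\rrbracket_{s,p}^p
	= \frac{1}{R^{sp}}\,\mu_1(B_1,1)<\infty .
\]
In particular the zero extension of $u$ has finite seminorm on $\mathbb{R}^N$, so $u\in\widetilde{W}\!\prescript{s,p}{}(B_R)$, and together with the normalization above $u$ is admissible for \eqref{eq:RQ}.

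Consequently $\mu_1(B_R,1)\le\llbracket u\rrbracket_{s,p}^p=\frac{1}{R^{sp}}\mu_1(B_1,1)$, which is the stated inequality. I do not expect any genuine difficulty here; the only point deserving a line of justification is that dilation preserves membership in the space $\widetilde{W}\!\prescript{s,p}{}(\,\cdot\,)$, which is exactly the change of variables already carried out. (Running the same rescaling in the opposite direction gives the reverse inequality, hence in fact equality, but only the one-sided bound is needed in what follows.)
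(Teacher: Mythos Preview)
Your argument is correct and is precisely the scaling argument the paper alludes to (the paper states the lemma without proof, merely remarking ``by a scaling argument''). Your parenthetical observation that the reverse rescaling yields equality is also correct, though, as you note, only the one-sided inequality is used later.
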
    

\section{Case $sp<N$}\label{sp<N}
  As mentioned in the introduction  we shall establish the existence of a sequence of eigenvalues using the 
    Lusternik--Schnirelman principle, see \cite{MR0269962,MR578914,MR768749}.
    
    \medskip
    
    Let us consider
    \[
        \mathcal{M}
        \coloneqq
        \left\{u\in\wsprc\colon p\Psi(u)\coloneqq\int_{\mathbb{R}^N} 
        g(x)|u(x)|^p dx=1 \right\},
    \]
    and $\Phi\colon \wsprc\to\mathbb{R}$
    \[
        \Phi(u)\coloneqq\dfrac1p\int_{\mathbb{R}^N}\int_{\mathbb{R}^N}
			\dfrac{|u(x)-u(y)|^p}{|x-y|^{N+sp}}\, dxdy.
    \]
    It is known that $\Phi$ is weakly lower semicontinuous and 
    that $\Phi$ and $\Psi$ are of class $C^1.$
    
    \medskip
    
    Observe that,  $(u,\lambda)\in \mathcal{M}\times\mathbb{R}$ 
    is an eigenpair if only if u is a critical point of 
    $\Phi$ restricted to the manifold $\mathcal{M}.$ 
    Then, we are looking for the critical points of $\Phi$ 
    restricted to the manifold $\mathcal{M}.$ To find them, we will use the  
    Lusternik--Schnirelman 
    principle. For this reason we need to show Palais-Smale condition for the 
    functional $\Phi$ on $\mathcal{M}.$
    
    \begin{lemma}\label{lemma:plc}
	    Assume that $sp<N,$ $g\in \lir\cap\lpsn$ and $g\not\equiv0.$ 
	    Then the functional $\Phi$ satisfies the Palais--Smale
	    condition on $\mathcal{M}$.
    \end{lemma}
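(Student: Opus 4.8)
The plan is to take a Palais–Smale sequence $\{u_n\}\subset\mathcal{M}$, i.e. $\Phi(u_n)\to c$ and $(\Phi|_{\mathcal{M}})'(u_n)\to 0$ in the dual of $\wsprc$, and show it has a strongly convergent subsequence. First I would record that since $u_n\in\mathcal{M}$ and $\Phi(u_n)=\tfrac1p\llbracket u_n\rrbracket_{s,p}^p\to c$, the sequence is bounded in $\wsprc$. Because $\wsprc$ is a reflexive Banach space (stated in the excerpt after Theorem \ref{theorm:c}), up to a subsequence $u_n\rightharpoonup u$ weakly in $\wsprc$. By Corollary \ref{coro:importante} (applicable since $g\in\lpsn$), along a further subsequence we get
\[
\int_{\mathbb{R}^N}g|u_n|^p\,dx\to\int_{\mathbb{R}^N}g|u|^p\,dx
\quad\text{and}\quad
\int_{\mathbb{R}^N}g|u_n|^{p-2}u_n u\,dx\to\int_{\mathbb{R}^N}g|u|^p\,dx.
\]
The first limit forces $\int g|u|^p\,dx=1$, so $u\in\mathcal{M}$ and in particular $u\not\equiv0$.

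Next I would translate the constrained derivative condition into a usable form. By the Lagrange multiplier rule on the $C^1$ manifold $\mathcal{M}$, there are reals $\lambda_n$ with $\Phi'(u_n)-\lambda_n\Psi'(u_n)\to 0$ in $(\wsprc)^*$; testing against $u_n$ and using $\langle\Phi'(u_n),u_n\rangle=\llbracket u_n\rrbracket_{s,p}^p=p\Phi(u_n)$ and $\langle\Psi'(u_n),u_n\rangle=\int g|u_n|^p\,dx=1$ shows $\lambda_n\to pc$; the value $pc$ is bounded, and one should note $c\ge\lambda_1(g)/p>0$ so the limit multiplier is harmless. Then I would test the relation $\Phi'(u_n)-\lambda_n\Psi'(u_n)\to0$ against the fixed function $u_n-u$, which is bounded in $\wsprc$; this gives
\[
\langle\Phi'(u_n),u_n-u\rangle
=\lambda_n\int_{\mathbb{R}^N}g|u_n|^{p-2}u_n(u_n-u)\,dx+o(1).
\]
The right-hand side tends to $0$ by the two limits from Corollary \ref{coro:importante} above (the $\int g|u_n|^p$ term and the $\int g|u_n|^{p-2}u_nu$ term have the same limit). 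Hence $\langle\Phi'(u_n),u_n-u\rangle\to0$. On the other hand, weak convergence $u_n\rightharpoonup u$ and the fact that $\Phi'(u)$ is a fixed element of $(\wsprc)^*$ give $\langle\Phi'(u),u_n-u\rangle\to0$ as well. Subtracting,
\[
\langle\Phi'(u_n)-\Phi'(u),\,u_n-u\rangle\to0.
\]

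The final and main step is to conclude strong convergence $u_n\to u$ in $\wsprc$ from this, i.e. to exploit the strict monotonicity of the fractional $p$-Laplacian energy. Writing $U_n(x,y)=u_n(x)-u_n(y)$, $U(x,y)=u(x)-u(y)$ and $d\mu=|x-y|^{-N-sp}\,dx\,dy$, the quantity above is
\[
\int_{\mathbb{R}^{2N}}\bigl(|U_n|^{p-2}U_n-|U|^{p-2}U\bigr)(U_n-U)\,d\mu\ \to 0,
\]
and the integrand is pointwise nonnegative by the standard monotonicity inequality for the map $t\mapsto|t|^{p-2}t$. For $p\ge2$ one has $(|a|^{p-2}a-|b|^{p-2}b)(a-b)\ge c_p|a-b|^p$, which immediately yields $\llbracket u_n-u\rrbracket_{s,p}^p\to0$; for $1<p<2$ one uses instead $|a-b|^p\le C_p\bigl[(|a|^{p-2}a-|b|^{p-2}b)(a-b)\bigr]^{p/2}(|a|^p+|b|^p)^{(2-p)/2}$ together with Hölder's inequality and the boundedness of $\llbracket u_n\rrbracket_{s,p}$ to reach the same conclusion. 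Either way $u_n\to u$ strongly in $\wsprc$, which is the Palais–Smale condition. I expect this last monotonicity/Hölder step — handling the subquadratic case $1<p<2$ carefully — to be the only place requiring real work; the compactness feeding it is entirely supplied by Corollary \ref{coro:importante}.
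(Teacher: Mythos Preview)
Your argument is correct. It differs from the paper's proof in the final step. The paper tests the Palais--Smale relation against $u$ (not $u_n-u$): using H\"older on the left and Corollary~\ref{coro:importante} on the right, it obtains
\[
\llbracket u_{n_k}\rrbracket_{s,p}^{p-1}\llbracket u\rrbracket_{s,p}
\ \ge\ \llbracket u_{n_k}\rrbracket_{s,p}^p\!\int_{\mathbb{R}^N}g|u_{n_k}|^{p-2}u_{n_k}u\,dx+o(1),
\]
from which $\limsup_k\llbracket u_{n_k}\rrbracket_{s,p}\le\llbracket u\rrbracket_{s,p}$; combined with weak lower semicontinuity this gives norm convergence, and then strong convergence follows from the uniform convexity (Radon--Riesz property) of $\wsprc$. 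Your route instead tests against $u_n-u$ and invokes the pointwise monotonicity inequalities for $t\mapsto|t|^{p-2}t$ to get $\llbracket u_n-u\rrbracket_{s,p}\to0$ directly. The paper's approach is a bit shorter and avoids the case split $p\ge2$ versus $1<p<2$, at the price of using the uniform convexity of the space; your approach is the standard $(S_+)$-type argument and would work equally well in settings where the underlying norm is not known to be uniformly convex. Both are entirely standard and neither has a gap.
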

    
    \begin{proof}
	    Given a sequence $\{u_n\}_{n\in\mathbb{N}}\subset \mathcal{M}$ such that 
	    $\{\Phi(u_n)\}_{n\in\mathbb{N}}$ is bounded and 
	    \begin{equation}\label{eq:aps}
	            \Phi'(u_n)-\llbracket u_n\rrbracket_{s,p}^p
	            \Psi'(u_n)\to 0\quad\text{as }n\to\infty,
        \end{equation}
	    we want to show that there exist a function $u\in \wsprc$ 
	    and a subsequence $\{u_{n_k}\}_{k\in\mathbb{N}}$ 
	    of $\{u_n\}_{n\in\mathbb{N}}$ such that 
	    \[
	            u_{n_k}\to u \quad\text{strongly in }\wsprc. 
	    \]
	    
	    Since $\wsprc\subset\wspo$ for any bounded smooth domain $\Omega$
	    and  $\{\Phi(u_n)\}_{n\in\mathbb{N}}$ 
	    is bounded, there exist a function $u\in \wsprc$ 
	    and a subsequence $\{u_{n_k}\}_{k\in\mathbb{N}}$ of $\{u_n\}_{n\in\mathbb{N}}$ 
	    such that 
	    \begin{equation}\label{eq:aps1}
	        \begin{aligned}
	           u_{n_k}&\rightharpoonup u \quad\text{weakly in }\wsprc,\\
	         u_{n_k}&\to u \quad\text{strongly in }\lpo,
            \end{aligned}
        \end{equation}
	    for any bounded smooth domain $\Omega.$ Furthermore, 
	    since $g\in\lpsn,$ by Corollary \ref{coro:importante} we have
	    that
	    \begin{equation}\label{eq:aps2}
	        \begin{aligned}
	            \int_{\mathbb{R}^N}g(x)|u_{n_k}(x)|^p dx&\to 
	            \int_{\mathbb{R}^N}g(x)|u(x)|^p dx,\\
	            \int_{\mathbb{R}^N}g(x)|u_{n_k}(x)|^{p-2}u_{n_k}(x)u(x) dx&\to 
	            \int_{\mathbb{R}^N}g(x)|u(x)|^p dx.
            \end{aligned}
        \end{equation}
        Therefore $u\in \mathcal{M}.$  
        
        On the other hand, by \eqref{eq:aps} we get
        \begin{align*}
	        \llbracket u_{n_k}&\rrbracket_{s,p}^{p-1} \llbracket u\rrbracket_{s,p}\ge\\&\ge
	        \int_{\mathbb{R}^N}\int_{\mathbb{R}^N}
                    \dfrac{|u_{n_k}(x)-u_{n_k}(y)|^{p-2}(u_{n_k}(x)-u_{n_k}(y))
                    (u(x)-u(y))}
                    {|x-y|^{N+ps}}\,dxdy\\
                    &=\llbracket u_{n_k}\rrbracket_{s,p}^p\int_{\mathbb{R}^N} g(x) 
                    |u_{n_k}(x)|^{p-2}u_{n_k}(x)u(x) dx
                    +o(1).
        \end{align*}
        Then, by \eqref{eq:aps1} and \eqref{eq:aps2}, we have
        \[
            \llbracket u\rrbracket_{s,p}\ge\liminf_{k\to\infty}\llbracket u_{n_k}
            \rrbracket_{s,p}\ge \llbracket u\rrbracket_{s,p},
        \]
        that is
        \[
            \liminf_{k\to\infty}\llbracket u_{n_k}\rrbracket_{s,p}=\llbracket u\rrbracket_{s,p}.
        \]
	    Hence, taking a subsequence if necessary, $u_{n_k}\to u$ strongly in $\wsprc.$   
	\end{proof}
	
	 Thus, by the Lusternik--Schnirelman principle, 
	 we get (i) of Theorem \ref{thm:existence1}. 
	 For (ii)  we consider additionally 
    \[
        \mathcal{M}^-
        \coloneqq
        \left\{u\in\wsprc\colon p\Psi(u)\coloneqq\int_{\mathbb{R}^N} 
        g(x)|u(x)|^p dx=-1 \right\},
    \]
that is not empty by assumption and so again by the Lusternik-Schnirelman principle we get the second sequence. The principal eigenvalue results of Theorem \ref{thm:existence1} are discuss in Section  \ref{PE}.
  
\section{Cases $sp\ge N$}\label{sp>N}
\subsection{Nonexistence result} 
    
    Our next aim is to show nonexistent result Theorem \ref{theorem:nonexistence} . The next result will be one of the keys to prove our nonexistence result.
    
     \begin{lemma}\label{lemma:nonexistence}
        If $sp>N,$ $g\in \lir$ and $   
        \displaystyle\int_{\mathbb{R}^N} g(x) dx >0$ then 
        \[
            \mu_1(B_R,g)\to 0 \text { as } R\to \infty.
        \]
        Here $B_R$ denotes the ball of center $0$ and radius $R.$
    \end{lemma}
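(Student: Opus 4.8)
The plan is to construct an explicit competitor in the Rayleigh quotient \eqref{eq:RQ} defining $\mu_1(B_R,g)$ and show that it drives the quotient to zero as $R\to\infty$. Since $sp>N$, constant functions are admissible locally: take a fixed cutoff $\varphi$ as in \eqref{eq:CroNig} and for $R>2$ define the test function $w_R(x)=\varphi(x/R)$, which lies in $\twspo[B_{2R}]$, equals $1$ on $B_R$, and is supported in $B_{2R}$. The strategy is to estimate the numerator $\llbracket w_R\rrbracket_{s,p}^p$ from above and the denominator $\int_{\mathbb{R}^N} g(x)|w_R(x)|^p\,dx$ from below, then let $R\to\infty$.

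First I would control the Gagliardo energy. By the usual scaling computation, $\llbracket \varphi(\cdot/R)\rrbracket_{s,p}^p = R^{N-sp}\llbracket\varphi\rrbracket_{s,p}^p$, so because $sp>N$ the numerator tends to $0$ like $R^{N-sp}$. Second, I would handle the denominator: write
\[
\int_{\mathbb{R}^N} g\,|w_R|^p\,dx = \int_{\mathbb{R}^N} g\,dx - \int_{\mathbb{R}^N} g\,(1-|w_R|^p)\,dx,
\]
and note the second integral is supported in $B_R^c$ with integrand bounded by $|g|$; since $g\in L^1(\mathbb{R}^N)$ (it lies in $\lir$ and, implicitly, the relevant integrability making $\int g$ finite), dominated convergence gives $\int_{B_R^c}|g|\to 0$, so $\int g\,|w_R|^p \to \int g\,dx>0$. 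In particular, for $R$ large this denominator is bounded below by a positive constant, say $\tfrac12\int_{\mathbb{R}^N} g\,dx$.

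Finally I would combine these. For $R$ large the rescaled function $\widetilde w_R = w_R\big/\big(\int g\,|w_R|^p\big)^{1/p}$ is admissible in \eqref{eq:RQ} for the domain $B_{2R}$, i.e.\ $\int g\,|\widetilde w_R|^p\,dx=1$, and
\[
\mu_1(B_{2R},g)\le \llbracket \widetilde w_R\rrbracket_{s,p}^p = \frac{\llbracket w_R\rrbracket_{s,p}^p}{\int_{\mathbb{R}^N} g\,|w_R|^p\,dx} \le \frac{C\,R^{N-sp}}{\tfrac12\int_{\mathbb{R}^N} g\,dx}\longrightarrow 0,
\]
as $R\to\infty$. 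Relabelling $2R$ as $R$ gives the claim. The one point needing care, rather than a true obstacle, is making sure the test function genuinely belongs to $\twspo[B_{2R}]$ and that \eqref{eq:RQ} applies with $g\in\mathcal{A}(B_{2R})$; this holds once $R$ is large enough that $\{x:g(x)>0\}$ has positive measure inside $B_{2R}$, which follows from $\int_{\mathbb{R}^N} g\,dx>0$ together with $\int_{B_R^c}|g|\to 0$. (If one prefers to avoid the cutoff near the boundary of $B_{2R}$ one can equally well use $w_R=\varphi(x/R)$ viewed inside the larger ball $B_{3R}$; the scaling of the energy is unchanged.)
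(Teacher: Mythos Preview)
Your approach is essentially the paper's own: use a rescaled cutoff as a competitor, exploit $\llbracket\varphi(\cdot/R)\rrbracket_{s,p}^p=R^{N-sp}\llbracket\varphi\rrbracket_{s,p}^p$, and show the denominator stays bounded away from zero. The paper takes $\varphi\in C^\infty_0(B_1)$ so the test function lands directly in $\widetilde{W}^{s,p}(B_R)$, avoiding your relabelling $2R\mapsto R$, but that is cosmetic.

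There is one genuine gap. You assume $g\in L^1(\mathbb{R}^N)$ in order to run dominated convergence on $\int_{B_R^c}|g|$, flagging this yourself as ``implicitly, the relevant integrability making $\int g$ finite''. But the hypothesis is only $g\in\lir$ with $\int_{\mathbb{R}^N}g\,dx>0$, and the paper explicitly allows $\int_{\mathbb{R}^N}g\,dx=+\infty$ (that is, $g_-\in L^1$ while $g_+\notin L^1$). In that case your splitting $\int g|w_R|^p=\int g-\int g(1-|w_R|^p)$ is meaningless since both terms on the right are infinite. The paper handles this by treating $g_+$ and $g_-$ separately: dominated convergence gives $\int g_-|\varphi_R|^p\to\int g_-<\infty$, while Fatou's lemma gives $\liminf\int g_+|\varphi_R|^p\ge\int g_+=+\infty$, so the denominator tends to $+\infty$ and the Rayleigh quotient still goes to zero. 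Your argument is easily repaired the same way---since $0\le w_R\le 1$ and $w_R\to 1$ pointwise, Fatou applied to $\int g_+|w_R|^p$ does the job---but as written it does not cover all the cases the lemma claims.
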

    \begin{proof}
        Let $\varphi\in C^{\infty}_0(B_1)$ be  such that $\varphi(0)=1.$ For all $R>1$ we 
        define 
        \[
            \varphi_R(x)\coloneqq \varphi\left(\dfrac{x}{R}\right)\in 
            C^{\infty}_0(B_R). 
        \]
        
        If $\displaystyle\int_{\mathbb{R}^N} g(x) dx<\infty,$ by dominated convergence
        theorem
        \[
             \int_{\mathbb{R}^N} g(x)|\varphi_R(x)|^p dx 
             \to \int_{\mathbb{R}^N} g(x) dx
        \] 
        as $R\to\infty.$
        
        On the other hand, if $\displaystyle\int_{\mathbb{R}^N} g(x) dx=\infty,$ 
        by dominated convergence theorem
        \[
	        \int_{\mathbb{R}^N} g_{-}(x)|\varphi_R(x)|^p dx \to
	         \int_{\mathbb{R}^N} g_-(x) dx 
        \]
        as $R\to\infty,$ and by Fatou's lemma
        \[
            \liminf_{R\to\infty}\int_{\mathbb{R}^N} g_{+}(x)|\varphi_R(x)|^p dx
            \ge  \int_{\mathbb{R}^N} g_+(x) dx=\infty. 
        \]
        Therefore, in both cases, 
        \begin{equation}\label{eq:aux1nonexist}
            \int_{\mathbb{R}^N} g(x)|\varphi_R(x)|^p dx \to \int_{\mathbb{R}^N} g(x) dx
        \end{equation}
	    as $R\to\infty.$
	    
	    Since $\displaystyle\int_{\mathbb{R}^N} g(x) dx>0,$
	    there exists $R_0>1$ such that 
	    \[
	            g\in\mathcal{A}(B_R)\quad\text{ and }
	            \quad \int_{\mathbb{R}^N} g(x)|\varphi_R(x)|^p dx>0 \quad\forall R >R_0.
        \]
	    Then,  
	    \begin{align*}
	        0&<\mu_1(B_R,g)\\&\le\dfrac{\llbracket \varphi_R\rrbracket_{s,p}^p}
	        {\displaystyle\int_{\mathbb{R}^N} g(x)|\varphi_R(x)|^p dx}=
	        \dfrac{1}{R^{sp-N}}\dfrac{\llbracket \varphi\rrbracket_{s,p}^p}
	        {\displaystyle\int_{\mathbb{R}^N} g(x)|\varphi_R(x)|^p dx}\to 0 \text{ as } R
	        \to \infty
        \end{align*}
	  because $sp>N.$
    \end{proof}
    
    We now prove our non-existence result.
    
    \begin{proof}[Proof of Theorem \ref{theorem:nonexistence}]
        Assume by contradiction the existence of a weak solution $(\lambda,u)$
        of $\eqref{eq:ep}$ such that $\lambda>0$ and $u>0.$ 
        
        Since $\displaystyle \int_{\mathbb{R}^N} g(x) dx>0$  there exists $R_0>1$
        such that $g\in\mathcal{A}(B_R)$ for all $R>R_0.$ 
        Then, by Theorem \ref{teo:LL}, 
        for any $R>R_0$ there exists a positive function
        $v_R\in\widetilde{W}\!\prescript{s,p}{}(B_R)\cap L^{\infty}(\mathbb{R}^N)$ 
        such that $(v_R,\mu_1(B_R,g))$ is a weak solution of \eqref{eq:epd} 
        with $\Omega=B_R$ and 
        \[
            \int_{\Omega}g(x)v_R(x)^p dx=1.
        \]
        
        Let $n\in\mathbb{N}$ and $u_n(x)\coloneqq u(x)+\nicefrac1n.$
        Since $v_R, u\in\wspr\cap L^\infty(\mathbb{R}^N)$ and $v_R=0$ in 
        $\mathbb{R}^N\setminus B_R,$ we have that 
        \[
            w_m(x)=\dfrac{v_R(x)^p}{u_m(x)^{p-1}}\in \widetilde{W}\!\prescript{s,p}{}(B_R).
        \] 
        For further details we refer the reader to \cite[Proof of Theorem 4.8]{MR3556755}.
        
        By the discrete version of Picone's identity (see \cite{Amghibech}), we have
        \begin{align*}
    	    0\le&
    	     \int_{\mathbb{R}^N}\int_{\mathbb{R}^N}
    	        \dfrac{|v_R(y)-v_R(x)|^p}{|x-y|^{n+sp}} dx dy\\ 
    	        &-\int_{\mathbb{R}^N}\int_{\mathbb{R}^N}
    	        \dfrac{|u(y)-u(x)|^{p-2}(u(y)-u(x))}{|x-y|^{n+sp}}
		        \left(\dfrac{v_R(y)^p}{u_m(y)^{p-1}}-\dfrac{v_R(x)^p}{u_m(x)^{p-1}}
		        \right)dxdy\\
		        \le&
		        \mu_{1}(B_R,g)-
		            \lambda\int_{B_R}g(x)u(x)^{p-1}
		            \dfrac{v_R(x)^p}{u_m(x)^{p-1}}\, dx.
	    \end{align*}
        Then, by dominated convergence theorem, we have
        \[
            0\le \mu_{1}(B_R,g) -\lambda
        \]
        that is
        \[
            \lambda\le\mu_1(B_R,g)\quad\forall R>R_0.
        \]
	    Therefore $\lambda=0$ since $\mu_1(B_R,g)\to 0$ as $R\to\infty$ 
	    by Lemma \ref{lemma:nonexistence}. This contradiction establishes the result.
    \end{proof}

\subsection{Existence result}

    It follows from the proof of Theorem \ref{thm:existence1} 
    that to show the existence of a sequence  
    of eigenvalues we need to prove the following:   
    if $\{u_n\}_{n\in\mathbb{N}}$ is contained in
    \[
       \mathcal{N}
       \coloneqq \left\{u\in\wspr\colon \displaystyle
       \int_{\mathbb{R}^N} g(x)|u|^p dx=1\right\}
    \]  
    and $\{\Phi(u_n)\}_{n\in\mathbb{N}}$ is bounded then  $\{u_n\}_{n\in\mathbb{N}}$
    is bounded in $\wspr.$ To prove this we will adapt ideas of \cite{MR1356326} 
    for the nonlocal case.
    
    \medskip
    We now want to prove existence result Theorem \ref{teo:Esp>N}. 
    The key in the proof of Theorem \ref{teo:Esp>N} is the following result.
    
    \begin{theorem}\label{theorm:embedding}
        Let $f\in\lir\cap L^{\nicefrac{N_0}{sp}}(\mathbb{R}^N)\setminus\{0\},$
	    for some integer $N_0>\max\{N,sp\},$ $M=N_0-N,$ and
	    $B_R^{M}$ be the ball of center $0$ and radius $R$ in $\mathbb{R}^M.$ 
	    Then there is a constant $C=C(N,M,s,p)$ 
	    such that
	     \[
	         \int_{\mathbb{R}^N} |f(x)||\varphi(x)|^p\,dx\le 
	        C R^{\nicefrac{Msp}{N_0}}\|f\|_{\nicefrac{N_0}{sp}}
	        \left(\llbracket \varphi \rrbracket_{s,p}^p+
	            \mu_1(B_R^M,1)
	            \|\varphi\|_p^p\right)
        \]
        for in $\varphi\in\ccir.$
    \end{theorem}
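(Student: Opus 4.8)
The plan is to deduce the inequality from the fractional Sobolev inequality of Theorem~\ref{theorem:Mazya}, but applied not in $\mathbb{R}^N$ — where it is useless because $sp\ge N$ is allowed — but in the \emph{larger} euclidean space $\mathbb{R}^{N_0}=\mathbb{R}^{N}\times\mathbb{R}^{M}$, in which $sp<N_0$; this is a dimensional lifting in the spirit of \cite{MR1356326}. First I would fix once and for all a function $\psi\in C_0^{\infty}(B_1^{M})$ with $\psi\ge 0$, $\psi\not\equiv 0$, set $\psi_R(y)\coloneqq\psi(y/R)\in C_0^{\infty}(B_R^{M})$, and for $\varphi\in\ccir$ put $\Phi(x,y)\coloneqq\varphi(x)\psi_R(y)\in C_0^{\infty}(\mathbb{R}^{N_0})$. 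By Fubini's theorem,
\[
\int_{\mathbb{R}^N}|f(x)|\,|\varphi(x)|^{p}\,dx=\frac{1}{\|\psi_R\|_{L^{p}(\mathbb{R}^M)}^{p}}\int_{\mathbb{R}^N}\int_{B_R^{M}}|f(x)|\,|\Phi(x,y)|^{p}\,dy\,dx ,
\]
and since $\Phi$ is supported in $\mathbb{R}^{N}\times B_R^{M}$, Hölder's inequality on $\mathbb{R}^{N_0}$ with the conjugate exponents $\nicefrac{N_0}{sp}$ and $\nicefrac{N_0}{(N_0-sp)}$ gives
\[
\int_{\mathbb{R}^N}\int_{B_R^{M}}|f(x)|\,|\Phi|^{p}\,dy\,dx\le|B_R^{M}|^{\nicefrac{sp}{N_0}}\,\|f\|_{\nicefrac{N_0}{sp}}\,\|\Phi\|_{L^{pN_0/(N_0-sp)}(\mathbb{R}^{N_0})}^{p},
\]
where $|B_R^{M}|^{\nicefrac{sp}{N_0}}=c(M)\,R^{\nicefrac{Msp}{N_0}}$ and $pN_0/(N_0-sp)$ is exactly the fractional Sobolev exponent of dimension $N_0$. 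Since $\Phi\in C_0^\infty(\mathbb{R}^{N_0})\subset\mathcal W^{s,p}(\mathbb{R}^{N_0})$, Theorem~\ref{theorem:Mazya} applied in $\mathbb{R}^{N_0}$ bounds $\|\Phi\|_{L^{pN_0/(N_0-sp)}(\mathbb{R}^{N_0})}^{p}$ by a constant (depending only on $N_0$, $s$, $p$) times the Gagliardo seminorm of $\Phi$ over $\mathbb{R}^{N_0}$, which I denote $\llbracket\Phi\rrbracket_{s,p;\mathbb{R}^{N_0}}^{p}$.

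The core of the argument — and the step I expect to be the main obstacle — is the tensorization estimate for $\llbracket\Phi\rrbracket_{s,p;\mathbb{R}^{N_0}}^{p}$. Writing $z=(x,y)$, $z'=(x',y')\in\mathbb{R}^{N_0}$ and using the algebraic identity $\Phi(z)-\Phi(z')=(\varphi(x)-\varphi(x'))\psi_R(y)+\varphi(x')(\psi_R(y)-\psi_R(y'))$ together with $|a+b|^{p}\le 2^{p-1}(|a|^{p}+|b|^{p})$, the seminorm splits into two pieces. In the first, one integrates out the variable $y'\in\mathbb{R}^{M}$: the change of variables $y'=y+|x-x'|w$ and the dimensional identity $N_0+sp-M=N+sp$ produce the factor $C_1|x-x'|^{-(N+sp)}$ with $C_1=\int_{\mathbb{R}^{M}}(1+|w|^{2})^{-\frac{N_0+sp}{2}}\,dw<\infty$ (finite because $N_0+sp>M$), leaving $C_1\|\psi_R\|_{L^{p}(\mathbb{R}^M)}^{p}\llbracket\varphi\rrbracket_{s,p}^{p}$. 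Symmetrically, in the second piece one integrates out $x\in\mathbb{R}^{N}$, now using $N_0+sp-N=M+sp$ and $C_2=\int_{\mathbb{R}^{N}}(1+|v|^{2})^{-\frac{N_0+sp}{2}}\,dv<\infty$ (finite because $N_0+sp>N$), obtaining $C_2\|\varphi\|_{p}^{p}\llbracket\psi_R\rrbracket_{s,p;\mathbb{R}^{M}}^{p}$. Thus
\[
\llbracket\Phi\rrbracket_{s,p;\mathbb{R}^{N_0}}^{p}\le 2^{p-1}\Big(C_1\,\|\psi_R\|_{L^{p}(\mathbb{R}^M)}^{p}\,\llbracket\varphi\rrbracket_{s,p}^{p}+C_2\,\|\varphi\|_{p}^{p}\,\llbracket\psi_R\rrbracket_{s,p;\mathbb{R}^{M}}^{p}\Big),
\]
with $C_1,C_2$ depending only on $N,M,s,p$. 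The bookkeeping of which of the four integration variables to use in each piece, and checking that the resulting auxiliary integrals converge (which forces exactly the inequalities $N_0+sp>M$ and $N_0+sp>N$, both automatic here), is the only genuinely delicate point.

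Finally, combining the three displays and dividing through by $\|\psi_R\|_{L^{p}(\mathbb{R}^M)}^{p}$, it remains to absorb the ratio $\llbracket\psi_R\rrbracket_{s,p;\mathbb{R}^{M}}^{p}\big/\|\psi_R\|_{L^{p}(\mathbb{R}^M)}^{p}$ into $\mu_1(B_R^{M},1)$. By the scaling $y\mapsto\psi(y/R)$ one has $\llbracket\psi_R\rrbracket_{s,p;\mathbb{R}^{M}}^{p}=R^{M-sp}\llbracket\psi\rrbracket_{s,p;\mathbb{R}^{M}}^{p}$ and $\|\psi_R\|_{L^{p}(\mathbb{R}^M)}^{p}=R^{M}\|\psi\|_{L^{p}(\mathbb{R}^M)}^{p}$, so this ratio equals $R^{-sp}\,\llbracket\psi\rrbracket_{s,p;\mathbb{R}^{M}}^{p}\big/\|\psi\|_{L^{p}(\mathbb{R}^M)}^{p}$; on the other hand, the same scaling $u\mapsto u(\cdot/R)$ in \eqref{eq:RQ} (as in Lemma~\ref{lemma:auxeigen}, run in both directions) gives $\mu_1(B_R^{M},1)=R^{-sp}\mu_1(B_1^{M},1)$ with $\mu_1(B_1^{M},1)>0$. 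Hence the ratio is at most $\big(\llbracket\psi\rrbracket_{s,p;\mathbb{R}^{M}}^{p}\big/(\|\psi\|_{L^{p}(\mathbb{R}^M)}^{p}\,\mu_1(B_1^{M},1))\big)\,\mu_1(B_R^{M},1)$, a fixed multiple of $\mu_1(B_R^{M},1)$ depending only on $M,s,p$ and the once-and-for-all choice of $\psi$. Collecting every constant then yields the asserted inequality for all $\varphi\in\ccir$ with $C=C(N,M,s,p)$.
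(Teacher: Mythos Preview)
Your proposal is correct and follows the same dimensional-lifting strategy as the paper: tensorize $\varphi$ with an auxiliary function supported in $B_R^{M}$, apply H\"older and Theorem~\ref{theorem:Mazya} in $\mathbb{R}^{N_0}$, and split the Gagliardo seminorm of the product exactly as you do. The only cosmetic difference is that the paper takes the auxiliary function to be the normalized first eigenfunction $v$ of $B_R^{M}$ (so that $\llbracket v\rrbracket_{s,p}^{p}=\mu_1(B_R^{M},1)$ and $\|v\|_p=1$, and the eigenvalue appears with no extra work), whereas you use a fixed rescaled $\psi$ and recover $\mu_1(B_R^{M},1)$ afterwards via the scaling identity $\mu_1(B_R^{M},1)=R^{-sp}\mu_1(B_1^{M},1)$; both choices give the stated bound with $C=C(N,M,s,p)$.
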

    \begin{proof}
        Let 
        $(v,\mu_1(B_R^M,1))$ be a Dirichlet eigenpair such that $v>0$ in $B_R^M$ 
        and
        \[
            \int_{\mathbb{R}^M}|v(x)|^p\, dx =1.
        \]

        \medskip
        
        Given
        $\varphi \in \ccir,$ as in the proof of \cite[Lemma 3]{MR1356326},  we define 
        \[
            h,\tau\colon\mathbb{R}^N\times\mathbb{R}^M\to \mathbb{R}
        \]
        \[
            h(x,y)\coloneqq f(x)\cchi_{B_R^M}(y)\quad \text{ and } \quad
            \tau(x,y)\coloneqq\varphi(x)v(y),\quad (x,y)\in \mathbb{R}^N\times\mathbb{R}^M
        \]
        where $\cchi_{B_R^M}(y)$ is the characteristic function $B_R^M.$
        
        Observe that 
        \begin{equation}\label{eq:eta0}
	        \int_{\mathbb{R}^N} |f(x)||\varphi(x)|^p\,dx= 
	        \iint_{\mathbb{R}^N\times\mathbb{R}^M} |h(x,y)||\tau(x,y)|^p dx dy.
       \end{equation}   
      
   \noindent{\it Claim.} 
        $\tau\in W^{s,p}(\mathbb{R}^N\times\mathbb{R}^M).$ Moreover
         \begin{equation}\label{eq:eta1}
	        \begin{aligned}
	            &\iint_{\mathbb{R}^N\times\mathbb{R}^M}
	            \iint_{\mathbb{R}^N\times\mathbb{R}^M} 
	            \dfrac{|\tau(x,y)-\tau(w,z)|^p}{(|x-w|^2+|y-z|^2)^{\frac{M+N+sp}2}} 
	            dxdydwdz\le\\
	            &\le C\left(\int_{\mathbb{R}^N}\!\!\int_{\mathbb{R}^N}
	            \dfrac{|\varphi(x)-\varphi(w)|^p}{|x-w|^{N+sp}} dxdw+
	            \mu_1(B_R^M,1)
	            \int_{\mathbb{R}^N} |\varphi(w)|^p dw\right),
            \end{aligned}
          \end{equation}   
          with the constant $C$ depending only on $N,M,s$ and $p.$
          
          Note that
            \[
                \iint_{\mathbb{R}^N\times\mathbb{R}^M} |\tau(x,y)|^p dxdy=
                \left(\int_{\mathbb{R}^N} |\varphi(x)|^p dx\right)
                \left( \int_{\mathbb{R}^M} |v(y)|^p dy\right).
            \]
            Therefore $\tau\in L^p (\mathbb{R}^N\times\mathbb{R}^M).$
            
            On the other hand
            \begin{equation}\label{eq:eta2}
	            \begin{aligned}
	            &\iint_{\mathbb{R}^N\times\mathbb{R}^M}
	            \iint_{\mathbb{R}^N\times\mathbb{R}^M} 
	            \dfrac{|\tau(x,y)-\tau(w,z)|^p}{(|x-w|^2+|y-z|^2)^{\frac{M+N+sp}2}} 
	            dxdydwdz\le\\
	            &\le C(p)\left(\iint_{\mathbb{R}^N\times\mathbb{R}^M}
	            \iint_{\mathbb{R}^N\times\mathbb{R}^M} 
	            \dfrac{|\tau(x,y)-\tau(w,y)|^p}{(|x-w|^2+|y-z|^2)^{\frac{M+N+sp}2}} 
	            dxdydwdz\right.\\
	            &+\left. \iint_{\mathbb{R}^N\times\mathbb{R}^M}
	           \iint_{\mathbb{R}^N\times\mathbb{R}^M} 
	            \dfrac{|\tau(w,y)-\tau(w,z)|^p}{(|x-w|^2+|y-z|^2)^{\frac{M+N+sp}2}} 
	            dxdydwdz\right).
            \end{aligned}
        \end{equation}   
            
        For the first term on the right hand side of the previous inequality  we have
             \begin{equation}\label{eq:eta3}
	            \begin{aligned}
	            &\iint_{\mathbb{R}^N\times\mathbb{R}^M}
	            \iint_{\mathbb{R}^N\times\mathbb{R}^M} 
	            \dfrac{|\tau(x,y)-\tau(w,y)|^p}{(|x-w|^2+|y-z|^2)^{\frac{M+N+sp}2}}  
	            dxdydwdz=\\
	            &=\int_{\mathbb{R}^N}\!\!\int_{\mathbb{R}^N}
	            |\varphi(x)-\varphi(w)|^p
	            \int_{\mathbb{R}^M} |v(y)|^p\int_{\mathbb{R}^M}  
	            \dfrac{dzdydxdw}{(|x-w|^2+|y-z|^2)^{\frac{M+N+sp}2}}\\
	            &=C(N,M) \int_{\mathbb{R}^N}\!\!\int_{\mathbb{R}^N}
	            \dfrac{|\varphi(x)-\varphi(w)|^p}{|x-w|^{N+sp}} dxdw
	           \int_{0}^\infty  \!\!
	            \dfrac{r^{M-1}dr}{(1+r^2)^{\frac{M+N+sp}2}}\\ 
	            &=C(N,M,s,p) \int_{\mathbb{R}^N}\int_{\mathbb{R}^N}
	            \dfrac{|\varphi(x)-\varphi(w)|^p}{|x-w|^{N+sp}} dxdw.
	       \end{aligned}
        \end{equation}   
	        
	        Similarly
	         \begin{equation}\label{eq:eta4}
	         \begin{aligned}
	            &\iint_{\mathbb{R}^N\times\mathbb{R}^M}
	            \iint_{\mathbb{R}^N\times\mathbb{R}^M} 
	            \dfrac{|\tau(w,y)-\tau(w,z)|^p}{(|x-w|^2+|y-z|^2)^{\frac{M+N+sp}2}}  
	            dxdydwdz=\\
	            &=C(N,M,s,p) \int_{\mathbb{R}^N}\int_{\mathbb{R}^N}
	            \dfrac{|v(y)-v(z)|^p}{|y-z|^{N+sp}} dydz
	            \int_{\mathbb{R}^M} |\varphi(w)|^p dw\\
	            &=C(N,M,s,p) \mu_1(B_R^M,1)
	            \int_{\mathbb{R}^M} |\varphi(w)|^p dw.
	        \end{aligned}
        \end{equation}   
        Therefore, by \eqref{eq:eta2}, \eqref{eq:eta3} and \eqref{eq:eta4}, we get
        \eqref{eq:eta1}.
        
        \medskip
        
        Since $N_0=N+M>sp$ and $\tau\in W^{s,p}(\mathbb{R}^N\times\mathbb{R}^M),$ 
        we have that $|\tau|^p\in L^{\nicefrac{N_0}{(N_0-sp)}}
        (\mathbb{R}^N\times\mathbb{R}^M).$ Then by \eqref{eq:eta0}, H\"older's inequality,
        Theorem \ref{theorem:Mazya} and \eqref{eq:eta1}, we get
        \[
	         \int_{\mathbb{R}^N} |f(x)||\varphi(x)|^p\,dx\le 
	        C R^{\nicefrac{Msp}{N_0}}\|f\|_{\nicefrac{N_0}{sp}}
	        \left(\llbracket \varphi \rrbracket_{s,p}^p+
	            \mu_1(B_R^M,1)
	            \|\varphi\|_p^p\right)
        \]
        with the constant $C$ depending only on $N,M,s$ and $p.$
    \end{proof}
    
   \begin{corollary}\label{corollary:embedding}
	  Assume $sp\ge N$ and $g=g_1-g_2$ satisfies 
        \begin{itemize}
	        \item $g_1(x)\ge0$ a.e. in $\mathbb{R}^N$ and 
	            $g_1\in\lir\cap L^{\nicefrac{N_0}{sp}}(\mathbb{R}^N)\setminus\{0\},$
	            with $N_0\in\mathbb{N}$ such that $N_0>sp;$
	        \item $g_2(x)\ge\varepsilon>0$ a.e. in $\mathbb{R}^N.$ 
        \end{itemize} 
        Then there is a constant $C$ such that 
        \[
            \|u\|_p^p\le C(1+\llbracket u\rrbracket_{s,p}^p).
        \]
        for all $u\in \mathcal{N}.$
    \end{corollary}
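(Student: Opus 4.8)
The plan is to use the lower bound $g_2\ge\varepsilon$ to reduce the desired estimate to a bound on $\int_{\mathbb{R}^N} g_1|u|^p$, then to feed $f=g_1$ into the weighted inequality of Theorem \ref{theorm:embedding}, and finally to absorb the unwanted $\|u\|_p^p$ term by letting the auxiliary radius $R$ be large, which makes $\mu_1(B_R^M,1)$ small by Lemma \ref{lemma:auxeigen}.

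First I would fix $u\in\mathcal{N}$ and write $\int_{\mathbb{R}^N} g_1|u|^p-\int_{\mathbb{R}^N} g_2|u|^p=1$; since $g_2\ge\varepsilon>0$ a.e.\ this gives
\[
\varepsilon\|u\|_p^p\le\int_{\mathbb{R}^N} g_2(x)|u(x)|^p\,dx=\int_{\mathbb{R}^N} g_1(x)|u(x)|^p\,dx-1\le\int_{\mathbb{R}^N} g_1(x)|u(x)|^p\,dx.
\]
Thus it suffices to dominate $\int_{\mathbb{R}^N} g_1|u|^p$ by a constant times $\llbracket u\rrbracket_{s,p}^p$ (note $\|u\|_p$ and $\llbracket u\rrbracket_{s,p}$ are finite since $u\in\wspr$).

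Next, since $sp\ge N$ and $N_0>sp$, one has $N_0>\max\{N,sp\}$ and $M\coloneqq N_0-N\ge1$, so Theorem \ref{theorm:embedding} applies with $f=g_1$: for every $R>1$ and every $\varphi\in\ccir$,
\[
\int_{\mathbb{R}^N} g_1|\varphi|^p\,dx\le CR^{\nicefrac{Msp}{N_0}}\|g_1\|_{\nicefrac{N_0}{sp}}\left(\llbracket\varphi\rrbracket_{s,p}^p+\mu_1(B_R^M,1)\|\varphi\|_p^p\right).
\]
By density of $\ccir$ in $\wspr$, and since $u\mapsto\int_{\mathbb{R}^N} g_1|u|^p$ is continuous on $\wspr$ (as $g_1\in\lir$ and $u\mapsto|u|^p$ sends $L^p$ continuously into $L^1$), this inequality holds for all $u\in\wspr$. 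Combining it with the previous step and with $\mu_1(B_R^M,1)\le R^{-sp}\mu_1(B_1^M,1)$ from Lemma \ref{lemma:auxeigen}, I obtain
\[
\varepsilon\|u\|_p^p\le CR^{\nicefrac{Msp}{N_0}}\|g_1\|_{\nicefrac{N_0}{sp}}\llbracket u\rrbracket_{s,p}^p+C\,\mu_1(B_1^M,1)\,\|g_1\|_{\nicefrac{N_0}{sp}}\,R^{\nicefrac{Msp}{N_0}-sp}\|u\|_p^p.
\]

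To finish, observe that $\tfrac{Msp}{N_0}-sp=-\tfrac{Nsp}{N_0}<0$, so one may fix $R=R_0>1$ large enough that $C\,\mu_1(B_1^M,1)\,\|g_1\|_{\nicefrac{N_0}{sp}}\,R_0^{-\nicefrac{Nsp}{N_0}}\le\varepsilon/2$; since $\|u\|_p<\infty$, this term is absorbed into the left-hand side and one is left with $\|u\|_p^p\le\tfrac{2C}{\varepsilon}R_0^{\nicefrac{Msp}{N_0}}\|g_1\|_{\nicefrac{N_0}{sp}}\llbracket u\rrbracket_{s,p}^p\le C'(1+\llbracket u\rrbracket_{s,p}^p)$. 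The only things one must be careful about are bookkeeping: that the hypotheses really do give $N_0>\max\{N,sp\}$ so that Theorem \ref{theorm:embedding} is applicable, and that the exponent $\tfrac{Msp}{N_0}-sp$ is strictly negative — this sign is precisely what legitimizes the absorption. Beyond that there is no genuine analytic difficulty.
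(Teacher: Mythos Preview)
Your argument is correct and follows essentially the same route as the paper: bound $\varepsilon\|u\|_p^p$ by $\int g_1|u|^p$ via the constraint $u\in\mathcal{N}$, apply Theorem \ref{theorm:embedding} with $f=g_1$, use Lemma \ref{lemma:auxeigen} to make the coefficient of $\|u\|_p^p$ small by taking $R$ large, and absorb. The only differences are cosmetic: you drop the additive constant coming from $\int g|u|^p=1$ earlier, you make the density step from $\ccir$ to $\wspr$ explicit, and you display the exponent computation $\tfrac{Msp}{N_0}-sp=-\tfrac{Nsp}{N_0}<0$ that the paper leaves implicit.
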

    \begin{proof}
	    Let $u\in\mathcal{N}.$ Then by Theorem \ref{theorm:embedding}
	    and Lemma \ref{lemma:auxeigen} we get
	    \begin{align*}
	        \varepsilon\int_{\mathbb{R}^N}|u(x)|^pdx&\le
	        \int_{\mathbb{R}^N}g_2(x)|u(x)|^pdx=1+\int_{\mathbb{R}^N}g_1(x)|u(x)|^pdx\\
	        &\le 1+ C R^{\nicefrac{Msp}{N_0}}\|g_1\|_{\nicefrac{N_0}{sp}}
	        \left(\llbracket u \rrbracket_{s,p}^p+
	            \mu_1(B_R^M,1)
	            \|u\|_p^p\right)\\
	         &\le 1+ C R^{\nicefrac{(N_0-N)sp}{N_0}}\|g_1\|_{\nicefrac{N_0}{sp}}
	        \left(\llbracket u \rrbracket_{s,p}^p+
	            \dfrac1{R^{sp}}\mu_1(B_1,1)
	            \|u\|_p^p\right)
        \end{align*} 
        where $C$ is a constant independent of $u$ and $R.$ Then for $R$ large enough, 
        we have that there is a constant $C$ independent of $u$ such that
        \[
            \|u\|_p^p\le C(1+\llbracket u\rrbracket_{s,p}^p).
        \]
    \end{proof}
   
     Using Corollary \ref{corollary:embedding} and proceeding as 
     in the proof of Lemma \ref{lemma:plc} we can prove that  
     the functional $\Phi$ satisfies the Palis--Smale
	 condition on $\mathcal{N}.$
    
    \begin{lemma}\label{lemma:plc2}
	    Let $s\in(0,1)$ and  $p\in(1,\infty)$ be such that $sp\ge N.$ 
        Assume $g=g_1-g_2$ satisfies 
        \begin{itemize}
	        \item $g_1(x)\ge0$ a.e. in $\mathbb{R}^N$ and 
	            $g_1\in\lir\cap L^{\nicefrac{N_0}{sp}}(\mathbb{R}^N)\setminus\{0\},$
	            with $N_0\in\mathbb{N}$ such that $N_0>sp;$
	        \item $g_2(x)\ge\varepsilon>0$ a.e. in $\mathbb{R}^N.$ 
        \end{itemize} 
	    Then the functional $\Phi$ satisfies the Palais--Smale
	    condition on $\mathcal{N}.$
    \end{lemma}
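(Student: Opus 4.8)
The plan is to adapt the proof of Lemma \ref{lemma:plc} almost verbatim, replacing the compactness ingredients valid for $sp<N$ with their analogues in the regime $sp\ge N$, and using Corollary \ref{corollary:embedding} to recover the boundedness of Palais--Smale sequences in $\wspr$ that in the previous lemma came for free from membership in $\mathcal{M}$. So, first I would take a sequence $\{u_n\}_{n\in\mathbb{N}}\subset\mathcal{N}$ with $\{\Phi(u_n)\}_{n\in\mathbb{N}}$ bounded and $\Phi'(u_n)-\llbracket u_n\rrbracket_{s,p}^p\Psi'(u_n)\to0$; since $\{\Phi(u_n)\}$ bounded means $\{\llbracket u_n\rrbracket_{s,p}^p\}$ is bounded, Corollary \ref{corollary:embedding} gives $\|u_n\|_p^p\le C(1+\llbracket u_n\rrbracket_{s,p}^p)$, hence $\{u_n\}$ is bounded in $\wspr$, which is reflexive. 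Thus, up to a subsequence, $u_{n_k}\rightharpoonup u$ weakly in $\wspr$ and (by the compact embedding $\wspr\hookrightarrow\hookrightarrow L^p_{\mathrm{loc}}$, or $\twspo\hookrightarrow\hookrightarrow\lpo$ for bounded smooth $\Omega$) $u_{n_k}\to u$ strongly in $\lpo$ for every bounded smooth $\Omega$ and a.e. in $\mathbb{R}^N$.

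The next step is to pass to the limit in the weight terms: one needs
\[
\int_{\mathbb{R}^N}g(x)|u_{n_k}(x)|^p\,dx\to\int_{\mathbb{R}^N}g(x)|u(x)|^p\,dx,
\qquad
\int_{\mathbb{R}^N}g(x)|u_{n_k}(x)|^{p-2}u_{n_k}(x)u(x)\,dx\to\int_{\mathbb{R}^N}g(x)|u(x)|^p\,dx,
\]
which in Lemma \ref{lemma:plc} was Corollary \ref{coro:importante}. Here the argument is different: split $g=g_1-g_2$. For the $g_2$ part, uniform integrability of $|u_{n_k}|^p$ over $\mathbb{R}^N$ follows because $g_2\ge\varepsilon>0$ forces $\varepsilon\int_{\mathbb{R}^N}|u_{n_k}|^p\le\int g_2|u_{n_k}|^p=1+\int g_1|u_{n_k}|^p$, and the $g_1$ integrals are uniformly bounded via Theorem \ref{theorm:embedding}; together with a.e. convergence and Theorem \ref{theorm:embedding} applied on the tail (choosing $R$ to control the $L^p$-mass outside a large ball, using the smallness of $R^{Msp/N_0}\mu_1(B_R^M,1)\sim R^{-N}$), one gets tightness and Vitali's theorem yields the convergence, with the $g_1$ part handled the same way using $g_1\in\lpsn[N_0]$ and Theorem \ref{theorm:embedding} to bound the $L^{N_0/sp}$-weighted integral of a locally convergent, bounded sequence. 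This gives $u\in\mathcal{N}$, and also lets one replace $\int g|u_{n_k}|^{p-2}u_{n_k}u$ by $\int g|u|^p$ in the limit.

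Once $u\in\mathcal{N}$ and the weight integrals converge, the endgame is identical to Lemma \ref{lemma:plc}: from the Palais--Smale condition, testing the equation with $u$ itself,
\[
\llbracket u_{n_k}\rrbracket_{s,p}^{p-1}\llbracket u\rrbracket_{s,p}
\ge\int_{\mathbb{R}^N}\!\!\int_{\mathbb{R}^N}\frac{|u_{n_k}(x)-u_{n_k}(y)|^{p-2}(u_{n_k}(x)-u_{n_k}(y))(u(x)-u(y))}{|x-y|^{N+ps}}\,dxdy
=\llbracket u_{n_k}\rrbracket_{s,p}^p\!\int_{\mathbb{R}^N}g|u_{n_k}|^{p-2}u_{n_k}u\,dx+o(1),
\]
and using weak lower semicontinuity of $\llbracket\cdot\rrbracket_{s,p}$ together with the convergence of the weight term, one concludes $\liminf_k\llbracket u_{n_k}\rrbracket_{s,p}=\llbracket u\rrbracket_{s,p}$; combined with weak convergence in the uniformly convex space $\wspr$, this upgrades to $u_{n_k}\to u$ strongly in $\wspr$. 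I expect the main obstacle to be the second step: establishing the convergence of $\int g|u_{n_k}|^p$ and $\int g|u_{n_k}|^{p-2}u_{n_k}u$ over all of $\mathbb{R}^N$ (not just locally), since unlike the $sp<N$ case there is no global compact Sobolev embedding to lean on, so the tail estimate must be extracted by hand from Theorem \ref{theorm:embedding} with a carefully chosen radius $R$, exploiting that $\mu_1(B_R^M,1)\le R^{-sp}\mu_1(B_1^M,1)\to0$ makes the contribution of the $L^p$-mass negligible in the limit.
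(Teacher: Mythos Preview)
Your proposal is correct and follows exactly the approach the paper indicates: the paper's entire proof of this lemma is the single sentence ``Using Corollary \ref{corollary:embedding} and proceeding as in the proof of Lemma \ref{lemma:plc} we can prove that the functional $\Phi$ satisfies the Palais--Smale condition on $\mathcal{N}$,'' and your plan unpacks precisely this---Corollary \ref{corollary:embedding} for boundedness in $\wspr$, then the argument of Lemma \ref{lemma:plc} with the appropriate substitute for Corollary \ref{coro:importante}. You have in fact supplied more detail than the paper does, correctly flagging that the convergence of the weighted integrals is the step requiring genuine adaptation (via Theorem \ref{theorm:embedding} for the $g_1$-tail and the lower bound $g_2\ge\varepsilon$ for the $L^p$-control), which the paper leaves entirely implicit.
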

    
    Finally, by the Lusternik--Schnirelman principle, Theorem \ref{teo:Esp>N} follows.
     
    \section{H\"older regularity}\label{HR}
    
    In this section, we will show that if $(u, \lambda)$ is an eigenvalue then
    $u\in \lir\cap C^\gamma(\mathbb{R}^N)$ for some $\gamma\in(0,1).$ 
    
    \medskip
    
    By the fractional Sobolev embedding theorem, we know that if $sp>N$ then
    $\wspr\subset \lir\cap C^{s-\nicefrac{N}p}(\mathbb{R}^N).$ Then,
    we have the following result.
    
    \begin{lemma}\label{lemma:regularidad1}
	     If $sp>N$ and $(u,\lambda)$ is an eigenpair then
	     $u\in \lir\cap C^{s-\nicefrac{N}p}(\mathbb{R}^N).$ 
    \end{lemma}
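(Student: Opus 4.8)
The statement follows almost immediately from the definition of an eigenpair in the regime $sp\geq N$ together with the fractional Morrey embedding, so the argument will be short. The plan is to first unwind the definition and then quote the embedding theorem already recorded in Section \ref{NyP}.

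First I would observe that, since $sp>N$ (hence in particular $sp\geq N$), Definition \ref{def:eigen}(ii) forces the eigenfunction to lie in $\wspr$; that is, $u\in\lpr$ and $\llbracket u\rrbracket_{s,p}<\infty$, so that $\|u\|_p+\llbracket u\rrbracket_{s,p}<\infty$. Next I would invoke the continuous embedding $\wspr\hookrightarrow\lir\cap C^{0,\gamma}(\mathbb{R}^N)$, valid whenever $sp>N$ with $\gamma\coloneqq s-\nicefrac{N}p$; this is the fractional counterpart of Morrey's inequality and can be found, for instance, in \cite[Theorem 8.2]{DNPV} (see also \cite{Adams, DD}). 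Concretely, there is $C=C(N,s,p)>0$ such that
\[
    \|v\|_{\lir}+\sup_{x\neq y}\frac{|v(x)-v(y)|}{|x-y|^{\gamma}}\leq C\left(\|v\|_p+\llbracket v\rrbracket_{s,p}\right)\qquad\text{for all }v\in\wspr .
\]
Applying this inequality with $v=u$, and noting that $0<\gamma=s-\nicefrac{N}p<s<1$ because $0<s<1$ and $sp>N$, we conclude that $u\in\lir\cap C^{s-\nicefrac{N}p}(\mathbb{R}^N)$, which is precisely the claim.

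Since both ingredients are entirely standard and already available from the references collected in Section \ref{NyP}, there is no genuine obstacle here; the only mild point worth checking is that the H\"older exponent $s-\nicefrac{N}p$ indeed lies in $(0,1)$, which we verified above. This lemma serves as the base case of the H\"older regularity discussion in the range $sp\geq N$, to be complemented by an interior estimate argument when $sp=N$.
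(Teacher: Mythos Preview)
Your proposal is correct and follows exactly the paper's approach: the paper simply observes that the fractional Sobolev embedding gives $\wspr\subset\lir\cap C^{s-\nicefrac{N}p}(\mathbb{R}^N)$ when $sp>N$, and the lemma follows. Your write-up is just a slightly more detailed version of this one-line argument.
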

    
    \medskip
    
    In the case $sp\le N,$ we need assume that $g\in\lir$ to proof the result.

     \begin{lemma}\label{leama:linfnorm}
	    Assume $sp\le N$ and $g\in \lir.$ 
	    If $(u,\lambda)$ is an eigenpair then $u\in\lir.$
     \end{lemma}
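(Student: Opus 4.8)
The plan is to prove the estimate by a Moser-type iteration adapted to the nonlocal operator, in the spirit of the local boundedness results for the fractional $p$-Laplacian (see e.g.\ \cite{MR3411543,MR3556755} and the references therein); the hypothesis $g\in\lir$ is used only to dominate the right-hand side of \eqref{eq:ws} by $\lambda\|g\|_\infty|u|^{p-1}$. Since $-u$ is an eigenfunction of the same equation (with the same weight $g$) and $(-u)_+=u_-$, it suffices to bound $w:=u_+$, and then $u_-$ by symmetry, so that $u=u_+-u_-\in\lir$.

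For $\beta\ge1$ and $k>0$ I would test \eqref{eq:ws} with $\varphi=\varphi_{\beta,k}:=w\,(\min\{w,k\})^{p(\beta-1)}$, which is a Lipschitz function of $u$ vanishing at the origin and truncated at scale $k$, so that $\llbracket\varphi\rrbracket_{s,p}\le C(k,\beta,p)\llbracket u\rrbracket_{s,p}<\infty$ and $\varphi$ belongs to $\wsprc$ (resp.\ $\wspr$); hence $\varphi$ is admissible after extending \eqref{eq:ws} by density of $\ccir$. On the left-hand side I would use the standard pointwise inequality $|a-b|^{p-2}(a-b)\big(\varphi_{\beta,k}(a_+)-\varphi_{\beta,k}(b_+)\big)\ge c(p,\beta)\,|G_{\beta,k}(a_+)-G_{\beta,k}(b_+)|^p$, where $G_{\beta,k}(t):=\int_0^t\varphi_{\beta,k}'(\tau)^{1/p}\,d\tau\ge c\,t(\min\{t,k\})^{\beta-1}$ and the constant $c(p,\beta)^{-1}$ grows only polynomially in $\beta$; this incorporates the usual step of discarding the negative part of $u$ inside the singular kernel. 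On the right-hand side, $\big|\lambda g|u|^{p-2}u\,\varphi\big|\le\lambda\|g\|_\infty\,w^{p-1}\varphi_{\beta,k}(w)\le\lambda\|g\|_\infty\,w^{p\beta}$. Writing $v_{\beta,k}:=w\,(\min\{w,k\})^{\beta-1}$, these two bounds give $\llbracket v_{\beta,k}\rrbracket_{s,p}^p\le C(p,\beta)\,\lambda\|g\|_\infty\int_{\mathbb{R}^N}w^{p\beta}\,dx$ and also $\|v_{\beta,k}\|_p^p\le\int_{\mathbb{R}^N}w^{p\beta}\,dx$, all finite once $w\in L^{p\beta}(\mathbb{R}^N)$.

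Next comes the gain of integrability. If $sp<N$, Theorem \ref{theorem:Mazya} yields $\|v_{\beta,k}\|_{p_s^*}^p\le C\llbracket v_{\beta,k}\rrbracket_{s,p}^p$; letting $k\to\infty$, so that $v_{\beta,k}\uparrow w^\beta$ pointwise and $\|v_{\beta,k}\|_{p_s^*}\to\|w^\beta\|_{p_s^*}$ by monotone convergence, one obtains $\|w\|_{\beta p_s^*}\le\big(C(p,\beta)\lambda\|g\|_\infty\big)^{1/(p\beta)}\|w\|_{p\beta}$. Starting from $\beta_0=p_s^*/p>1$, so that $w\in L^{p\beta_0}=\lps$, which is finite since $\wsprc\subseteq\lps$, and iterating with $\beta_{n+1}=(p_s^*/p)\beta_n$, the exponents $p\beta_n\to\infty$ while the product of the constants converges because $\beta_n^{-1}\log C(p,\beta_n)$ is summable; this gives $\|w\|_\infty\le C\|w\|_{p_s^*}<\infty$. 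If $sp=N$ the same scheme works with $\lps$ replaced by $L^{\sigma p}(\mathbb{R}^N)$ for any fixed $\sigma>1$, using the nonhomogeneous embedding $\wspr\hookrightarrow L^{\sigma p}(\mathbb{R}^N)$, valid for every finite exponent in the borderline case, in the form $\|v\|_{\sigma p}^p\le C_\sigma(\|v\|_p^p+\llbracket v\rrbracket_{s,p}^p)$, and with the starting point $w\in L^p(\mathbb{R}^N)$ (indeed $u\in W^{s,p}(\mathbb{R}^N)\subset L^q(\mathbb{R}^N)$ for every finite $q\ge p$).

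I expect the main technical difficulty to be the rigorous justification of the truncation and of the passage $k\to\infty$, together with the algebraic inequality on the left-hand side of \eqref{eq:ws}, in particular the replacement of $u$ by $u_+$ inside the singular kernel and the polynomial-in-$\beta$ control of $c(p,\beta)$, which is exactly what makes the iterated product of constants converge; the borderline case $sp=N$ differs only in that Theorem \ref{theorem:Mazya} must be replaced by the critical Sobolev embedding into $L^{\sigma p}$. A shorter alternative is to quote an off-the-shelf local boundedness estimate for $(-\Delta_p)^s$ and bootstrap it, but since the iteration must be carried out with $|u|^{p-1}$ on the right-hand side this amounts to essentially the same computation.
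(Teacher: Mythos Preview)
Your proposal is correct and follows essentially the same Moser-type iteration as the paper: truncate $u_+$, use a pointwise inequality to control $\llbracket(\text{power of }u_+)\rrbracket_{s,p}$ by the right-hand side of \eqref{eq:ws} bounded via $|\lambda|\|g\|_\infty$, apply the fractional Sobolev inequality, and iterate with ratio $p_s^*/p$. The only cosmetic difference is in the borderline case $sp=N$: the paper picks $\tilde s\in(0,s)$ with $\tilde sp<N$ and uses $\llbracket v\rrbracket_{\tilde s,p}^p\le \llbracket v\rrbracket_{s,p}^p+K\|v\|_p^p$ to reduce to the subcritical case, whereas you invoke the critical embedding $\wspr\hookrightarrow L^{\sigma p}(\mathbb{R}^N)$ directly---both variants are standard and lead to the same conclusion.
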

     
     \begin{proof} We split the proof in two cases.
        
        \medskip
       
	    \noindent {\it Case $sp<N.$}
	    We begin by observing that $(-u,\lambda)$ is an eigenpair 
	    since $(u,\lambda)$ is.
	    Therefore, it is enough to prove that $u_+\in\lir.$
	    
	    Observe that $u_+\in\wsprc$ because 
	    \[
	        |u_+(x)-u_+(y)|\le|u(x)-u(y)| \quad\forall x,y\in\mathbb{R}^N.
	    \]

        We now intend to prove by induction on $n$  that 
  	    \[
  	        u_+\in L^{p\gamma_n}(\mathbb{R}^N),\quad \gamma_n=\left(\frac{N}{N-sp}\right)^n
  	    \]
	    for all $n\in\mathbb{N},$ with the estimate
	    \begin{equation}\label{eq:alinfnorm}
	          \|u_+\|_{\gamma_{n+1}p}
           \le K^{\nicefrac1{\gamma_n}}\gamma_n^{\nicefrac1{\gamma_n}}\|u_+\|_{\gamma_np},
        \end{equation}
	    for some positive constant $K=K(N,s,p,\lambda,\|g\|_{\lir}).$
	    
	    Since $u_+\in\wsprc,$ we have that $u\in L^{p\gamma_1}(\mathbb{R}^N).$
	    Assume now $u_+\in L^{p\gamma_n}(\mathbb{R}^N)$ for some positive integer $n.$
	    We want to show that $u_+\in L^{p\gamma_{n+1}}(\mathbb{R}^N)$.
	    
	    Let us  define, for any positive integer $k$
	    \[
	        v_k(x)=\min\{k,u_+(x)\}.
	    \]  
	    Since $u_+\in\wsprc$ and for any $\alpha>1$
	    \begin{align*}
	         |(v_k(x))^\alpha-(v_k(y))^\alpha|
	        &\le \alpha (v_k(x)+v_k(y))^{\alpha-1}  |v_k(x)-v_k(y)|\\
	        &\le \alpha (2k)^{\alpha-1}  |u_+(x)-u_+(y)| \quad\forall x,y\in\mathbb{R}^N.
        \end{align*}
	    we get $v_k^\alpha \in \lir\cap\wsprc$ for any $\alpha>1.$ 
	    
	    On the other hand, if $u_+(x)\ge u_+(y)$ then $v_k(x)\ge v_k(y)$ and 
	    there is $\theta\in(v_k(y),v_k(x))$ such that
	    \begin{align*}
	         |(v_k(x))^{\gamma_n}&-(v_k(y))^{\gamma_n}|^p
	        \le {\gamma_n}^p \theta^{(\gamma_n-1)p}  |v_k(x)-v_k(y)|^p\\
	        &\le {\gamma_n}^p \theta^{(\gamma_n-1)p}   
	        |u_+(x)-u_+(y)|^{p-2}(u_+(x)-u_+(y))(v_k(x)-v_k(y))\\
	        &\le {\gamma_n}^p \theta^{(\gamma_n-1)p}   
	        |u_+(x)-u_+(y)|^{p-2}(u_+(x)-u_+(y))v_k(x)\\
	        &- {\gamma_n}^p \theta^{(\gamma_n-1)p}   
	        |u_+(x)-u_+(y)|^{p-2}(u_+(x)-u_+(y))v_k(y)\\
	         &\le {\gamma_n}^p 
	        |u_+(x)-u_+(y)|^{p-2}(u_+(x)-u_+(y))v_k(x)^{(\gamma_n-1)p+1}\\
	        &- {\gamma_n}^p 
	        |u_+(x)-u_+(y)|^{p-2}(u_+(x)-u_+(y))v_k(y)^{(\gamma_n-1)p+1}.
        \end{align*}
	    Then, taking $\alpha_n=(\gamma_n-1)p+1,$ we get 
	    \begin{align*}
	         |(v_k(x))^{\gamma_n}&-(v_k(y))^{\gamma_n}|^p\le\\
	        &\le|u_+(x)-u_+(y)|^{p-2}(u_+(x)-u_+(y))[(v_k(x))^{\alpha_n}
	        -(v_k(y))^{\alpha_n}]\\
	        &\le|u(x)-u(y)|^{p-2}(u(x)-u(y))[(v_k(x))^{\alpha_n}
	        -(v_k(y))^{\alpha_n}].
        \end{align*}
        By symmetry we also get the last inequality when $u_+(y)\ge u_+(x).$ 
	    Thus, by Theorem \ref{theorem:Mazya}, there is a positive constant $C=C(N,s,p)$
	    such that
	    \begin{align*}
	        &C^p\|v_k^{\gamma_n}\|_{p_s^*}^p\le \llbracket v_k^{\gamma_n}
	        \rrbracket_{s,p}^p\\
	        &\le \gamma_n^p
	        \int_{\mathbb{R}^N}\int_{\mathbb{R}^N}
	        \dfrac{|u(x)-u(y)|^{p-2}(u(x)-u(y))[(v_k(x))^{\alpha_n}
	        -(v_k(y))^{\alpha_n}]}{|x-y|^{N+sp}}\, dxdy.
        \end{align*}
         Therefore, since $(u,\lambda)$ is an eigenpair, we get
        \[
           C^p\|v_k\|_{\gamma_{n+1}p}^{\gamma_np}= C^p\|v_k^{\gamma_n}\|_{p_s^*}^p
           \le  |\lambda|\gamma_n^p\|g\|_{\lir}
            \|u_+\|_{\gamma_np}^{\gamma_np}.
        \]
        Then there is a positive constant $K=K(N,s,p,\lambda,\|g\|_{\lir})$ 
        such that
        \[
            \|v_k\|_{\gamma_{n+1}p}
           \le K^{\nicefrac1{\gamma_n}}\gamma_n^{\nicefrac1{\gamma_n}}\|u_+\|_{\gamma_np}.
        \]
        Letting $k\to\infty,$ by Fatou's Lemma
        \[
            \|u_+\|_{\gamma_{n+1}p}
           \le K^{\nicefrac1{\gamma_n}}\gamma_n^{\nicefrac1{\gamma_n}}\|u_+\|_{\gamma_np}.
        \]
        Hence $u_+\in L^{\gamma_{n+1}p}(\mathbb{R}^N)$ and 
        \begin{align*}
	        \|u_+\|_{\gamma_{n+1}p}&\le\prod_{i=1}^n K^{\nicefrac1{\gamma_i}}
	        \gamma_i^{\nicefrac1{\gamma_i}}\|u_+\|_{\gamma_1p}= 
	        K^{\displaystyle\sum_{i=1}^{n}\nicefrac1{\gamma_1^i}}
	        \gamma_1^{\displaystyle\sum_{i=1}^{i}\nicefrac{i}{\gamma_1^i}}
	        \|u_+\|_{\gamma_1p}\\
	        &=K^{\frac{\gamma_1^n-1}{(\gamma_1-1)\gamma_1^n}}
	        \gamma_1^{\frac{\gamma_1(\gamma_1^n-1)-n(\gamma_1-1)}{(\gamma_1-1)^2\gamma_1^n}}
	        \|u_+\|_{\gamma_1p}.
        \end{align*}
        Note that 
        \[
            \lim_{n\to \infty} K^{\frac{\gamma_1^n-1}{(\gamma_1-1)\gamma_1^n}}=
            K^{\nicefrac1{(\gamma_1-1)}}\quad\text{and}\quad
            \lim_{n\to \infty}
            \gamma_1^{\frac{\gamma_1(\gamma_1^n-1)-n(\gamma_1-1)}{(\gamma_1-1)^2\gamma_1^n}}
            =\gamma_1^{\nicefrac{\gamma_1}{(\gamma_1-1)^2}}.
        \]
        
        Now, it is easy to check that  $u_+\in \lir.$
        
        \medskip
        
        \noindent {\it Case $sp=N.$} Let $\tilde{s}\in(0,s)$ be such that $\tilde{s}p<N.$ 
        By 
        \cite[Lemma 2.3]{MR3556755} there is a constant $K=K(N,\tilde{s},s,p)$ such that
        \[
            \llbracket v \rrbracket_{\tilde{s},p}^p\le\llbracket v 
            \rrbracket_{\tilde{s},p}^p
            + K\|v\|_p^p \quad\forall v\in\wspr.
        \]
        Thus, by Theorem \ref{theorem:Mazya}, we get that there is a constant 
        $C=C(N,\tilde{s},p)$ such that
        \[ 
            C^p\|v\|_{p_{\tilde{s}}^*}^p
            \le\llbracket v \rrbracket_{\tilde{s},p}^p\le\llbracket v 
            \rrbracket_{\tilde{s},p}^p
            + K\|v\|_p^p \quad\forall v\in\wspr
        \]
        that is
        \[ 
            C^p\|v\|_{p_{\tilde{s}}^*}^p
           \le\llbracket v 
            \rrbracket_{\tilde{s},p}^p
            + K\|v\|_p^p \quad\forall v\in\wspr.
        \]
        
        Now, taking $\gamma_n=\left(\frac{N}{N-\tilde{s}p}\right)^n$
         and proceeding as in the previous case, we can conclude that $u\in\lir.$
    \end{proof}

    Then, by the previous lemma and \cite[Corollary 5.5]{MR3593528} 
    we have the following result.
   
    \begin{lemma}\label{lemma:regularidad2}
	     Assume $sp\le N$ and $g\in \lir.$ If $(u,\lambda)$ is an eigenpair then
	     $u\in \lir\cap C^{\gamma}(\mathbb{R}^N)$ for some $\gamma\in(0,1).$ 
    \end{lemma}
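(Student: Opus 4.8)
The plan is to deduce the statement from Lemma \ref{leama:linfnorm} together with the interior H\"older estimate for bounded weak solutions of equations driven by the fractional $p$-Laplacian established in \cite{MR3593528}, so the argument is essentially a matter of checking that the hypotheses of \cite[Corollary 5.5]{MR3593528} are in force and that the resulting estimates are uniform in the center of the ball.

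First I would use Lemma \ref{leama:linfnorm}: since $sp\le N$ and $g\in\lir$, it gives $u\in\lir$. Consequently the right-hand side of \eqref{eq:ep}, namely $h(x)\coloneqq\lambda g(x)|u(x)|^{p-2}u(x)$, satisfies $h\in\lir$ with $\|h\|_{\lir}\le|\lambda|\,\|g\|_{\lir}\,\|u\|_{\lir}^{p-1}$. Moreover $u$ lies, locally on every ball, in the fractional Sobolev space attached to $(-\Delta_p)^s$: for $sp=N$ this is immediate from $u\in\wspr$, while for $sp<N$ it follows from $u\in\wsprc=\mathcal{X}\!\prescript{s,p}{}(\mathbb{R}^N)$ (Theorem \ref{theorm:c}), which in particular gives $u\in\wspo$ for every bounded $\Omega$. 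Finally, the nonlocal tail of $u$ is controlled uniformly: since $u\in\lir$, for every $x_0\in\mathbb{R}^N$ and $R>0$
\[
	R^{sp}\int_{\mathbb{R}^N\setminus B_R(x_0)}\frac{|u(y)|^{p-1}}{|x_0-y|^{N+sp}}\,dy
	\le R^{sp}\,\|u\|_{\lir}^{p-1}\int_{|z|>R}\frac{dz}{|z|^{N+sp}}=C(N,s,p)\,\|u\|_{\lir}^{p-1}.
\]
Hence $(u,h)$ satisfies, on every ball, the assumptions of \cite[Corollary 5.5]{MR3593528}, which asserts that a bounded local weak solution of $(-\Delta_p)^s u=h$ with $h\in L^\infty$ belongs to $C^\gamma_{\mathrm{loc}}$ for some $\gamma=\gamma(N,s,p)\in(0,1)$, together with a quantitative bound for $[u]_{C^\gamma(B_R(x_0))}$ in terms of $\|u\|_{\lir}$, the tail of $u$, and $\|h\|_{\lir}^{1/(p-1)}$.

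It remains to globalize. Since $(-\Delta_p)^s$ is translation invariant and, by the previous step, all quantities entering the estimate of \cite[Corollary 5.5]{MR3593528} (with the radius normalized to $R=1$, say) are bounded by a constant depending only on $N$, $s$, $p$, $\lambda$, $\|g\|_{\lir}$ and $\|u\|_{\lir}$ — in particular uniformly in the center $x_0$ — the H\"older seminorms of $u$ on unit balls are uniformly bounded. A standard covering argument then upgrades this to $u\in C^\gamma(\mathbb{R}^N)$ with the same exponent, and together with $u\in\lir$ this is the claim. The only delicate point I foresee is reconciling the notion of eigenpair in Definition \ref{def:eigen}, where $(u,\lambda)$ is tested only against $\ccir$, with the precise notion of local weak solution used in \cite{MR3593528}; this is exactly what the local functional-space membership ($u\in\wspr$ on balls) and the uniform tail bound above supply, so beyond that verification the proof is a direct citation.
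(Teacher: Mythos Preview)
Your approach is exactly the one in the paper: the proof there is literally ``by the previous lemma and \cite[Corollary 5.5]{MR3593528}'', and you have supplied the verifications (bounded right-hand side, uniform tail bound, globalization by translation invariance) that make that citation legitimate. There is nothing to add.
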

\section{Principal eigenvalues}\label{PE}
    Now we will collect some relevant properties  of the principal 
    eigenvalues and their eigenfunctions. 
    To simplify matters, in the remainder of this section we write
    $\lambda_1=\lambda_1(g)$ ($\lambda_1^\pm=\lambda_1^\pm(g)$). 
     
	\medskip
	
	Our next lemma follows from the following inequality
	\[
	    \llbracket|u|\rrbracket_{s,p}<\llbracket u\rrbracket_{sp}
	\] 
    for any $u$ such that $u_\pm\not\equiv0.$
      
	\begin{lemma}\label{lemma:PositiveFE1} 
		If $u$ is an eigenfunction associated to $\lambda_1$ ($\lambda_1^\pm$) 
	    then either $u_+\equiv0$ or $u_-\equiv0.$ 
	\end{lemma}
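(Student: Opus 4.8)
The plan is to argue by contradiction, using the variational characterisation of the principal eigenvalues together with the displayed strict inequality $\llbracket|u|\rrbracket_{s,p}<\llbracket u\rrbracket_{s,p}$ valid for any sign-changing $u$.

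First I would note that an eigenfunction $u$ associated with $\lambda_1$ is, after a positive rescaling, a minimiser in the variational formula for $\lambda_1$ given in Theorem~\ref{thm:existence1} (resp.\ Theorem~\ref{teo:Esp>N}). Since $\Phi$ and $\Psi$ are of class $C^1$ on $\wsprc$ (resp.\ $\wspr$) and $\ccir$ is dense, the identity \eqref{eq:ws} extends to all test functions in $\wsprc$ (resp.\ $\wspr$); taking $\varphi=u$ yields $\llbracket u\rrbracket_{s,p}^p=\lambda_1\int_{\mathbb{R}^N}g(x)|u(x)|^p\,dx$. Because $u\not\equiv 0$ forces $\llbracket u\rrbracket_{s,p}^p>0$ and $\lambda_1>0$, we get $t:=\int_{\mathbb{R}^N}g|u|^p\,dx>0$, and then $v:=t^{-1/p}u$ satisfies $\int_{\mathbb{R}^N}g|v|^p\,dx=1$ and $\llbracket v\rrbracket_{s,p}^p=\lambda_1$, i.e.\ $v$ realises the minimum (and $v_\pm\not\equiv0$ iff $u_\pm\not\equiv0$).

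Now suppose, towards a contradiction, that $u_+\not\equiv0$ and $u_-\not\equiv0$, hence also $v_\pm\not\equiv0$. The pointwise bound $\big||v|(x)-|v|(y)\big|\le|v(x)-v(y)|$ shows $|v|$ has finite Gagliardo seminorm, and $|v|$ lies in $\lps$ (resp.\ $\lpr$) because $v$ does, so $|v|\in\wsprc$ (resp.\ $\wspr$) by Theorem~\ref{theorm:c} (resp.\ by definition of $\wspr$); moreover $\int_{\mathbb{R}^N}g\,\big||v|\big|^p\,dx=\int_{\mathbb{R}^N}g|v|^p\,dx=1$, so $|v|$ is admissible in the minimisation. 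Invoking the strict inequality recorded before the statement — which holds precisely because $\{v>0\}$ and $\{v<0\}$ both have positive measure, so on the positive-measure set of pairs $(x,y)$ with $v(x)>0>v(y)$ one has $\big||v|(x)-|v|(y)\big|=|v(x)+v(y)|<|v(x)|+|v(y)|=|v(x)-v(y)|$ — we obtain $\llbracket|v|\rrbracket_{s,p}^p<\llbracket v\rrbracket_{s,p}^p=\lambda_1$, contradicting the minimality of $\lambda_1$. Thus $u_+\equiv0$ or $u_-\equiv0$. The case of $\lambda_1^+$ is verbatim the same, while for $\lambda_1^-$ one normalises instead to $\int_{\mathbb{R}^N}g|u|^p\,dx=-1$ and uses the characterisation $-\lambda_1^-=\min\{\llbracket w\rrbracket_{s,p}^p:\int_{\mathbb{R}^N}g|w|^p\,dx=-1\}$.

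The only point requiring genuine care is the admissibility of $u$ itself as a test function in \eqref{eq:ws}; I expect this to be routine given the $C^1$ regularity of $\Phi$ and $\Psi$, the density of $\ccir$, and the H\"older-type bounds of Section~\ref{NyP}, but it is the step that should be spelled out. Everything else is a one-line combination of the cited strict seminorm inequality with the definition of the principal eigenvalues as minima.
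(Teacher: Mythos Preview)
Your argument is correct and follows exactly the route the paper indicates: the paper's entire proof is the one-line remark that the lemma ``follows from the following inequality $\llbracket|u|\rrbracket_{s,p}<\llbracket u\rrbracket_{s,p}$ for any $u$ such that $u_\pm\not\equiv0$'', and you have simply spelled out the implicit steps (that an eigenfunction for $\lambda_1$ is, after rescaling, a minimiser, and that $|v|$ is admissible). Your flagged concern about using $u$ itself as a test function is indeed routine by density, as you suspected.
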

    
    Moreover, by \cite[Theorems 1.2 and 1.4]{MR3631323}, we get the following results.
    
    \begin{lemma}\label{lemma:PositiveFE2} 
        Assume that $sp<N$ and $g\in \lir\cap\lpsn.$
        If $g_+\not\equiv0$ ($g_\pm\not\equiv0$) and $u$ is an eigenfunction 
        associated to $\lambda_1$ ($\lambda_1^\pm$) then either 
        $u>0$ or $u<0$  in whole $\mathbb{R}^N.$

    \end{lemma}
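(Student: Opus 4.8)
The plan is to obtain the dichotomy from the strong minimum principle for the fractional $p$-Laplacian proved in \cite[Theorems 1.2 and 1.4]{MR3631323}, after first removing the sign of $u$ and absorbing the zero-order term into a nonnegative potential.

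First I would normalize. By Lemma \ref{lemma:PositiveFE1}, any eigenfunction $u$ associated to $\lambda_1$ (resp. to $\lambda_1^{\pm}$) satisfies $u_+\equiv 0$ or $u_-\equiv 0$, so it does not change sign; since $-u$ is again an eigenfunction for the same eigenvalue, we may assume $u\ge 0$, and $u\not\equiv 0$ because $u$ is an eigenfunction. As $sp<N$ and $g\in\lir$, Lemma \ref{lemma:regularidad2} gives $u\in\lir\cap C^{\gamma}(\mathbb{R}^N)$ for some $\gamma\in(0,1)$; thus $u$ is a genuine continuous function and it remains to show that $u$ has no zero in $\mathbb{R}^N$.

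Next I would absorb the potential. Set $c:=|\lambda|\,\|g\|_{\lir}\ge 0$. Since $\lambda g(x)\ge -|\lambda||g(x)|\ge -|\lambda|\,\|g\|_{\lir}$ a.e. and $u\ge 0$, we have $\big(c+\lambda g(x)\big)u(x)^{p-1}\ge 0$ a.e., so testing the weak formulation \eqref{eq:ws} against nonnegative $\varphi$ shows that $u$ is a nonnegative weak supersolution of
\[
    (-\Delta_p)^s u + c\,|u|^{p-2}u \;\ge\; 0 \qquad \text{in } \mathbb{R}^N .
\]
Being bounded and lying in the natural energy space $\wsprc$, $u$ has finite nonlocal tail and hence belongs to the class of weak supersolutions covered by \cite{MR3631323}. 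Applying the strong minimum principle \cite[Theorem 1.4]{MR3631323} (whose proof rests on the Harnack-type estimate \cite[Theorem 1.2]{MR3631323}) to this supersolution yields the alternative: either $u\equiv 0$ in $\mathbb{R}^N$ or $u>0$ in $\mathbb{R}^N$. Since $u\not\equiv 0$, the second alternative holds, and undoing the normalization we conclude that an eigenfunction associated to $\lambda_1$ (resp. $\lambda_1^{\pm}$) is either everywhere positive or everywhere negative.

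The one point that requires care is checking that the hypotheses of \cite{MR3631323} are met \emph{globally} on $\mathbb{R}^N$: one must verify that the tail $\int_{\mathbb{R}^N}\frac{u(y)^{p-1}}{(1+|y|)^{N+sp}}\,dy$ is finite (immediate from $u\in\lir$) and that the zero-order term carries the right sign — which is exactly why, when $\lambda=\lambda_1^-<0$, one works with the supersolution inequality above for the nonnegative coefficient $c$ rather than with the equation directly. Once these checks are in place, the nonlocal nature of the operator lets the strong minimum principle propagate positivity from a single point to all of $\mathbb{R}^N$ in one step, with no connectedness argument needed.
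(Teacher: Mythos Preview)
Your proposal is correct and follows essentially the same approach as the paper: the paper does not write out a proof but simply invokes \cite[Theorems 1.2 and 1.4]{MR3631323}, and you supply exactly the details needed to apply those results (constant sign from Lemma~\ref{lemma:PositiveFE1}, continuity and boundedness from Lemma~\ref{lemma:regularidad2}, and the absorption of the zero-order term into a nonnegative potential so that the strong minimum principle applies).
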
      
    
    \begin{lemma}\label{lemma:PositiveFE3}
	    Let $s\in(0,1)$ and  $p\in(1,\infty)$ be such that $sp\ge N.$ 
        Assume $g=g_1-g_2$ satisfies 
        \begin{itemize}
	        \item $g_1(x)\ge0$ a.e. in $\mathbb{R}^N$ and 
	            $g_1\in\lir\cap L^{\nicefrac{N_0}{sp}}(\mathbb{R}^N)\setminus\{0\},$
	            with $N_0\in\mathbb{N}$ such that $N_0>sp;$
	        \item $g_2(x)\ge\varepsilon>0$ a.e. in $\mathbb{R}^N.$ 
        \end{itemize} 
        If $u$ is an eigenfunction
        associated to $\lambda_1$ then either 
        $u>0$ or $u<0$ a.e. in $\mathbb{R}^N.$
        
        If in addition $g_{2}\in C(\mathbb{R}^N)$ or $g_{2}\in \lir$ then either 
        $u>0$ or $u<0$ in whole $\mathbb{R}^N.$    
	    
    \end{lemma}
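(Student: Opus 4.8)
The plan is to reduce the statement to the Dirichlet setting on large balls, where the results of \cite{MR3556755} and Lemma \ref{lemma:PositiveFE1} already apply, and then upgrade from ``a.e.'' positivity to everywhere positivity using a strong maximum / Harnack-type principle. First I would observe that, by Lemma \ref{lemma:PositiveFE1}, any eigenfunction $u$ associated to $\lambda_1$ satisfies either $u_+\equiv 0$ or $u_-\equiv 0$; replacing $u$ by $-u$ if necessary (which is again an eigenfunction, since the problem is odd and $\int g|{-u}|^p=\int g|u|^p$) I may assume $u\ge 0$ a.e. in $\mathbb{R}^N$ and $u\not\equiv 0$. The goal is then to show $u>0$ a.e., and under the extra continuity/$L^\infty$ hypothesis on $g_2$, that $u>0$ everywhere.

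The key step is to exploit the simplicity of $\lambda_1$ (Theorem \ref{teo:Esp>N}) together with the variational characterization: if $u\ge 0$ minimizes $\llbracket\cdot\rrbracket_{s,p}^p$ on $\mathcal N$, then so does $|u|$, and by simplicity the nonnegative minimizer is essentially unique up to sign, so it suffices to exhibit one nonnegative eigenfunction that is a.e. positive. To get a.e. positivity I would argue by contradiction: suppose the (measurable) set $Z=\{u=0\}$ has positive measure. Testing the weak formulation \eqref{eq:ws} against $\varphi$ supported near a density point of $Z$, the left-hand side picks up a strictly negative contribution
\[
-2\int_{\mathbb{R}^N}\int_{\mathbb{R}^N}\frac{|u(x)-u(y)|^{p-2}(u(x)-u(y))}{|x-y|^{N+sp}}\big(\text{sign terms}\big)\,dx\,dy,
\]
more precisely: for $x$ with $u(x)=0$ one has $(-\Delta_p)^s u(x) = -2\int u(y)^{p-1}|x-y|^{-N-sp}\,dy \le 0$, with strict inequality unless $u\equiv 0$; but the right-hand side $\lambda_1 g(x)|u(x)|^{p-2}u(x)=0$ there. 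This forces $u\equiv 0$ on a neighborhood and, by connectedness of $\mathbb{R}^N$ and iteration, $u\equiv 0$, a contradiction. This is essentially the nonlocal strong maximum principle; alternatively one invokes \cite[Theorems 1.2 and 1.4]{MR3631323} as already done in Lemma \ref{lemma:PositiveFE2}, after checking that under the hypotheses here $u$ is a nonnegative weak supersolution of $(-\Delta_p)^s u + c(x)|u|^{p-2}u = 0$ with $c = \lambda_1 g_2 \in L^\infty_{loc}$ (using $g_2\ge\varepsilon$) plus a nonnegative lower-order term $\lambda_1 g_1|u|^{p-1}\ge 0$, which is exactly the structure covered there.

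For the last assertion, the extra hypothesis $g_2\in C(\mathbb{R}^N)$ or $g_2\in L^\infty(\mathbb{R}^N)$ (combined with $g_1\in L^\infty$, Lemma \ref{leama:linfnorm}, and Lemma \ref{lemma:regularidad2}, which give $u\in L^\infty\cap C^\gamma_{loc}$) puts us in the hypotheses of the nonlocal Harnack inequality / strong maximum principle of \cite{MR3631323} in its pointwise-everywhere form: a nonnegative, continuous weak solution of $(-\Delta_p)^s u = V u^{p-1}$ with $V=\lambda_1 g$ bounded below cannot vanish at an interior point without vanishing identically. Since we have already ruled out $u\equiv 0$, we conclude $u>0$ in all of $\mathbb{R}^N$. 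The main obstacle I expect is the first bullet: making the a.e.-positivity argument fully rigorous in the $sp\ge N$ regime, since here $u$ lives in $\wspr$ rather than in a space with a Sobolev embedding into a Lebesgue space of high exponent, so the nonlocal maximum principle must be applied on arbitrary large balls $B_R$ (where $u$ restricted is a Dirichlet supersolution) rather than globally in one shot; patching the resulting ball-by-ball positivity into global positivity is where the connectedness/covering argument is needed, and one must be careful that the lower-order coefficient $c=\lambda_1 g_2$ has the right sign and integrability on each $B_R$, which it does because $g_2\ge\varepsilon>0$.
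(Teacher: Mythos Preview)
Your ``alternative'' route --- invoking \cite[Theorems 1.2 and 1.4]{MR3631323} directly once Lemma \ref{lemma:PositiveFE1} has reduced to $u\ge 0$, $u\not\equiv 0$ --- is precisely the paper's proof; the paper gives no argument beyond that citation. Your verification of the structural hypotheses (rewriting the equation as $(-\Delta_p)^s u + \lambda_1 g_2\,u^{p-1} = \lambda_1 g_1\,u^{p-1}\ge 0$ so that $u$ is a nonnegative weak supersolution with zero--order coefficient $\lambda_1 g_2\ge \lambda_1\varepsilon>0$, and noting that $g_2\in L^\infty$ or $g_2\in C$ together with Lemmas \ref{lemma:regularidad1}--\ref{lemma:regularidad2} supplies the regularity needed to pass from a.e.\ to everywhere positivity) is a useful elaboration the paper leaves implicit.

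Two warnings about the rest of your plan. First, your appeal to the simplicity of $\lambda_1$ from Theorem \ref{teo:Esp>N} is circular: in the paper's logical order, simplicity (Lemma \ref{lemma:Simple} and Theorem \ref{theorm:PrincipalAndSimple2}) is established \emph{after} and \emph{using} Lemma \ref{lemma:PositiveFE3}, so you cannot invoke it here. This does no real damage, since your contradiction argument via the nonlocal maximum principle applies to every nonnegative eigenfunction, not just to one distinguished minimizer; simply drop the simplicity step. Second, the reduction to the Dirichlet problem on large balls and the reference to \cite{MR3556755} are unnecessary detours: the strong minimum principle in \cite{MR3631323} is local in nature and applies directly to weak supersolutions on open subsets of $\mathbb{R}^N$, so no ball--by--ball patching is required.
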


    The proof of the result given below follows from a careful reading of 
    \cite[proof of Theorem 4.8]{MR3556755}
    
     \begin{lemma}\label{lemma:Simple} 
        If $u$ is an eigenfunction associated to
        $\lambda_1$ ($\lambda_1^\pm$) such that $u>0$ a.e. in $\mathbb{R}^N$ 
	    and $\lambda\ge \lambda_1$ ($\lambda\ge \pm\lambda_1^\pm$) 
	    is such that there exists a nonnegative eigenfunction 
	    $v$ associated to $\lambda$ ($\pm\lambda$)
	    then $\lambda=\lambda_1$ ($\lambda=\pm\lambda_1^\pm$) 
	    and there is $k\in\mathbb{R}$ such that $v = ku_1$
	    ($v = ku_1^\pm$) in $\mathbb{R}^N.$ 
	 \end{lemma}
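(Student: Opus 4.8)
The plan is to run a nonlocal Picone argument in the spirit of \cite[Proof of Theorem 4.8]{MR3556755}, but with the roles of the two eigenfunctions (numerator versus denominator) exchanged, so as to force $\lambda\le\lambda_1$; combined with the standing hypothesis $\lambda\ge\lambda_1$ this yields $\lambda=\lambda_1$, after which the equality case of Picone's inequality gives the proportionality. It is convenient to reduce the bookkeeping first: the statement for $\lambda_1^-$ follows from the one for $\lambda_1^+$ upon replacing $g$ by $-g$ (an eigenpair $(w,\mu)$ for $g$ corresponds to the eigenpair $(w,-\mu)$ for $-g$, and $-\lambda_1^-(g)=\lambda_1^+(-g)$), and the case $\lambda_1^+$ is proved exactly as the case $\lambda_1$. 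So from now on I work with $u>0$ an eigenfunction of $\lambda_1>0$ and $v\ge0$, $v\not\equiv0$, an eigenfunction of some $\lambda\ge\lambda_1$. By Section \ref{HR}, $u,v\in\lir$; since $v\ge0$, $v\not\equiv0$ solves $(-\Delta_p)^sv-\lambda g|v|^{p-2}v=0$ with zeroth-order coefficient $\lambda g\in\lir$, the strong maximum principle for $(-\Delta_p)^s$ (see \cite{MR3631323}, as in Lemmas \ref{lemma:PositiveFE2}--\ref{lemma:PositiveFE3}) upgrades this to $v>0$ a.e.\ in $\mathbb{R}^N$. Finally, from $\llbracket u\rrbracket_{s,p}^p=\lambda_1\int_{\mathbb{R}^N}g|u|^p\,dx$ together with $u\not\equiv0$ and $\lambda_1>0$ we get $0<\int_{\mathbb{R}^N}g|u|^p\,dx\le\int_{\mathbb{R}^N}|g||u|^p\,dx<\infty$, the last finiteness because $u$ lies in the admissible manifold ($\mathcal{M}$, resp.\ $\mathcal{N}$).

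For the core step, for $\varepsilon>0$ put $v_\varepsilon\coloneqq v+\varepsilon$ and $\psi_\varepsilon\coloneqq u^p/v_\varepsilon^{p-1}$. Using the $L^\infty$ bounds and the decay at infinity of $u$ and $v$, one checks that $\psi_\varepsilon$ belongs to $\wsprc$ (resp.\ $\wspr$), hence is admissible in the weak formulation of the equation for $v$ after extending it from $\ccir$ by density. Applying the discrete Picone inequality of \cite{Amghibech} pointwise to $a_i=u$, $b_i=v_\varepsilon$ (and using $v_\varepsilon(x)-v_\varepsilon(y)=v(x)-v(y)$), then dividing by $|x-y|^{N+sp}$ and integrating, gives
\[
\llbracket u\rrbracket_{s,p}^p\ \ge\ \int_{\mathbb{R}^N}\int_{\mathbb{R}^N}
\frac{|v(x)-v(y)|^{p-2}(v(x)-v(y))\,(\psi_\varepsilon(x)-\psi_\varepsilon(y))}{|x-y|^{N+sp}}\,dxdy
\ =\ \lambda\int_{\mathbb{R}^N}g\,v^{p-1}\frac{u^p}{v_\varepsilon^{p-1}}\,dx,
\]
the last equality being the weak formulation for $(v,\lambda)$. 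Since $v>0$ a.e., we have $v^{p-1}/v_\varepsilon^{p-1}\to1$ a.e.\ with $0\le v^{p-1}/v_\varepsilon^{p-1}\le1$, and $g\,u^p\in L^1(\mathbb{R}^N)$, so dominated convergence gives $\llbracket u\rrbracket_{s,p}^p\ge\lambda\int_{\mathbb{R}^N}g|u|^p\,dx$. Dividing by $\int_{\mathbb{R}^N}g|u|^p\,dx>0$ and recalling $\llbracket u\rrbracket_{s,p}^p=\lambda_1\int_{\mathbb{R}^N}g|u|^p\,dx$, we obtain $\lambda\le\lambda_1$, hence $\lambda=\lambda_1$ by hypothesis.

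For the proportionality I would exploit the equality case of Picone's inequality. Let $D_\varepsilon(x,y)\ge0$ denote the pointwise Picone defect of $u$ and $v_\varepsilon$, so that $\int\!\!\int D_\varepsilon(x,y)\,|x-y|^{-N-sp}\,dxdy=\llbracket u\rrbracket_{s,p}^p-\lambda\int_{\mathbb{R}^N}g\,v^{p-1}u^p v_\varepsilon^{1-p}\,dx$; by the previous step and $\lambda=\lambda_1$ the right-hand side tends to $0$ as $\varepsilon\to0$. Since $v>0$ a.e., $D_\varepsilon\to D_0$ a.e., where $D_0\ge0$ is the Picone defect of $u$ and $v$, so Fatou's lemma forces $\int\!\!\int D_0(x,y)\,|x-y|^{-N-sp}\,dxdy=0$, i.e.\ $D_0\equiv0$ a.e. The equality case of Picone's inequality then gives $u(x)/v(x)=u(y)/v(y)$ for a.e.\ $(x,y)$, so $u/v$ equals a.e.\ a constant $c>0$ and $v=ku$ with $k=1/c$; since $u$ and $u_1$ are both positive eigenfunctions of $\lambda_1$, the same computation applied with $u_1$ in place of $v$ gives $u=k'u_1$, whence $v=(kk')u_1$.

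The main obstacle is not the algebra but the two soft points above in the unbounded setting. First, verifying $\psi_\varepsilon=u^p/v_\varepsilon^{p-1}\in\wsprc$ (resp.\ $\wspr$): this requires the $L^\infty$ bound of Section \ref{HR} together with control of the tails of $u$ and $v$ at infinity, where the embeddings of Section \ref{NyP} (and Theorem \ref{theorm:embedding} with Corollary \ref{corollary:embedding} when $sp\ge N$) are used to guarantee the needed integrability; in \cite[Proof of Theorem 4.8]{MR3556755} the analogous function is handled on a bounded domain and no integrability at infinity is at stake. Second, making rigorous the two passages to the limit $\varepsilon\to0$ — dominated convergence for the identity and Fatou for the defect — both of which rely crucially on $v>0$ a.e., hence on the strong maximum principle invoked above.
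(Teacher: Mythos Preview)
Your proposal is correct and follows precisely the route the paper indicates: the discrete Picone argument of \cite{Amghibech} as in \cite[Proof of Theorem 4.8]{MR3556755}, adapted to $\mathbb{R}^N$ by adding $\varepsilon$ to the denominator eigenfunction, testing, and passing to the limit; the equality case of Picone then yields the proportionality. The two ``soft points'' you flag (membership of $\psi_\varepsilon$ in the right space, and the limit passages) are exactly the adaptations required over the bounded-domain proof and are handled with the $L^\infty$ bounds of Section~\ref{HR} together with the integrability $|g|\,|u|^p\in L^1(\mathbb{R}^N)$ coming from the ambient hypotheses, so there is no genuine gap.
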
    
    
    Then, by Lemmas \ref{lemma:PositiveFE1}, \ref{lemma:PositiveFE2}, 
    \ref{lemma:PositiveFE3} 
    and \ref{lemma:Simple}, we have the next two theorems that give the last part of our main theorems.

    \begin{theorem}\label{theorm:PrincipalAndSimple1}
        If  $sp<N,$  $g\in \lir\cap\lpsn,$
        $g_+\not\equiv0$ ($g_\pm\not\equiv0$) 
        then $\lambda_1$ ($\lambda_1^\pm$) is simple 
        and all its eigenfunctions are of constant sign.                 
    \end{theorem}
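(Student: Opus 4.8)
The plan is to deduce the theorem directly from Lemmas \ref{lemma:PositiveFE1}, \ref{lemma:PositiveFE2} and \ref{lemma:Simple}; no new analysis is required, only a careful assembly of what has already been proved. I will carry out the argument for $\lambda_1$ in detail, the case of $\lambda_1^\pm$ being the same up to notation.

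\textbf{Constant sign.} Let $u$ be any eigenfunction associated to $\lambda_1$. Since $sp<N$, $g\in\lir\cap\lpsn$ and $g_+\not\equiv0$, Lemma \ref{lemma:PositiveFE2} applies to $u$ and gives at once that $u>0$ in all of $\mathbb{R}^N$ or $u<0$ in all of $\mathbb{R}^N$; in particular every eigenfunction of $\lambda_1$ has constant sign. (A two-step route is also available: Lemma \ref{lemma:PositiveFE1} first yields $u_+\equiv0$ or $u_-\equiv0$, and then Lemma \ref{lemma:PositiveFE2} upgrades this to a strict sign.)

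\textbf{Simplicity.} Let $u_1\in\wsprc$ be a minimizer realizing $\lambda_1$ in its variational characterization in Theorem \ref{thm:existence1}; it is an eigenfunction for $\lambda_1$, and by the previous step we may take $u_1>0$ in $\mathbb{R}^N$. Given an arbitrary eigenfunction $u$ of $\lambda_1$, replace $u$ by $-u$ if necessary so that $u\ge0$, hence (again by the previous step) $u>0$. Now invoke Lemma \ref{lemma:Simple} with its positive eigenfunction taken to be $u_1$, with $\lambda=\lambda_1$ (so that the hypothesis $\lambda\ge\lambda_1$ holds with equality), and with its nonnegative eigenfunction $v$ taken to be $u$: the conclusion is $u=k\,u_1$ for some $k\in\mathbb{R}$. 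Thus the $\lambda_1$-eigenspace is one-dimensional, that is, $\lambda_1$ is simple.

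For $\lambda_1^\pm$ one repeats this verbatim, using the corresponding constrained minimization in Theorem \ref{thm:existence1} to produce positive minimizers $u_1^\pm$ and invoking the $\pm$-versions of Lemmas \ref{lemma:PositiveFE2} and \ref{lemma:Simple}. The only point needing a little care, and the closest thing here to an obstacle, is bookkeeping the signs in the hypothesis $\lambda\ge\pm\lambda_1^\pm$ of Lemma \ref{lemma:Simple} on the $\lambda_1^-$ branch, where $\lambda_1^-<0$ while $-\lambda_1^->0$ and the normalization of the eigenfunction carries the opposite sign; this is routine and uses no new idea. Consequently I anticipate no real difficulty: the substance of the theorem is entirely contained in the three cited lemmas.
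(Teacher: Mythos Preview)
Your proposal is correct and follows exactly the paper's approach: the paper itself gives no detailed proof of this theorem, but simply states that it follows from Lemmas \ref{lemma:PositiveFE1}, \ref{lemma:PositiveFE2} and \ref{lemma:Simple}. Your write-up is in fact a more explicit assembly of these ingredients than what appears in the paper.
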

    
     \begin{theorem}\label{theorm:PrincipalAndSimple2}
        If $sp\ge N$  and $g=g_1-g_2$ satisfies 
        \begin{itemize}
	        \item $g_1(x)\ge0$ a.e. in $\mathbb{R}^N$ and 
	            $g_1\in\lir\cap L^{\nicefrac{N_0}{sp}}(\mathbb{R}^N)\setminus\{0\},$
	            with $N_0\in\mathbb{N}$ such that $N_0>sp;$
	        \item $g_2(x)\ge\varepsilon>0$ a.e. in $\mathbb{R}^N,$ 
        \end{itemize} 
        then $\lambda_1$  is simple 
        and all its eigenfunctions are of constant sign.                 
    \end{theorem}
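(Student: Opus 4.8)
The plan is to read the statement off the preparatory Lemmas \ref{lemma:PositiveFE1}, \ref{lemma:PositiveFE3} and \ref{lemma:Simple}: essentially all of the analytic content has already been localized there, so what remains is to assemble the pieces and to check that the hypotheses on $g=g_1-g_2$ imposed here are precisely those under which those lemmas were stated. Throughout, let $u_1\in\wspr$ be the minimizing eigenfunction furnished by Theorem \ref{teo:Esp>N} (so that $\llbracket u_1\rrbracket_{s,p}^p=\lambda_1$ and $\int_{\mathbb{R}^N}g\,|u_1|^p\,dx=1$).

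\emph{Constant sign.} Let $u$ be an arbitrary eigenfunction associated with $\lambda_1$. By Lemma \ref{lemma:PositiveFE1} either $u_+\equiv0$ or $u_-\equiv0$, i.e.\ $u\ge0$ a.e.\ or $u\le0$ a.e.\ in $\mathbb{R}^N$. Under the present hypotheses on $g_1,g_2$, Lemma \ref{lemma:PositiveFE3} then forces the strict alternative $u>0$ a.e.\ in $\mathbb{R}^N$ or $u<0$ a.e.\ in $\mathbb{R}^N$ (and $u>0$ or $u<0$ in all of $\mathbb{R}^N$ when in addition $g_2\in C(\mathbb{R}^N)$ or $g_2\in\lir$). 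This already yields the second assertion of the theorem.

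\emph{Simplicity.} Let $w$ be any eigenfunction associated with $\lambda_1$. By the previous paragraph, after replacing $w$ by $-w$ if necessary (still an eigenfunction for $\lambda_1$), we may assume $w\ge0$. We then apply Lemma \ref{lemma:Simple} with the a.e.\ positive eigenfunction $u_1$ (positive a.e.\ by the first step), the value $\lambda=\lambda_1\ge\lambda_1$, and the nonnegative eigenfunction $v=w$; the lemma produces $k\in\mathbb{R}$ with $w=k\,u_1$. Hence every eigenfunction of $\lambda_1$ is a real multiple of $u_1$, so the associated eigenspace is one-dimensional and $\lambda_1$ is simple, which completes the proof.

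\emph{Main obstacle.} In this assembly there is essentially no obstacle left: the strong-minimum-principle type statements behind Lemma \ref{lemma:PositiveFE3} (from \cite{MR3631323}) and the discrete-Picone argument behind Lemma \ref{lemma:Simple} (from \cite{MR3556755}) carry all the weight. The only points that require care are purely bookkeeping — verifying that the assumptions on $g_1,g_2$ made here match those of Lemmas \ref{lemma:PositiveFE3} and \ref{lemma:Simple}, and noting that a.e.\ positivity of $u_1$ (rather than strict positivity everywhere) suffices to invoke Lemma \ref{lemma:Simple}, which it does since that lemma is phrased with the a.e.\ hypothesis, precisely because in the regime $sp\ge N$ one cannot in general expect strict positivity everywhere without extra regularity of $g_2$.
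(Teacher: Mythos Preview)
Your proposal is correct and follows essentially the same route as the paper: the paper simply records that Theorem \ref{theorm:PrincipalAndSimple2} follows by combining Lemmas \ref{lemma:PositiveFE1}, \ref{lemma:PositiveFE3} and \ref{lemma:Simple}, and your write-up is precisely that assembly spelled out in detail. The only cosmetic point is that you should make explicit that the ``first step'' also applies to the distinguished minimizer $u_1$ (so that, after possibly replacing $u_1$ by $-u_1$, it is a.e.\ positive) before invoking Lemma \ref{lemma:Simple}; this is implicit in what you wrote but worth stating.
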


\section{Decay estimates}\label{DE} 
   Finally,  we study the decay rate at infinity of
   \begin{itemize}
	    \item all positive eigenfunctions associated to $\lambda_1(g)$ 
	        in the case $sp\ge N;$
	    \item all positive ground state solutions of the autonomous 
	        Schr\"odinger equations in the case $sp< N.$ 
   \end{itemize}   
   For this reason, we give an nonlinear version of \cite[Lemma 2.1]{MR3122168}.
   
   \begin{lemma}\label{lemma:decay}
        Let $\Upsilon\in C^2(\mathbb{R}^N)$ be a positive function such that 
        $\Upsilon$ is radially symmetric and decreasing. 
        Assume also
        that 
        \begin{equation}\label{eq:H}
        	  \Upsilon(x)\le\dfrac{C_1}{|x|^\alpha},\quad 0\neq|\nabla 
        	 \Upsilon(x)|\le 
            \dfrac{C_2}{|x|^{\alpha+1}}, \quad |D^2 \varphi(x)|\le 
            \dfrac{C_3}{|x|^{\alpha+2}}
        \end{equation}
        for some $\alpha>0$ and for $|x|$ large enough. Then there is
        $k>>1$ such that
        \begin{equation}\label{eq:I}
	        |(-\Delta_p)^s\Upsilon(x)|\le
            \begin{cases}
	            \dfrac{c_1}{|x|^{\alpha(p-1)+ps}}&\text{ if } 
	            \alpha(p-1)<N,\\[10pt]
	            \dfrac{c_2\log(|x|)}{|x|^{N+ps}}&\text{ if } 
	            \alpha(p-1)=N,\\[10pt]
	            \dfrac{c_3}{|x|^{N+ps}}&\text{ if } \alpha(p-1)>N,
            \end{cases}
        \end{equation} 
        for all $|x|\ge k.$ Here $c_1,c_2,c_3$ are positive 
        constants
        that depend only on $\alpha,s,p, N$ and 
        $\|\Upsilon\|_{C^2(\mathbb{R}^N)}.$
        
        Moreover if $\alpha(p-1)>N$ and $|x|>k,$ we have
        \begin{equation}\label{eq:Ip}
        	(-\Delta_p)^s\Upsilon(x)\le-\dfrac{c_4}{|x|^{N+sp}}.
        \end{equation}
        for some positive constant $c_4.$
    \end{lemma}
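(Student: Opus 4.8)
The plan is to estimate the principal value integral defining $(-\Delta_p)^s\Upsilon(x)$ by splitting $\mathbb{R}^N$ into three regions relative to a large point $x$ with $|x|=r$: the ``near'' region $B_{r/2}(x)$ where $y$ is close to $x$, the ``far from origin, far from $x$'' region, and the ``near the origin'' region $B_{r/2}(0)$. For $|x|$ large, write
\[
(-\Delta_p)^s\Upsilon(x)=2\int_{\mathbb{R}^N}\frac{|\Upsilon(x)-\Upsilon(y)|^{p-2}(\Upsilon(x)-\Upsilon(y))}{|x-y|^{N+sp}}\,dy.
\]
In $B_{r/2}(x)$ I would Taylor-expand $\Upsilon$ around $x$: using \eqref{eq:H}, $|\Upsilon(x)-\Upsilon(y)|\le |\nabla\Upsilon(x)||x-y|+\tfrac12\sup|D^2\Upsilon||x-y|^2\lesssim r^{-\alpha-1}|x-y|+r^{-\alpha-2}|x-y|^2$, and the principal value symmetrizes the linear term so that the second-order term controls the integrand; this region contributes $O(r^{-(\alpha+1)(p-1)-sp+p})=O(r^{-\alpha(p-1)-sp-(p-1)+p})$, which is $O(r^{-\alpha(p-1)-sp})$ up to lower order, absorbed in all three cases. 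The genuinely dominant contribution comes from the region near the origin and the intermediate annulus: there $\Upsilon(x)$ is small compared to $\Upsilon(y)$, $|x-y|\approx r$, and so the integrand is $\approx -\Upsilon(y)^{p-1}/r^{N+sp}$; integrating $\Upsilon(y)^{p-1}\lesssim |y|^{-\alpha(p-1)}$ over $B_{r/2}(0)$ gives exactly the three cases in \eqref{eq:I} according to whether $\alpha(p-1)<N$ (integral $\sim r^{N-\alpha(p-1)}$, so total $\sim r^{-\alpha(p-1)-sp+N}\cdot r^{-N}$... wait, need care), $=N$ (logarithmic), or $>N$ (integral converges).

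More precisely, for the upper bound \eqref{eq:I} I would bound $|\Upsilon(x)-\Upsilon(y)|^{p-1}$ crudely: in the far region use $|\Upsilon(x)-\Upsilon(y)|\le\Upsilon(y)\lesssim|y|^{-\alpha}$ when $|y|\le r/2$, and $|x-y|\gtrsim r$; the resulting integral $\int_{|y|\le r/2}|y|^{-\alpha(p-1)}\,dy$ trichotomizes as above, and dividing by $r^{N+sp}$ yields the claimed bounds (in the case $\alpha(p-1)<N$ one gets $r^{N-\alpha(p-1)}/r^{N+sp}=r^{-\alpha(p-1)-sp}$, matching the first line). For $r/2\le|y|$ with $|x-y|\ge r/2$ one uses $\Upsilon(y)\le\Upsilon(x)\lesssim r^{-\alpha}$ together with the decay of $|x-y|^{-N-sp}$, contributing $O(r^{-\alpha(p-1)-sp})$. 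Collecting all pieces gives \eqref{eq:I}.

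For the refined lower bound \eqref{eq:Ip} in the case $\alpha(p-1)>N$: here the near-origin integral $\int_{\mathbb{R}^N}\Upsilon(y)^{p-1}\,dy$ over any fixed large ball is a strictly positive constant $\kappa>0$, independent of $r$. Since $\Upsilon$ is radial decreasing and positive, for $y$ in a fixed ball $B_\rho(0)$ and $|x|=r$ large we have $\Upsilon(x)-\Upsilon(y)<0$ and $|\Upsilon(x)-\Upsilon(y)|^{p-2}(\Upsilon(x)-\Upsilon(y))\le -\tfrac12\Upsilon(y)^{p-1}$ for $r$ large (since $\Upsilon(x)\to0$), while $|x-y|\le r+\rho\le 2r$; hence that piece alone is $\le -c\,\kappa\,r^{-N-sp}$. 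All the other pieces were already shown to be $O(r^{-\alpha(p-1)-sp})=o(r^{-N-sp})$ because $\alpha(p-1)>N$, so they cannot overcome the negative main term for $r=|x|>k$ with $k$ large. This yields \eqref{eq:Ip}.

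The main obstacle is the bookkeeping in the near region $B_{r/2}(x)$: one must exploit the principal-value cancellation of the first-order term $\nabla\Upsilon(x)\cdot(x-y)$ carefully (the naive bound $|\nabla\Upsilon(x)||x-y|$ is not integrable against $|x-y|^{-N-sp}$ near $x$ once multiplied out after taking the $(p-1)$ power, for $p$ close to $1$), so one splits into $\{|x-y|\le r^{-1}\text{-scale}\}$ versus the rest, or uses the second-order Taylor remainder after antisymmetrization. This is exactly the place where the hypotheses on $|\nabla\Upsilon|$ and $|D^2\Upsilon|$ in \eqref{eq:H} are used, and it is the nonlinear analogue of the corresponding estimate in \cite[Lemma 2.1]{MR3122168}; the remaining regions are comparatively routine.
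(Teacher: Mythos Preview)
Your approach is the same as the paper's: the paper splits the integral over $\mathbb{R}^N$ into four pieces ($|y|>\tfrac32|x|$, the annulus $\tfrac12|x|\le|y|\le\tfrac32|x|$ minus $B_{|x|/2}(x)$, the ball $B_{|x|/2}(x)$, and $B_{|x|/2}(0)$), obtains the trichotomy in \eqref{eq:I} exactly from the behaviour of $\int_{B_{|x|/2}(0)}\Upsilon^{p-1}$ as you describe, and derives \eqref{eq:Ip} by the same argument you give (the fixed-ball integral near the origin produces a strictly negative term of exact order $|x|^{-N-sp}$, and the remaining pieces are $O(|x|^{-\alpha(p-1)-sp})=o(|x|^{-N-sp})$ when $\alpha(p-1)>N$). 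The only substantive difference is in handling the near region $I_3=\int_{B_{|x|/2}(x)}$: rather than carrying out the Taylor-plus-PV-cancellation by hand, the paper invokes \cite[Lemma~3.6]{KORVENPAA2017}, which supplies ready-made bounds in three $p$-ranges ($p\ge2$, $\tfrac{2}{2-s}<p<2$, $1<p\le\tfrac{2}{2-s}$) in terms of $\sup_{B_{|x|/2}(x)}|D^2\Upsilon|$ and $\sup_{B_{|x|/2}(x)}|\nabla\Upsilon|$; inserting \eqref{eq:H} then gives $|I_3|\le C|x|^{-\alpha(p-1)-sp}$ uniformly in $p$. Your heuristic exponent ``$-(\alpha+1)(p-1)-sp+p$'' contains an arithmetic slip (the correct outcome is $-(\alpha+1)(p-1)-sp+(p-1)=-\alpha(p-1)-sp$, so your stated conclusion is actually right), but more to the point the naive first-order bound is not absolutely integrable near $y=x$ once $p\le\tfrac{1}{1-s}$, so citing \cite{KORVENPAA2017} is the cleanest way to close that step across all $p>1$.
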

   
   \begin{proof}
	    By \cite[Lemma 3.6]{KORVENPAA2017}, we have that  
	    $(-\Delta_p)^s\Upsilon(x)$
	    is finite for all $|x|$ large enough.
	    
	    Now we proceed as in the proof of Lemma 2.1 in \cite{MR3122168}.
	    From now on $|x|$ is large enough. 
	    \begin{equation}\label{eq:I0}
	        \begin{aligned}
	            (-\Delta_p)^s\Upsilon(x)
	            &=\int_{\mathbb{R}^N}\dfrac{|\Upsilon(x)-\Upsilon(y)|^{p-2}
	            (\Upsilon(x)-\Upsilon(y))}{|x-y|^{N+ps}}dy\\
	            &=\int_{|y|>\frac{3}{2}|x|} 
	            \dfrac{|\Upsilon(x)-\Upsilon(y)|^{p-2}
	            (\Upsilon(x)-\Upsilon(y))}{|x-y|^{N+ps}}dy\\
	            &+\int_{\left\{\frac{|x|}{2}\le|y|\le\frac{3}{2}|x|\right\}
	            \setminus B_{\frac{|x|}2}(x)}
	            \dfrac{|\Upsilon(x)-\Upsilon(y)|^{p-2}
	            (\Upsilon(x)-\Upsilon(y))}{|x-y|^{N+ps}}dy\\
	            &+\int_{B_{\frac{|x|}2}(x)}
	            \dfrac{|\Upsilon(x)-\Upsilon(y)|^{p-2}
	            (\Upsilon(x)-\Upsilon(y))}{|x-y|^{N+ps}}dy\\
	            &+\int_{B_{\frac{|x|}2}(0)}
	            \dfrac{|\Upsilon(x)-\Upsilon(y)|^{p-2}
	            (\Upsilon(x)-\Upsilon(y))}{|x-y|^{N+ps}}dy\\
	            &=I_1+I_2+I_3+I_4.
            \end{aligned}
        \end{equation}

	    If $|y|>\dfrac32|x|$ then $\Upsilon(y)<\Upsilon(x)$ and therefore
	    \begin{equation}\label{eq:I1}
	        \begin{aligned}
	           0\le I_1&\le |\Upsilon(x)|^{p-1}\int_{|y|>\frac{3}{2}|x|} 
	           \dfrac{dy}{|x-y|^{N+sp}}\\
	           &=C(N,s,p)\dfrac{|\Upsilon(x)|^{p-1}}{|x|^{ps}}
	           \le \dfrac{C(N,s,p,C_1,\alpha)}{|x|^{\alpha(p-1)+ps}}.
	        \end{aligned}
        \end{equation}
        
        In the same way
	    \begin{equation}\label{eq:I2}
	        \begin{aligned}
	            |I_2|&\le \left|\Upsilon\left(\dfrac{x}2\right)\right|^{p-1}
	           \int_{\left\{\frac{|x|}{2}\le|y|\le\frac{3}{2}|x|\right\}
	            \setminus B_{\frac{|x|}2}(x)}\dfrac{1}{|x-y|^{N+ps}}dy
	           \le \dfrac{C(N,s,p,C_1,\alpha)}{|x|^{\alpha(p-1)+ps}}.
	        \end{aligned}
        \end{equation}
        
        On the other hand, if $|y|\le\dfrac{|x|}2$ 
        then $\Upsilon(x)\le\Upsilon(y)$
        and $|x-y|>\dfrac{|x|}2.$ Therefore
			\begin{align*}
	            |I_4|&\le \dfrac{2^{N+ps}}{|x|^{N+ps}}
	            \int_{B_{\frac{|x|}2}(0)}|\Upsilon(y)|^{p-1}dy\\
	            &\le \dfrac{C(N,s,p)}{|x|^{N+ps}}
	            \int_{B_{\frac{|x|}2}(0)}|\Upsilon(y)|^{p-1}dy
	            =\dfrac{C(N,s,p)}{|x|^{N+ps}}
	            I_4'.
	        \end{align*}
		As in \cite{MR3122168}, we have the following estimate for $|x|$ large 
	    enough
	    \begin{equation}\label{eq:I4}
	        \begin{aligned}
	            &\bullet\text{If } \alpha(p-1)>N \text{ then } 
	            I_4'<\infty \text{ and }
	            |I_4|\le\dfrac{\Gamma_1}{|x|^{N+sp}};\\
	          &\bullet\text{If } \alpha(p-1)<N \text{ then } 
	          I_4'\text{ grows like }
	            |x|^{N-\alpha(p-1)} \text{ and }
	            |I_4|\le\dfrac{\Gamma_2}{|x|^{\alpha(p-1)+sp}};\\
	          &\bullet \text{Finally, if } \alpha(p-1)=N \text{ we get }
	          |I_4|\le  \Gamma_3\dfrac{\log(|x|)}{|x|^{N+ps}}.
            \end{aligned}	
        \end{equation}
	    Here $\Gamma_1,\Gamma_2,\Gamma_3$ are positive constant
        that depend only on $N,s,p,C_1,\alpha$ and 
        $\|\Upsilon\|_{L^\infty(\mathbb{R}^N)}.$
        
        \medskip
        
        The real difference with the linear case is observed in the estimate
        of $|I_3|$. It follows from the proof of Lemma 3.6 in 
        \cite{KORVENPAA2017} that there is a positive constant $c$
        depending on $N$ and $p$ such that
       \begin{equation}\label{eq:AI3}
      	 \dfrac{|I_3|}c\le\begin{cases}
      			\tau
      			\omega^{p-2}
      			\left(\dfrac{|x|}{2}\right)^{p(1-s)}+
      			\tau^{p-1}
      			\left(\dfrac{|x|}{2}\right)^{p(1-s)}&\text{if } p\ge 2,
      			\\[10pt]
      			\tau^{p-1}\left(\dfrac{|x|}{2}\right)^{p-2+p(1-s)}
      			&\text{if } \dfrac{2}{2-s}< p< 2,\\[10pt]
      			\tau\omega^{p-2}\left(\dfrac{|x|}{2}\right)^{p(1-s)}
      			&\text{if } 1<p\le\dfrac{2}{2-s},\\
      		\end{cases}
       \end{equation}
       where $\tau=\displaystyle\sup\left\{|D^2\Upsilon(y)|\colon
       y\in B_{\frac{|x|}2}(x)\right\}$ and $\omega=\displaystyle
       \sup\left\{|\nabla\Upsilon(y)|\colon
       y\in B_{\frac{|x|}2}(x)\right\}.$
       Since $\dfrac{|x|}{2}\le|y|$ for any $y\in B_{\frac{|x|}2}(x),$
       by \eqref{eq:H} and \eqref{eq:AI3}, we have
       \begin{equation}\label{eq:I3}
       		|I_3|\le \dfrac{C}{|x|^{\alpha(p-1)+N}}
       \end{equation}
      where $C$ is a positive constant depending on $N,s,p,C_2,C_3$
      and $\alpha.$
      
      By \eqref{eq:I1}, \eqref{eq:I2}, \eqref{eq:I4} and \eqref{eq:I3}
      we get \eqref{eq:I}.
      
      \medskip
      
      To end the proof we show \eqref{eq:Ip}
      For $x$ large enough,by \eqref{eq:I1}, \eqref{eq:I2}, and \eqref{eq:I3}
      there is a positive constant $C$ such that 
      \begin{equation}\label{eq:Ip1}
      		(-\Delta_p)^s\Upsilon(x)
      		=I_1+I_2+I_3+I_4\le I_4+\dfrac{C}{|x|^{\alpha(p-1)+ps}}.
      \end{equation}
      
      On the other hand, since $|y|\le\dfrac{|x|}2$ implies that
         $\Upsilon(x)\le\Upsilon(y)$ and $|x-y|<\dfrac32|x|,$ we get
      \begin{align*}
			I_4&=-
			\int_{B_{\frac{|x|}{2}}(0)}
			\dfrac{(\Upsilon(y)-\Upsilon(x))^{p-1}}{|x-y|^{N+ps}}dy\\
			&\le -\left(\dfrac{2}3\right)^{N+ps}\dfrac1{|x|^{N+ps}} 
			\int_{B_{\frac{|x|}{2}}(0)}
			(\Upsilon(y)-\Upsilon(x))^{p-1}dy.   
      \end{align*}
      Now, if $|x|>2$ we have that
      \begin{align*}
			I_4
			&\le -\left(\dfrac{2}3\right)^{N+ps}\dfrac1{|x|^{N+ps}} 
			\int_{B_{\frac{|x|}{2}}(0)}
			(\Upsilon(y)-\Upsilon(x))^{p-1}dy\\    
			&\le -\left(\dfrac{2}3\right)^{N+ps}\dfrac1{|x|^{N+ps}} 
			\int_{B_{1}(0)}
			(\Upsilon(y)-\Upsilon(x))^{p-1}dy\\   
			&\le -\left(\dfrac{2}3\right)^{N+ps}\dfrac1{|x|^{N+ps}} 
			\int_{B_{1}(0)}
			(\Upsilon(y)-\Upsilon(z))^{p-1}dy\\   
      \end{align*}
      for any $z\in\partial B_1(0).$  That is, there is a positive constant
      $C$ such that
      \begin{equation}\label{eq:Ip2}
      		I_4\le -\dfrac{C}{|x|^{N+ps}}\quad\forall |x|>2.
      \end{equation}
      Then by \eqref{eq:Ip1} and \eqref{eq:Ip2}, there is a positive constant
      $C$ such that 
      \[
		(-\Delta_p)^s\Upsilon(x)\le - \dfrac{C}{|x|^{N+sp}}      
      \]
      for all $x$ large enough and $\alpha$ such that $\alpha(p-1)>N.$
      
    \end{proof}

    The other result that play an important role in the proof of 
    decay estimates is the next comparison principle.

    \begin{theorem}[Comparison principle]\label{theorem:comparison}
        Let $\Omega\subset\mathbb{R}^N$ be an open set, 
        $V\in\lir$ $V\ge 0$ in $\Omega$ and  $u,v\in\wspr$ satisfy 
        $u\le v$ in $\mathbb{R}^N\setminus \Omega$ and
        \[
            (-\Delta_p)^s u+ V(x)|u(x)|^{p-2}u(x)\le
            (-\Delta_p)^s v+ V(x)|v(x)|^{p-2}v(x)
	                    \quad \text{ in }\Omega,
        \]
        that is
        \begin{align*}
                   \int_{\mathbb{R}^N}\int_{\mathbb{R}^N}&
                     \dfrac{|u(x)-u(y)|^{p-2}(u(x)-u(y))(\varphi(x)-\varphi(y))}
                     {|x-y|^{N+ps}}\,dxdy\\
                   &\hspace{4cm}+\int_{\mathbb{R}^N}
                   V(x)|u(x)|^{p-2}u(x)\varphi(x) dx\\
                   \le&\int_{\mathbb{R}^N}\int_{\mathbb{R}^N}
                     \dfrac{|v(x)-v(y)|^{p-2}(v(x)-v(y))(\varphi(x)-\varphi(y))}
                     {|x-y|^{N+ps}}\,dxdy\\
                   &\hspace{4cm}+\int_{\mathbb{R}^N}
                   V(x)|v(x)|^{p-2}v(x)\varphi(x) dx
        \end{align*}
        whenever $\varphi\in\twspo,$ $\varphi\ge0.$
        Then $u\le v$ in $\Omega.$
    \end{theorem}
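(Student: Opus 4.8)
The plan is to run the classical argument for nonlocal comparison: test the weak inequality against the positive part of $u-v$. Set $w\coloneqq u-v$ and $\varphi\coloneqq w_+=(u-v)_+$. Since $u\le v$ a.e.\ in $\mathbb{R}^N\setminus\Omega$, the function $\varphi$ vanishes a.e.\ outside $\Omega$, hence it coincides with the zero extension of $\varphi|_\Omega$; moreover, because $t\mapsto t_+$ is $1$-Lipschitz, $\llbracket\varphi\rrbracket_{s,p}\le\llbracket w\rrbracket_{s,p}<\infty$ and $\|\varphi\|_p\le\|w\|_p<\infty$, so $\varphi\in\wspr$ and therefore $\varphi\in\twspo$ with $\varphi\ge0$. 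Thus $\varphi$ is an admissible test function for the hypothesis, and all the integrals produced below are finite by H\"older's inequality (using $u,v,\varphi\in\wspr$ and $V\in\lir$).

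Writing $J_p(t)\coloneqq|t|^{p-2}t$ and subtracting the two weak formulations tested against $\varphi$, I would arrive at
\[
\int_{\mathbb{R}^N}\!\!\int_{\mathbb{R}^N}
\frac{\big(J_p(u(x)-u(y))-J_p(v(x)-v(y))\big)\big(\varphi(x)-\varphi(y)\big)}{|x-y|^{N+ps}}\,dxdy
\le\int_{\mathbb{R}^N}V(x)\big(J_p(v(x))-J_p(u(x))\big)\varphi(x)\,dx .
\]
The right-hand side is $\le0$: on $\{\varphi>0\}=\{u>v\}\subseteq\Omega$ one has $V\ge0$, $\varphi>0$, and strict monotonicity of $J_p$ gives $J_p(v)-J_p(u)<0$. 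The left-hand side is $\ge0$ pointwise: for every pair $(x,y)$ the factors $J_p(u(x)-u(y))-J_p(v(x)-v(y))$ and $\varphi(x)-\varphi(y)$ have the same sign, or the second one is zero. Indeed, since $J_p$ is strictly increasing, $\operatorname{sign}\big(J_p(u(x)-u(y))-J_p(v(x)-v(y))\big)=\operatorname{sign}\big(w(x)-w(y)\big)$, and, since $t\mapsto t_+$ is nondecreasing, $\varphi(x)-\varphi(y)=w_+(x)-w_+(y)$ is either $0$ or carries the sign of $w(x)-w(y)$.

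Consequently both sides vanish and the nonlocal integrand is $0$ for a.e.\ $(x,y)$. From the structure just described, whenever $w(x)\neq w(y)$ the first factor is strictly nonzero, which forces $\varphi(x)=\varphi(y)$; and when $w(x)=w(y)$ one trivially has $\varphi(x)=\varphi(y)$. Hence $\varphi(x)=\varphi(y)$ for a.e.\ $(x,y)\in\mathbb{R}^N\times\mathbb{R}^N$, so $\varphi$ equals a constant a.e.; since $\varphi\in L^p(\mathbb{R}^N)$, that constant is $0$. Therefore $(u-v)_+=0$ a.e., i.e.\ $u\le v$ a.e.\ in $\mathbb{R}^N$, in particular in $\Omega$. (If $|\mathbb{R}^N\setminus\Omega|>0$ one could also conclude the constant is $0$ directly from $\varphi\equiv0$ there, but this is not needed.)

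The routine parts are the H\"older estimates making the integrals meaningful and the monotonicity computation giving the signs. The two points that need genuine care are: (i) verifying that $\varphi=(u-v)_+$ actually belongs to $\twspo$ — which works precisely because $\varphi$ vanishes outside $\Omega$, so no extension-by-zero subtlety arises; and (ii) upgrading "the nonlocal integrand vanishes a.e." to "$(u-v)_+$ is a.e.\ constant", for which one invokes the \emph{strict} monotonicity of $t\mapsto|t|^{p-2}t$ together with a short measure-theoretic argument. These, rather than any hard estimate, are what I expect to be the main obstacles.
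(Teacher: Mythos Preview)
Your proposal is correct and follows essentially the same approach as the paper: both proofs test the weak inequality against $\varphi=(u-v)_+$, first checking that this function lies in $\twspo$ via the Lipschitz bound $|\varphi(x)-\varphi(y)|\le|w(x)-w(y)|$. The only difference is that the paper, after this verification, defers the remainder of the argument to references (Proposition~2.5 in \cite{MR3631323}, Lemma~9 in \cite{LL}, Theorem~2.6 in \cite{MR3461371}), whereas you carry out the monotonicity computation and the ``integrand vanishes $\Rightarrow$ $\varphi$ constant'' step explicitly; your written-out version is a valid instantiation of what those references provide.
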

    
    \begin{proof}
	    Let's start by observing that 
	    $\varphi=(u-v)_+\in\twspo,$ since
	     $u,v\in\wspr,$ and for any $x,y\in\mathbb{R}^N$ we have 
	    \[
	        |\varphi(x)|\le |u(x)-v(x)|,
	    \]    
	    \[
	        |\varphi (x)-\varphi (y)|\le |u(x)-v(x)-(u(y)-v(y))|\le 
	        |u(x)-u(y)|+ |v(x)-v(y)|.
	    \]
	    The proof follows by the argument of \cite[Proposition 2.5]{MR3631323}.
	    See also \cite[Lemma 9]{LL} and \cite[Theorem 2.6]{MR3461371}.
    \end{proof}
    
    First we show the  decay rate at infinity of all positive eigenfunctions 
    associated to $\lambda_1(g)$ in the case $sp\ge N.$

%
    \begin{proof}[Proof of Theorem \ref{theorem:decay1}]
	    We give only the proof of the left hand side of the inequality, 
	    the proof of the right hand side is similar.
	    
	    Let us observe that, by assumptions, we have
	    \begin{equation}\label{eq:decay1}
            0\le
            (-\Delta_p)^s u(x)+ \lambda_1(g)\|g_2\|_\infty|u(x)|^{p-1}
        \end{equation}
        for $|x|$ large enough. 
        
        On the other hand, taking $\alpha=\dfrac{N+ps}{p-1}$ and  
        $\Upsilon\in C^2(\mathbb{R}^N)$ a positive function such that 
        $\Upsilon$ is radially symmetric, decreasing and 
        \[
            \Upsilon(x)=\dfrac{1}{|x|^\alpha}\quad\forall |x|>1,
        \]
        by Lemma \ref{lemma:decay}, we have that there exists $k>>1$ such that 
        for any $|x|>k$
         \begin{equation}\label{eq:decay2}
              (-\Delta_p)^s \Upsilon(x)+ c_1|\Upsilon(x)|^{p-2}
              \Upsilon(x)\le0
        \end{equation}
        for some positive constants $c_1$ and $c_2.$
        
        We next set
        \[
            \phi(x)=K\Upsilon(Rx)
        \]
        where $K$ and $R$ are positive constant that will be selected bellow. 
        Then, by \eqref{eq:decay2},
        \[
            (-\Delta_p)^s \phi(x)=\dfrac{K}{R^{sp}}(-\Delta_p)^s \Upsilon(Rx)
            \le -\dfrac{c_1}{R^{sp}}|\phi(x)|^{p-2}\phi(x) \quad \forall |x|>\dfrac{k}{R}.
        \]
        Taking 
        \[
            R= \left(\dfrac{c_1}{\lambda_1(g)\|g_2\|_{\infty}}\right)^{\frac1{sp}}
             \quad\text{ and }\quad k_1=\dfrac{k}{R},
       \]
       we have
       \begin{equation}\label{eq:decay3}
            (-\Delta_p)^s \phi(x)+ \lambda_1(g)\|g_2\|_\infty|\phi(x)|^{p-2}\phi(x)\le 0
            \quad \forall |x|>k_1.
       \end{equation}
       
       Notice that $\phi(x)$ is classical solution and then a strong solution and weak solution, for details see \cite{MR3593528}.

       On the other hand,  
       by Lemmas \ref{lemma:PositiveFE3}, \ref{lemma:regularidad1} and
       \ref{lemma:regularidad2}, we can choose $K>0$ so that 
       $u(x)\ge\phi(x)$ in $|x|<k_1.$ 
        
       Finally, by Theorem \ref{theorem:comparison}, we get 
       $u(x)\ge\phi(x)$ in $\mathbb{R}^N.$ Therefore
       \[
            \dfrac{K}{R^{\frac{N+sp}{p-1}}}|x|^{-\frac{N+sp}{p-1}}\le u(x)
       \]
       for all $|x|>k_1.$
    \end{proof}
    
    Notice that in the case $sp<N$ one side bound can be obtained but the other is not posible 
    since the assumption $g(x)<-\delta<0$ for $|x|$ large enough is not compatible with the assumptions of the existence results.

    Lastly, we study the decay rate at infinity of all positive ground state 
    solutions of the next autonomous Schr\"odinger equations
    \begin{equation}\label{eq:asc1}
	    \begin{cases}
            (-\Delta_p)^su(x)+\mu|u|^{p-2}u=f(u) &\text{in }\mathbb{R}^N,\\
            u\in\wspr\\
            u(x)>0 &\text{for all }x\in\mathbb{R}^N, \mu>0.	    
	    \end{cases}
    \end{equation} 
    
    The existence of at least one positive ground state solution of \eqref{eq:asc1} 
    was recently proved in \cite{Ambrosio2018} under the following assumptions:
    $sp<N$ and the nonlinearity $f\colon\mathbb{R}\to\mathbb{R}$ satisfies the next 
    conditions
    \begin{enumerate}
	    \item[($f_1$)] $f\in C(\mathbb{R})$ and $f(t)=0$ for all $t<0;$ 
	    \item[($f_2$)] $\lim\limits_{|t|\to 0}\dfrac{|f(t)|}{|t|^{p-1}}=0;$
	    \item[($f_3$)] there is $q\in(p,p_s^*)$ such that
	    \[ 
	        \lim\limits_{|t|\to \infty}\dfrac{|f(t)|}{|t|^{q-1}}=0;
	    \]
	    \item[($f_4$)] there is $\vartheta>p$ such that
	    \[
	        0<\vartheta \int_0^t f(\tau) d\tau\le t f(t) \quad\text{ for all } t>0;
	    \]
	    \item[($f_5$)] the map $t\to\dfrac{f(t)}{t^{p-1}}$ is increasing in 
	    $(0,+\infty).$
    \end{enumerate}
    
    In addition, in \cite[Remark 3.2]{Ambrosio2018}, the authors observe that
    if $v$ is a positive ground state solutions of \eqref{eq:asc1} then 
    $v\in \lir\cap C(\mathbb{R}^N).$ 
    In fact, by \cite[Corollary 5.5]{MR3593528},
    we can conclude that $v\in C^{\gamma}(\mathbb{R}^N)$ for some 
    $\gamma\in(0,1).$ Therefore
    \begin{equation}\label{eq:aes}
	    v(x)\to 0\quad\text{ as } |x|\to0
    \end{equation}
    since $v\in \lpr\cap\lir\cap C^\gamma(\mathbb{R}^N).$ 
    
    Our last result shows the rate decay of $v$ at infinity.
    
    
    \begin{proof}[Proof of Theorem \ref{theorem:decay2}]
	    Observe that, by \eqref{eq:aes} and $(f_2)$ there is a $k>1$ such that
	    \[
	        f(v(x))<\dfrac{\mu}2|v(x)|^{p-2}v(x) \quad \forall |x|>k.
	    \]
	    Then
	    \[
            (-\Delta_p)^s v(x)+ \dfrac\mu2|v(x)|^{p-2}v(x) \le0\le
            (-\Delta_p)^s v(x)+ \mu|v(x)|^{p-2}v(x)
        \]
        for $|x|$ large enough.
        
        The remain of the proof is entirely analogous to the proof of Theorem
        \ref{theorem:decay1}.
    \end{proof}

{\bf Acknowledgements.} L. D. P. was partially supported by PICT2012 0153 from ANPCyT 
	(Argentina). A. Q. was partially supported by Fondecyt Grant No. 1151180 and Programa Basal, CMM. U. de Chile

\bibliographystyle{abbrv}
\bibliography{Biblio}

\def\cprime{$'$}
\begin{thebibliography}{10}

\bibitem{Adams}
R.~A. Adams.
\newblock {\em Sobolev spaces}.
\newblock Academic Press [A subsidiary of Harcourt Brace Jovanovich,
  Publishers], New York-London, 1975.
\newblock Pure and Applied Mathematics, Vol. 65.

\bibitem{MR1098396}
W.~Allegretto.
\newblock Principal eigenvalues for indefinite-weight elliptic problems in
  {${\bf R}^n$}.
\newblock {\em Proc. Amer. Math. Soc.}, 116(3):701--706, 1992.

\bibitem{MR1356326}
W.~Allegretto and Y.~X. Huang.
\newblock Eigenvalues of the indefinite-weight {$p$}-{L}aplacian in weighted
  spaces.
\newblock {\em Funkcial. Ekvac.}, 38(2):233--242, 1995.

\bibitem{Ambrosio2018}
V.~Ambrosio and T.~Isernia.
\newblock Multiplicity and concentration results for some nonlinear
  {S}chr\"{o}dinger equations with the fractional {$p$}-{L}aplacian.
\newblock {\em Discrete Contin. Dyn. Syst.}, 38(11):5835--5881, 2018.

\bibitem{Amghibech}
S.~Amghibech.
\newblock On the discrete version of {P}icone's identity.
\newblock {\em Discrete Appl. Math.}, 156(1):1--10, 2008.

\bibitem{MR2471902}
F.~Andreu, J.~M. Maz\'{o}n, J.~D. Rossi, and J.~Toledo.
\newblock A nonlocal {$p$}-{L}aplacian evolution equation with nonhomogeneous
  {D}irichlet boundary conditions.
\newblock {\em SIAM J. Math. Anal.}, 40(5):1815--1851, 2008/09.

\bibitem{BCF}
C.~Bjorland, L.~Caffarelli, and A.~Figalli.
\newblock Non-local gradient dependent operators.
\newblock {\em Adv. Math.}, 230(4-6):1859--1894, 2012.

\bibitem{MR3122168}
M.~Bonforte and J.~L. V\'{a}zquez.
\newblock Quantitative local and global a priori estimates for fractional
  nonlinear diffusion equations.
\newblock {\em Adv. Math.}, 250:242--284, 2014.

\bibitem{MR3461371}
L.~Brasco, S.~Mosconi, and M.~Squassina.
\newblock Optimal decay of extremals for the fractional {S}obolev inequality.
\newblock {\em Calc. Var. Partial Differential Equations}, 55(2):Art. 23, 32,
  2016.

\bibitem{MR3552458}
L.~Brasco and E.~Parini.
\newblock The second eigenvalue of the fractional {$p$}-{L}aplacian.
\newblock {\em Adv. Calc. Var.}, 9(4):323--355, 2016.

\bibitem{MR3411543}
L.~Brasco, E.~Parini, and M.~Squassina.
\newblock Stability of variational eigenvalues for the fractional
  {$p$}-{L}aplacian.
\newblock {\em Discrete Contin. Dyn. Syst.}, 36(4):1813--1845, 2016.

\bibitem{MR0269962}
F.~E. Browder.
\newblock Existence theorems for nonlinear partial differential equations.
\newblock In {\em Global {A}nalysis ({P}roc. {S}ympos. {P}ure {M}ath., {V}ol.
  {XVI}, {B}erkeley, {C}alif., 1968)}, pages 1--60. Amer. Math. Soc.,
  Providence, R.I., 1970.

\bibitem{MR1007489}
K.~J. Brown, C.~Cosner, and J.~Fleckinger.
\newblock Principal eigenvalues for problems with indefinite weight function on
  {${\bf R}^n$}.
\newblock {\em Proc. Amer. Math. Soc.}, 109(1):147--155, 1990.

\bibitem{MR1412438}
K.~J. Brown and N.~Stavrakakis.
\newblock Global bifurcation results for a semilinear elliptic equation on all
  of {$\bold R^N$}.
\newblock {\em Duke Math. J.}, 85(1):77--94, 1996.

\bibitem{Caffarelli2012}
L.~Caffarelli.
\newblock {\em Non-local Diffusions, Drifts and Games}, pages 37--52.
\newblock Springer Berlin Heidelberg, Berlin, Heidelberg, 2012.

\bibitem{MR3556755}
L.~M. Del~Pezzo and A.~Quaas.
\newblock Global bifurcation for fractional {$p$}-{L}aplacian and an
  application.
\newblock {\em Z. Anal. Anwend.}, 35(4):411--447, 2016.

\bibitem{MR3631323}
L.~M. Del~Pezzo and A.~Quaas.
\newblock A {H}opf's lemma and a strong minimum principle for the fractional
  {$p$}-{L}aplacian.
\newblock {\em J. Differential Equations}, 263(1):765--778, 2017.

\bibitem{DD}
F.~Demengel and G.~Demengel.
\newblock {\em Functional spaces for the theory of elliptic partial
  differential equations}.
\newblock Universitext. Springer, London, 2012.
\newblock Translated from the 2007 French original by Reinie Ern{\'e}.

\bibitem{DNPV}
E.~Di~Nezza, G.~Palatucci, and E.~Valdinoci.
\newblock Hitchhiker's guide to the fractional {S}obolev spaces.
\newblock {\em Bull. Sci. Math.}, 136(5):521--573, 2012.

\bibitem{MR3635980}
S.~Dipierro, M.~Medina, I.~Peral, and E.~Valdinoci.
\newblock Bifurcation results for a fractional elliptic equation with critical
  exponent in {$\Bbb{R}^n$}.
\newblock {\em Manuscripta Math.}, 153(1-2):183--230, 2017.

\bibitem{MR1336957}
P.~Dr\'{a}bek.
\newblock Nonlinear eigenvalue problem for {$p$}-{L}aplacian in {$\bold R^N$}.
\newblock {\em Math. Nachr.}, 173:131--139, 1995.

\bibitem{MR1390979}
P.~Dr\'{a}bek and Y.~X. Huang.
\newblock Bifurcation problems for the {$p$}-{L}aplacian in {${\bf R}^N$}.
\newblock {\em Trans. Amer. Math. Soc.}, 349(1):171--188, 1997.

\bibitem{MR3002595}
P.~Felmer, A.~Quaas, and J.~Tan.
\newblock Positive solutions of the nonlinear {S}chr\"{o}dinger equation with
  the fractional {L}aplacian.
\newblock {\em Proc. Roy. Soc. Edinburgh Sect. A}, 142(6):1237--1262, 2012.

\bibitem{FP}
G.~Franzina and G.~Palatucci.
\newblock Fractional {$p$}-eigenvalues.
\newblock {\em Riv. Math. Univ. Parma (N.S.)}, 5(2):373--386, 2014.

\bibitem{Grisvard}
P.~Grisvard.
\newblock {\em Elliptic problems in nonsmooth domains}, volume~24 of {\em
  Monographs and Studies in Mathematics}.
\newblock Pitman (Advanced Publishing Program), Boston, MA, 1985.

\bibitem{MR1364493}
Y.~X. Huang.
\newblock Eigenvalues of the {$p$}-{L}aplacian in {$\bold R^N$} with indefinite
  weight.
\newblock {\em Comment. Math. Univ. Carolin.}, 36(3):519--527, 1995.

\bibitem{MR3593528}
A.~Iannizzotto, S.~Mosconi, and M.~Squassina.
\newblock Global {H}\"older regularity for the fractional {$p$}-{L}aplacian.
\newblock {\em Rev. Mat. Iberoam.}, 32(4):1353--1392, 2016.

\bibitem{IN}
H.~Ishii and G.~Nakamura.
\newblock A class of integral equations and approximation of {$p$}-{L}aplace
  equations.
\newblock {\em Calc. Var. Partial Differential Equations}, 37(3-4):485--522,
  2010.

\bibitem{KORVENPAA2017}
J.~Korvenp{\"a}{\"a}, T.~Kuusi, and E.~Lindgren.
\newblock Equivalence of solutions to fractional p-laplace type equations.
\newblock {\em Journal de Math\'ematiques Pures et Appliqu\'ees}, 2017.

\bibitem{MR1101219}
H.~Kozono and H.~Sohr.
\newblock New a priori estimates for the {S}tokes equations in exterior
  domains.
\newblock {\em Indiana Univ. Math. J.}, 40(1):1--27, 1991.

\bibitem{LL}
E.~Lindgren and P.~Lindqvist.
\newblock Fractional eigenvalues.
\newblock {\em Calc. Var. Partial Differential Equations}, 49(1-2):795--826,
  2014.

\bibitem{Mazya}
V.~Maz'ya.
\newblock {\em Sobolev spaces with applications to elliptic partial
  differential equations}, volume 342 of {\em Grundlehren der Mathematischen
  Wissenschaften [Fundamental Principles of Mathematical Sciences]}.
\newblock Springer, Heidelberg, augmented edition, 2011.

\bibitem{MR3089742}
R.~Servadei.
\newblock The {Y}amabe equation in a non-local setting.
\newblock {\em Adv. Nonlinear Anal.}, 2(3):235--270, 2013.

\bibitem{MR3002745}
R.~Servadei and E.~Valdinoci.
\newblock Variational methods for non-local operators of elliptic type.
\newblock {\em Discrete Contin. Dyn. Syst.}, 33(5):2105--2137, 2013.

\bibitem{MR578914}
E.~Zeidler.
\newblock Lectures on {L}yusternik-{S}chnirelman theory for indefinite
  nonlinear eigenvalue problems and its applications.
\newblock In {\em Nonlinear analysis, function spaces and applications ({P}roc.
  {S}pring {S}chool, {H}orni {B}radlo, 1978)}, pages 176--219. Teubner,
  Leipzig, 1979.

\bibitem{MR768749}
E.~Zeidler.
\newblock {\em Nonlinear functional analysis and its applications. {III}}.
\newblock Springer-Verlag, New York, 1985.
\newblock Variational methods and optimization, Translated from the German by
  Leo F. Boron.

\end{thebibliography}

\end{document}